\documentclass{article}

\usepackage{a4}
\usepackage{psfig}
\usepackage{fancyhdr}
\usepackage{latexsym}
\usepackage{amssymb}
\usepackage{mathtools}

\usepackage{abstract}

\usepackage[subnum]{cases}

\usepackage{amstext}
\usepackage{amsfonts}
\usepackage{mathrsfs}
\usepackage{multirow}
\usepackage{multicol}
\usepackage{stackrel}

\usepackage{tikz}

\usepackage[square]{natbib}
\setcitestyle{numbers}

\NeedsTeXFormat{LaTeX2e}
\usepackage[dvips]{epsfig}
\usepackage[latin1]{inputenc}

\usepackage{csquotes}

\usepackage{float}
\usepackage{verbatim}
\usepackage{longtable}
\usepackage{tabularx}       % Fuer Tabellen laenger als eine Seite
\usepackage{multicol}
\usepackage{color}
\usepackage{wrapfig}
\usepackage{floatflt}
\usepackage{afterpage}

\usepackage{textcomp}
\usepackage[gen]{eurosym}

\usepackage{amsmath}
\usepackage{amssymb}
\usepackage{amstext}
\usepackage{mathrsfs}
\usepackage{amsfonts}
\usepackage{bbm}

\usepackage{graphicx}
\usepackage{epsfig}
\usepackage{subfigure}

\usepackage{nomencl}
\usepackage{epigraph}
\setlength{\epigraphwidth}{12cm}
\makenomenclature

\usepackage{url}
\usepackage{appendix}

\usepackage{amsthm}
\usepackage{yfonts}

\newcommand{\F}{\mathbb{F}}

\newcommand{\R}{\mathbb{R}}

\newcommand{\N}{\mathbb{N}}
\newcommand{\PP}{\mathbb{P}}
\newcommand{\EE}{\mathbb{E}}
\newcommand{\dd}{\mathrm{d}}

\DeclareMathOperator*{\argmin}{arg\,min}
\DeclareMathOperator*{\arginf}{arg\,inf}
\newcommand\restr[2]{{% we make the whole thing an ordinary symbol
  \left.\kern-\nulldelimiterspace % automatically resize the bar with \right
  #1 % the function
  \vphantom{\big|} % pretend it's a little taller at normal size
  \right|_{#2} % this is the delimiter
  }}

\newcommand{\indicator}[1]{\mathbbm{1}_{\left\{ {#1} \right\} }}
\newcommand{\E}[1]{\mathbb{E}\left[ {#1} \right] }
\newcommand{\Prob}[1]{\PP \left[ {#1} \right] }

\newcommand{\pp}[1]{\left( {#1} \right)^+ }

\theoremstyle{plain}
\newtheorem{Theorem}{Theorem}[section]
\newtheorem{Proposition}[Theorem]{Proposition}%[section]
\newtheorem{Lemma}[Theorem]{Lemma}%[section]
%[section]

%\theoremstyle{definition}
\newtheorem{Definition}[Theorem]{Definition}%[section]
\newtheorem{Assumption}{Assumption}%[section]
\newtheorem{Example}[Theorem]{Example}%[section]

\theoremstyle{remark}
\newtheorem{Remark}[Theorem]{Remark}%[section]

\numberwithin{equation}{section}
\numberwithin{figure}{section}

\setcounter{secnumdepth}{5}

%% PDF-Optionen -------------------------------------------------------------
\usepackage[
bookmarks,
bookmarksopen=true,
pdftitle={},
pdfauthor={Jan Obloj, Peter Spoida},
pdfcreator={Jan Obloj, Peter Spoida},
pdfsubject={},
pdfkeywords={},
colorlinks=false,
%%linkcolor=red, % einfache interne Verknpfungen
anchorcolor=black,% Ankertext
%%citecolor=blue, % Verweise auf Literaturverzeichniseintrge im Text
filecolor=magenta, % Verknpfungen, die lokale Dateien ffnen
menucolor=red, % Acrobat-Menpunkte
%%urlcolor=cyan, 
linkcolor=black, % einfache interne Verknpfungen
%anchorcolor=black,% Ankertext
%citecolor=black, % Verweise auf Literaturverzeichniseintrge im Text
%filecolor=black, % Verknpfungen, die lokale Dateien ffnen
%menucolor=black, % Acrobat-Menpunkte
%urlcolor=black, 
%backref,
%pagebackref,
plainpages=false,% zur korrekten Erstellung der Bookmarks
pdfpagelabels,% zur korrekten Erstellung der Bookmarks
hypertexnames=false,% zur korrekten Erstellung der Bookmarks
linktocpage % Seitenzahlen anstatt Text im Inhaltsverzeichnis verlinken
]{hyperref}

\title{An Iterated Az\'{e}ma-Yor Type Embedding for Finitely Many Marginals}
%\title{An Iterated Az\'{e}ma-Yor Type Embedding for Given $n$ Marginals}

\author{Jan Ob\l{}\'{o}j \thanks{Jan Ob\l\'oj is thankful to the Oxford-Man Institute of Quantitative Finance and St John's College in Oxford for their support.  \newline
\href{mailto:jan.obloj@maths.ox.ac.uk; web}{jan.obloj@maths.ox.ac.uk}; \url{http://www.maths.ox.ac.uk/people/profiles/jan.obloj}} \hspace{3mm} and \hspace{3mm} Peter Spoida\thanks{Peter Spoida gratefully acknowledges scholarships from the Oxford-Man Institute of Quantitative Finance and the DAAD. \newline
\href{mailto:peter.spoida@maths.ox.ac.uk}{peter.spoida@maths.ox.ac.uk}; \url{http://www.maths.ox.ac.uk/people/profiles/peter.spoida}} \medskip\\
University of Oxford\thanks{Mathematical Institute, ROQ, Woodstock Rd, Oxford OX2 6GG, UK}
%Oxford-Man Institute of Quantitative Finance, \\
}

\date{\today}

\begin{document}

\maketitle
%\tableofcontents

\begin{abstract}
 
We solve the $n$-marginal Skorokhod embedding problem for a continuous local martingale and a sequence of probability measures $\mu_1,\dots,\mu_n$ which are in convex order and satisfy an additional technical assumption. Our construction is explicit and is a multiple marginal generalisation of the \citet{AzemaYor1} solution. In particular, we recover the stopping boundaries obtained by \citet{Brown98themaximum} and \citet{MadanYor}. 
Our technical assumption is necessary for the explicit embedding, as demonstrated with a counterexample. We discuss extensions to the general case giving details when $n=3$.

In our analysis we compute the law of the maximum at each of the $n$ stopping times.
This is used in \citet{Touzi_maxmax} to show that the construction maximises the distribution of the maximum among all solutions to the $n$-marginal Skorokhod embedding problem. The result has direct implications for robust pricing and hedging of Lookback options.% given prices of call options with multiple intermediate maturities.
\end{abstract}
%\textbf{Keywords}: Skorokhod embedding problem, Az\'ema-Yor embedding, maximum process,
%\smallskip\\
\textbf{Mathematics Subject Classification (2010)}: 60G40, 60G44

\newpage

\section{Introduction}

We consider here an $n$-marginal Skorokhod embedding problem (SEP). We construct an explicit solution which has desirable optimal properties. % in that it maximises the distribution of the maximum among all the embeddings.
The classical (one-marginal) SEP consists in finding a stopping time $\tau$ such that a given stochastic process $(X_t)$ stopped at $\tau$ has a given distribution $\mu$. For the solution to be useful (and non-trivial) one further requires $\tau$ to be \emph{minimal} (cf.\ \citet[Sec.~8]{Obloj:04b}). When $X$ is a continuous local martingale and $\mu$ is centred in $X_0$, this is equivalent to $(X_{t\land \tau}:t \geq 0)$ being a uniformly integrable martingale.
The problem, dating back to the original work in \citet{Skorokhod:65}, has been an active field of research for nearly 50 years. New solutions often either considered new classes of processes $X$ or focused on finding stopping times $\tau$ with additional optimal properties. This paper contributes to the latter category. We are motivated, as was the case for several earlier works in the field, by questions arising in mathematical finance which we highlight below.
\medskip\\
\textbf{The problem and main results}. To describe the problem we consider, take a standard Brownian motion $B$ and a sequence of probability measures $\mu_1,\ldots,\mu_n$. A solution to the $n$-marginal SEP is a sequence of stopping times $\tau_1 \leq \dots \leq \tau_n$ such that $B_{\tau_i} \sim \mu_i$, $1\leq i\leq n$, and $\left( B_{t \wedge \tau_n} \right)_{t \geq 0}$ is a uniformly integrable martingale. 
It follows from Jensen's inequality that a solution may exist only if all $\mu_i$ are centred and the sequence is in convex order. And then it is easy to see how to solve the problem: it suffices to iterate a solution to the classical case $n=1$ developed for a non-trivial initial distribution of $B_0$, of which several exist. 

In contrast, the question of optimality is much more involved.
In general there is no guarantee that a simple iteration of optimal embeddings would be globally optimal. Indeed, this is usually not the case. Consider the embedding of \citet{AzemaYor1} which consists of a first exit time for the joint process $(B_t, \bar{B}_t)_{t \geq 0}$, where $\bar{B}_t=\sup_{s\leq t} B_s$. More precisely, their solution $\tau^{\mathrm{AY}} = \inf\left\{ t \geq 0: B_t \leq \xi_{\mu}(\bar{B}_t) \right\}$ leads to a functional relation $B_{\tau^{\mathrm{AY}}} = \xi_{\mu}(\bar{B}_{\tau^{\mathrm{AY}}})$. This then translates into the optimal property that the distribution of $\bar{B}_{\tau^{\mathrm{AY}}}$ is maximized in stochastic order amongst all solutions to SEP for $\mu$, i.e. for all $y$,
\begin{align*}
\PP\left[ \bar{B}_{\tau^{\mathrm{AY}}} \geq y \right]  = \sup\left\{\PP\left[ \bar{B}_{\rho} \geq y \right]:\rho \textrm{ s.t. }B_{\rho}\sim \mu, (B_{t\land \rho}) \textrm{ is UI }\right\}.
\end{align*} 
It is not hard to generalise the Az\'ema-Yor embedding to a non-tirivial starting law, see \citet[Sec.~5]{Obloj:04b}. Consequently we can find $\eta_i$ such that $\tau_{i} = \inf\left\{ t \geq \tau_{i-1}: B_t \leq \eta_i(\sup_{\tau_{i-1}\leq s\leq t}B_s) \right\}$ solve the $n$-marginal SEP. However this construction will maximise stochastically the distributions of $\sup_{\tau_{i-1}\leq t\leq \tau_i} B_t$, for each $1\leq i\leq n$, but not of the global maximum $\bar{B}_{\tau_n}$. The latter is achieved with a new solution which we develop here.

Our construction involves an interplay between all $n$-marginals and hence is not an iteration of a one-marginal solution. However it preserves the spirit of the Az\'{e}ma-Yor embedding in the following sense. Each $\tau_{i}$ is still a first exit for $(B_{t}, \bar{B}_{t})_{t \geq \tau_{i-1}}$ which is designed in such a way as to obtain a \enquote{strong relation} between $B_{\tau_{i}}$ and $\bar{B}_{\tau_{i}}$, ideally a functional relation. 
Under our technical assumption about the measures $\mu_1, \dots,\mu_n$, Assumption \ref{ass:unicity_minimizers}, we describe this relation in detail in Lemma \ref{lem:Key_Properties_of_the_Stopping_Rule}.
% which is key for the pathwise proof of the optimal properties of our embedding in Section 4 in \citet{Touzi_maxmax}.

For $n=2$ we recover the results of \citet{MAFI:MAFI116}. We also recover the trivial case $\tau_i=\tau^{AY}_{\mu_i}$ which happens when $\xi_{\mu_i} \leq \xi_{\mu_{i+1}}$, we refer to \citet{MadanYor} who in particular then investigate properties of the arising time-changed process. 
However, as a counterexample shows, our construction does not work for all laws $\mu_1, \dots,\mu_n$ which are in convex order. Assumption \ref{ass:unicity_minimizers} fails when a special interdependence between the marginals is present and
 the analysis then becomes more technical and the resulting quantities are, in a way, less explicit. We only detail the appropriate arguments for the case $n=3$.

We stress that the problem considered in this paper is significantly more complex that the special case $n=1$. For $n=1$ several solutions to SEP exist with different optimal properties. For $n=2$ only one such construction, the generalisation of the Az\'ema--Yor embedding obtained by \citet{Brown98themaximum}, seems to be known. To the best of our knowledge, the solution we present here is the first one to deal with the general $n$-marginal SEP. 
\medskip\\
\textbf{Motivation and applications}. 
Our results have direct implications for, and were motivated by, robust pricing and hedging of lookback options. In mathematical finance, one models the price process $S$ as a martingale and specifying prices of call options at maturity $T$ is equivalent to fixing the distribution $\mu$ of $S_T$. Understanding no-arbitrage price bounds for a functional $O$, which time-changes appropriately, is then equivalent to finding the range of $\E{O(B)_\tau}$ among all solutions to the Skorokhod embedding problem for $\mu$. This link between SEP and robust pricing and hedging was pioneered by \citet{Hobson:98b} who considered Lookback options. Barrier options were subsequently dealt with by \citet{MAFI:MAFI116}. More recently, \citet{CoxObloj:08, CoxObloj:09} considered the case of double touch/no-touch barrier options, \citet{HobsonNeuberger:12} looked at forward starting straddles and analysis for variance options was undertaken by \citet{CoxWang:11}. We refer to \citet{Hobson2010survey} and \citet{OblojEQF:10} for an exposition of the main ideas and more references. However, all the previous works considered essentially the case of call options with one maturity, i.e.\ a one-marginal SEP, while in practice prices for many intermediate maturities may also be available. This motivated our investigation.

We started our quest for a general $n$-marginal optimal embedding by computing the value function $\sup \E{\phi(\sup_{t\leq \tau_n} B_t)}$ among all solutions to the $n$-marginal SEP. This was achieved using stochastic control methods, developed first for $n=1$ by \citet{Touzi11}, and is reported in a companion paper by \citet{Touzi_maxmax}. Knowing the value function we could start guessing the form of the optimiser and this led to the present paper. Consequently the optimal properties of our embedding, namely that it indeed achieves the value function in question, are shown by \citet{Touzi_maxmax}. In fact we give two proofs in that paper, one via stochastic control methods and another one by constructing appropriate pathwise inequalities and exploiting the key Lemma \ref{lem:Key_Properties_of_the_Stopping_Rule} below, cf. \citet[Section 4]{Touzi_maxmax}.
\medskip\\
\textbf{Organisation of the paper}. 
The remainder of the paper is organized as follows.
In Section \ref{sec:Main Assumption and Definitions} we explain the main quantities for the embedding and state the main result. We also present the restriction on the measures $\mu_1,\dots,\mu_n$ which we require for our construction to work (Assumption \ref{ass:unicity_minimizers}). 
In Section \ref{sec:iAY} we prove the main result and Section \ref{sec:Extensions} provides a discussion of extensions together with comments on Assumption \ref{ass:unicity_minimizers}.
The proof of an important but technical lemma is relegated to the Appendix.

\section{Main Result}
\label{sec:Main Assumption and Definitions}

Let $\left( \Omega, \mathcal{F}, \F, \PP \right)$, where $\F=(\mathcal{F}_t)$, be a filtered probability space satisfying the usual hypothesis and $B$ a continuous $\F$--local martingle, $B_0=0$, 
$\langle B \rangle_{\infty} = \infty$ a.s.\ and $B$ has no intevals of constancy a.s.
We denote $\bar{B}_t:=\sup_{s \leq t}B_t$. We are primarily interested in the case when $B$ is a standard Brownian motion and it is convenient to keep this example in mind, hence the notation. We allow for more generality as this introduces no changes to the statements or the proofs.

\subsection{Definitions}
 
The following definition will be crucial in the remainder of the article. We define the stopping boundaries $\xi_1,\dots,\xi_n$ for our iterated Az\'{e}ma-Yor type embedding together with quantities $K_1,\dots,K_n$ which will be later linked to the law of the maximum at subsequent stopping times.

\begin{Definition}
\label{def:indices}
Fix $n \in \N$. For convenience we set  
\begin{equation}
\begin{split}
&c_0\equiv 0, \quad K_0 \equiv 0, \quad \xi_{0} \equiv -\infty. % \quad \xi_{n+1}(y) \quad \text{for $y \geq 0$}.
\end{split}
\label{eq:conventions}
\end{equation}

For $\zeta \in \R$ and $i=1,\dots,n$ we write 
\begin{align}
c_i(\zeta) := \int_{\R}{\left(x-\zeta\right)^+\mu_i(\dd x)}.
\label{eq:definition_call_price}
\end{align}
 
Let $y \geq 0$ and assume that for $i=1,\dots,n-1$ the quantities $\xi_i, K_i, \imath_i$ and $ \jmath_i$ are already defined. Then we define
\begin{equation}
\begin{split}
&\imath_n(\cdot;y): (-\infty,y] \to \left\{ 0,1,\dots,n-1 \right\}, \\ 
&\zeta \mapsto \imath_n(\zeta;y) := \max \left\{k \in \{ 0,1,\dots,n-1 \}: \xi_{k}(y) < \zeta \right\},
\end{split}
\label{eq:definition_imath}
\end{equation} 
and
\begin{align}
\xi_n(y) := \sup \left\{ \arginf_{\zeta < y} \left(  \frac{c_n(\zeta)}{y-\zeta} - \left[ \frac{c_{\imath_n(\zeta;y)}(\zeta)}{y-\zeta} - K_{\imath_n(\zeta;y)}(y) \right]   \right) \right\}.
\label{eq:definition_xi_n}
\end{align}

With
\begin{align}
\jmath_n(y) := \imath_n(\xi_n(y);y)
\end{align}
we set
\begin{align}
K_n(y):=\frac{1}{y-\xi_n(y)} \left\{ \vphantom{\frac{1}{y-\xi_n(y)}} c_n(\xi_n(y)) - \left[ c_{\jmath_n(y)}(\xi_n(y)) - (y-\xi_n(y)) K_{\jmath_n(y)}(y) \right] \right\}.
\label{eq:definition_Kn}
\end{align}
\end{Definition}

\begin{Definition}[Embedding]
\label{def:iterated_AY}
Set $\tau_0 \equiv 0$ and for $i=1,\dots,n$ define  
\begin{numcases}{\tau_i:=}
\inf\left\{ t \geq \tau_{i-1}:B_t \leq \xi_{i}(\bar{B}_t) \right\} & \text{if $B_{\tau_{i-1}}> \xi_{i}(\bar{B}_{\tau_{i-1}}),$} \label{eq:embedding1} \\
\tau_{i-1} & \text{else.}
\label{eq:embedding3}
\end{numcases} 
\end{Definition}

\begin{figure}[h!]
\label{fig:boundaries_illustration}
	\centering
		\includegraphics[scale=0.85]{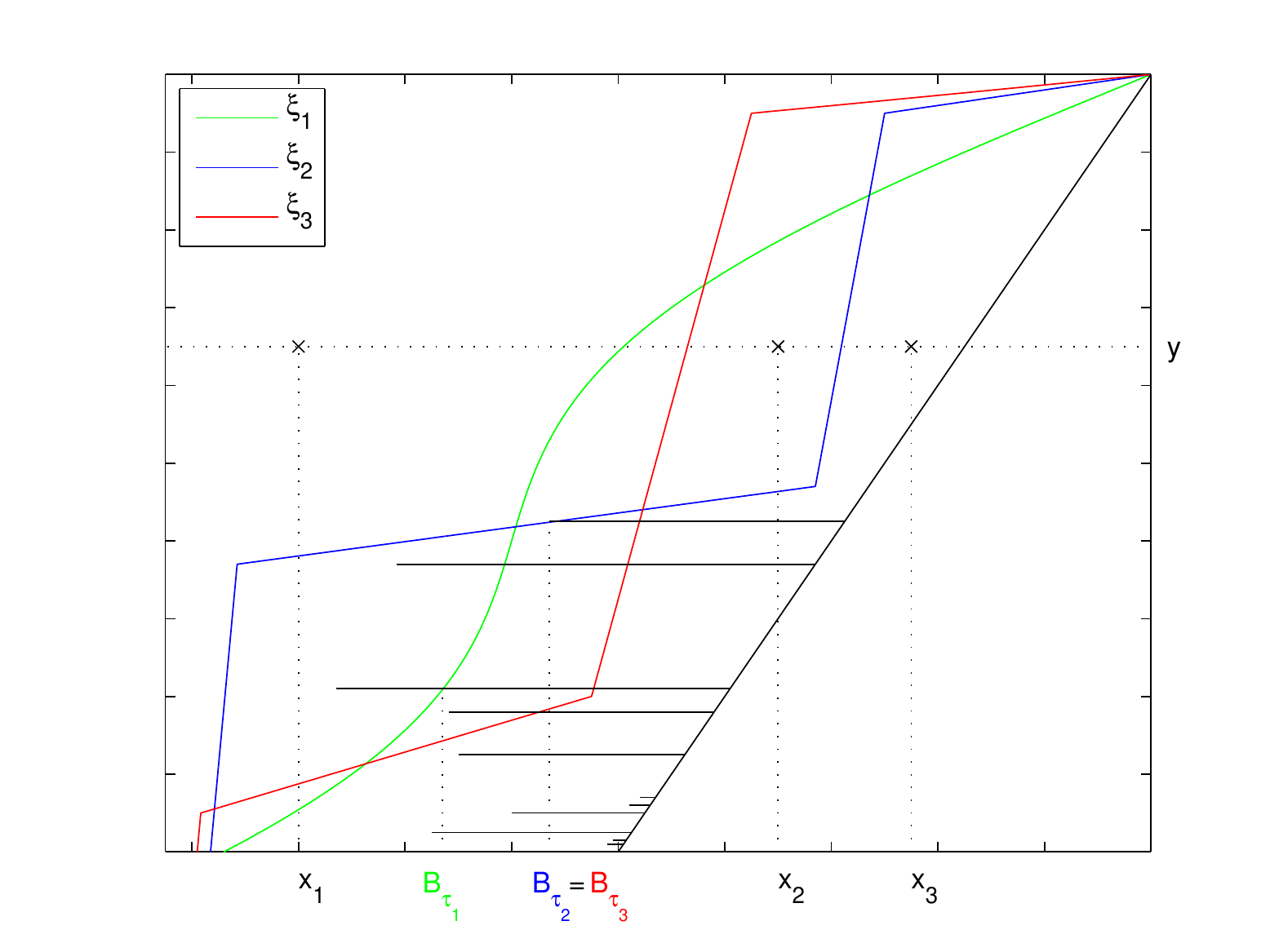}
	\caption{We illustrate possible stopping boundaries $\xi_1,\xi_2,\xi_3$. The horizontal lines represent a sample path of the process $\left( B_t, \bar{B}_t\right)$ where the $x$-axis is the value of $B$ and the $y$-axis the value of $\bar{B}$. 
Each horizontal segment is an excursion of $B$ away from its maximum $\bar{B}$.
According to the definition of the embedding, the first stopping time $\tau_1$ is found when the process first hits $\xi_1$. 
Since $\xi_1(\bar{B}_{\tau_1}) > \xi_2(\bar{B}_{\tau_1})$ the process continues and targets $\xi_2$. The stopping time $\tau_2$ is found when the process first hits $\xi_2$. Since $\xi_2(\bar{B}_{\tau_2}) \leq \xi_3(\bar{B}_{\tau_2})$ we get $\tau_3 = \tau_2$. For the $y$ we fixed we have $\imath_3(x_1;y) = 0,\imath_3(x_2,y) = 1,\imath_3(x_3;y) = 2$.}
\end{figure}

\begin{Remark}[Properties of $\imath_n$]
Recalling the definition of $\imath_n$, cf. \eqref{eq:definition_imath}, we observe for later use that for $y \geq 0$
\begin{align}
\text{$\imath_n(\cdot;y)$ is left-continuous and has at most $n-1$ jumps}
\label{eq:left_continuity_imath}
\end{align}
and for $x \in \R$
\begin{align}
\text{$\imath_n(x;\cdot)$ is right-continuous and has at most $n-1$ jumps.}
\label{eq:right_continuity_imath}
\end{align}
\end{Remark}

Figure \ref{fig:boundaries_illustration} illustrates a set of possible stopping boundaries $\xi_1,\xi_2,\xi_3$ in the case of $n=3$. If Assumption \ref{ass:unicity_minimizers} is in place, see Section \ref{subsec:restriction}, we will show that the stopping boundaries are continuous (except possibly for $i=1$) and non-decreasing, cf. Section \ref{subsec:Basic Properties of Stopping Boundaries}.

The $n^{\mathrm{th}}$ stopping boundary $\xi_n$ is obtained from an optimization problem which features $\xi_1,\dots,\xi_{n-1}$ and $K_1,\dots,K_{n-1}$. $K_n(y)$ is the value of the objective function at the optimal value $\xi_n(y)$. 
Note that all previously defined stopping boundaries $\xi_1,\dots,\xi_{n-1}$ and the quantities $K_1,\dots,K_{n-1}$ remain unchanged. 

%Now we explain the relation of the (one-marginal) classical Az\'{e}ma-Yor embedding to our embedding.
Denote the right and left endpoints of the support of the measure $\mu_i$ by 
\begin{align}
r_{\mu_i} := \inf \left\{ x:\mu_i \left((x,\infty)\right)=0 \right\}, \qquad
l_{\mu_i} := \sup \left\{ x:\mu_i \left([x,\infty)\right)=1 \right\}
\label{eq:right_endpoint_support},
\end{align}
respectively, 
and the barycentre function of $\mu_i$ by
\begin{align}
b_i(x):= \frac{\int_{[x,\infty)}{u \dd \mu_i(u)}}{\mu_i([x,\infty))} \indicator{x<r_{\mu_i}} + x \indicator{x \geq r_{\mu_i}},
\label{eq:def_barycenter}
\end{align} 

As shown by \citet{MAFI:MAFI116}, the right-continuous inverse of $b_i$, denoted by $b_i^{-1}$, can be represented as
\begin{align}
b_i^{-1}(y) =  \sup \left\{ \arginf_{\zeta < y}   \frac{c_i(\zeta)}{y-\zeta} \right\}.
\label{eq:AY_minimization}
\end{align}

It is clear and has been studied in more detail by \citet{MadanYor} that if the sequence of barycentre functions is increasing in $i$, then the intermediate law constraints do not have an impact on the corresponding iterated Az\'{e}ma-Yor embedding.
However, in general the barycentre functions will not be increasing in $i$, cf. \citet{Brown98themaximum}, and hence will affect the embedding.
We think of $\jmath_n(y)$ as the index of the last law $\mu_i, i<n$, which represents, locally at level of maximum $y$, a \textit{binding constraint} for the embedding.
As compared to the optimization from which $b^{-1}_n$ is obtained, cf. \eqref{eq:AY_minimization}, the optimization from which $\xi_n$ is obtained, cf. \eqref{eq:definition_xi_n}, has a penalty term.

\subsection{Restrictions on Measures} 
\label{subsec:restriction}

Throughout the article we will denote the left- and right-limit of a function $f$ at $x$ (if it exists) by $f(x-)$ and $f(x+)$, respectively.

Recalling the conventions in \eqref{eq:conventions}, we define inductively for $n \in \N$ and $y \geq 0$ the mappings
\begin{equation}
\begin{split}
& c^n(\cdot,y):(-\infty, y] \to \R \cup \left\{ \infty \right\},  \\
& x \mapsto c^n(x,y):=c_n(x) - \left[ c_{\imath_n(x;y)}(x) - (y-x) K_{\imath_n(x;y)}(y) \right].
\label{eq:def_c_n}
\end{split}
\end{equation}

It follows that the minimization problem in \eqref{eq:definition_xi_n} is equivalent to the following minimization problem, 
\begin{align}
\xi_n(y) \in \argmin_{\zeta \leq y} \frac{c^n(\zeta,y)}{y-\zeta},
\label{eq:definition_xi_n_tangent_interpretation}
\end{align}
where we observe that
\begin{align}
\restr{\frac{c^n(\zeta,y)}{y-\zeta}}{\zeta=y}:=&\lim_{\zeta \uparrow y} \frac{c^n(\zeta,y)}{y-\zeta} 
\label{eq:extension_c_n}  \\
=&\begin{cases} 
-c'_n(y-)+c'_{\imath_n(y;y)}(y-) + K_{\imath_n(y;y)}(y) & \text{if $c_n(y)=c_{\imath_n(y,y)}(y)$,} %\label{eq:extension_c_n_a} 
\\
+\infty & \text{else.}
%\label{eq:extension_c_n_b}
\end{cases} 
\nonumber
\end{align}
Now we want to argue existence in \eqref{eq:definition_xi_n_tangent_interpretation}.
In the case $y>0$ this can be deduced iteratively from the -- a priori -- piecewise continuity of $c^n(\cdot,y)$ and the fact that $c^n \geq 0$ together with the property that $\zeta \mapsto \frac{c^n(\zeta,y)}{y-\zeta} = \frac{c_n(\zeta)}{y-\zeta}$ for $\zeta$ sufficiently small, which is a non-increasing function. This is because, inductively, $\xi_1(y),\dots,\xi_{n-1}(y)$ are finite and fixed and hence $\imath_n(\zeta;y)=0$ for $\zeta < \min_{i<n}\xi_i(y)$.

For $y \geq 0$, we extend 
\begin{align}
\restr{\frac{c^n(\zeta,y)}{y-\zeta}}{\zeta=l_{\mu_n}} := 
\begin{cases} 
\frac{-l_{\mu_n}}{y - l_{\mu_n}}, & \text{if $l_{\mu_n}> - \infty$,} 
\\
1 & \text{else.}
%\label{eq:extension_c_n_b}
\end{cases}
%\nonumber
\end{align}
%For $y=0$ this is consistent since $\frac{c_i(\zeta)}{0-\zeta} \to 1$ as $\zeta \to l_{\mu_i}$ and, iteratively, $K_i(0) = 1$. Then it follows that $\xi_n(0) = l_{\mu_n}$.

For later use observe  
\begin{align}
\min_{i \leq n} b_i^{-1}(y) \leq \xi_n(y) \leq y
\label{eq:lower_bound_for_xi_n}
\end{align}
which follows from the definition of $\xi_n$, cf. \eqref{eq:definition_xi_n}, and
where $b_i^{-1}$ denotes the right-continuous inverse of the barycentre function $b_i$, cf. \eqref{eq:AY_minimization}.

\begin{Assumption}[Restriction on Measures]
\label{ass:unicity_minimizers} 
Recall definitions in \eqref{eq:conventions}--\eqref{eq:definition_call_price}, \eqref{eq:def_c_n} and \eqref{eq:right_endpoint_support}.
We impose the following restrictions on the measures $\mu_1,\dots,\mu_n$:
\begin{itemize}
	\item[(i)] $\int |x|\mu_i(\dd x)<\infty$ with $\int x \mu_i(\dd x)=0$ and $c_{i-1}\leq c_i$ for all $1\leq i\leq n$,
	\item[(ii)] for all $2 \leq i \leq n$ and all $0 < y < r_{\mu_i}$ the mapping 
		\begin{align}
		\hspace{-7mm}	[l_{\mu_i},y] \to \R \cup \{ +\infty \}, \quad  \zeta \mapsto \frac{c^i(\zeta,y)}			{y-\zeta} \quad \text{ has a unique minimizer $\zeta^{\star}$}
		\label{eq:suff_cond_unicity_minimzers}
		\end{align} 
		and
		\begin{align}
		\hspace{-7mm} c_i(y)>c_{\imath_i(y;y)}(y) \qquad \text{whenever $\zeta^{\star}<y$}.
		\label{eq:assumption_strictly_ordered_calls}
		\end{align}
\end{itemize} 
\end{Assumption}

\begin{Remark}[Assumption \ref{ass:unicity_minimizers}]
The condition that the call prices are non-decreasing in maturity 
\begin{align}
c_i \leq c_{i+1}, \qquad i=1,\dots,n-1,
\label{eq:ordered_calls}
\end{align}
can be rephrased by saying that $\mu_1, \dots, \mu_n$ are non-decreasing in the convex order. Condition $(i)$ in Assumption \ref{ass:unicity_minimizers} is the necessary and sufficient condition for a uniformly integrable martingale with these marginals to exist, as shown by e.g.\ \citet[Theorem 2]{Strassen65} or \citet[Chapter XI]{meyer1966probability}.

Condition $(ii)$ in Assumption \ref{ass:unicity_minimizers} will be discussed further in section \ref{sec:Extensions}.

Note that if \eqref{eq:ordered_calls} holds with strict inequality then \eqref{eq:assumption_strictly_ordered_calls} is automatically satisfied.
\end{Remark}

\begin{Remark}[Discontinuity of $\xi_1$]
\label{rem:xi_1_discontinuous}
Note that Assumption \ref{ass:unicity_minimizers}(ii) does not require that the mapping
\begin{align}
\zeta \mapsto \frac{c^1(\zeta,y)}{y-\zeta}  = \frac{c_1(\zeta,y)}{y-\zeta}
\label{eq:xi_1_discontinuous}
\end{align}
has a unique minimizer. It may happen that there is an interval of minimizers and then $\xi_1$ is discontinuous at such $y$.
\end{Remark}

%\begin{Remark}[Existence of Minimizers]
%Existence of a minimizer of \eqref{eq:suff_cond_unicity_minimzers} follows from the continuity of $c^i(\cdot,\cdot)$ which we show in Lemma \ref{lem:continuity_c_power_n}.
%\end{Remark}  

%\citet{henrylabordere:hal-00790001}

\subsection{The Main Result}

Our main result shows how to iteratively define an embedding of $(\mu_1, \dots, \mu_n)$ in the spirit of \citet{AzemaYor1} and \citet{Brown98themaximum} if Assumption \ref{ass:unicity_minimizers} is in place. 

\begin{Theorem}[Main Result]
\label{thm:main_result}
Let $n \in \N$ and assume that the measures $\mu_1,\ldots,\mu_n$ satisfy Assumption \ref{ass:unicity_minimizers} from Section \ref{subsec:restriction}. 
Recall Definitions \ref{def:indices} and \ref{def:iterated_AY}. 
Then $\tau_i < \infty$, $B_{\tau_i} \sim \mu_i$ for all $i=1,\dots,n$ and $\left( B_{\tau_n \wedge t } \right)_{t \geq 0}$ is a uniformly integrable martingale.

In addition, we have for $y \geq 0$ and $i=1,\dots,n,$
\begin{align}
\PP\left[ \bar{B}_{\tau_i} \geq y \right] = K_i(y)
\end{align}
where $K_i$ is defined in \eqref{eq:definition_Kn}.
\end{Theorem}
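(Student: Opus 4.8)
The plan is to argue by induction on the number of marginals $n$, using two structural facts about the objects of Definitions~\ref{def:indices}--\ref{def:iterated_AY}: first, the basic regularity of the stopping boundaries established in Section~\ref{subsec:Basic Properties of Stopping Boundaries} — each $\xi_i$ is non-decreasing, $\xi_i$ is continuous for $i\ge 2$, and $\min_{j\le i}b_j^{-1}(y)\le\xi_i(y)\le y$ by \eqref{eq:lower_bound_for_xi_n}; second, the strong — under Assumption~\ref{ass:unicity_minimizers}, functional — relation between $B_{\tau_i}$ and $\bar B_{\tau_i}$ recorded in Lemma~\ref{lem:Key_Properties_of_the_Stopping_Rule}, which expresses $B_{\tau_i}$ as a deterministic function of $\bar B_{\tau_i}$, piecewise over the level sets of $\jmath_i$ and equal to $\xi_i$ on the non-binding part. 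The base case $n=1$ is the Az\'ema--Yor embedding: \eqref{eq:AY_minimization} gives $\xi_1=b_1^{-1}$, hence $\tau_1=\tau^{\mathrm{AY}}_{\mu_1}$ embeds $\mu_1$ with $(B_{t\wedge\tau_1})$ UI, and the classical computation of the law of the maximum yields $\PP[\bar B_{\tau_1}\ge y]=c_1(\xi_1(y))/(y-\xi_1(y))$, which is exactly $K_1(y)$ since the conventions $c_0\equiv K_0\equiv 0$, $\xi_0\equiv-\infty$ force $\jmath_1\equiv 0$ in \eqref{eq:definition_Kn}. For the inductive step I assume $\tau_1\le\dots\le\tau_{n-1}$ are a.s.\ finite, embed $\mu_1,\dots,\mu_{n-1}$, $(B_{t\wedge\tau_{n-1}})$ is a UI martingale, and $\PP[\bar B_{\tau_j}\ge y]=K_j(y)$ for $j\le n-1$; with Lemma~\ref{lem:Key_Properties_of_the_Stopping_Rule} this pins down the joint law of $(B_{\tau_{n-1}},\bar B_{\tau_{n-1}})$.

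\textbf{Finiteness.} On $\{B_{\tau_{n-1}}\le\xi_n(\bar B_{\tau_{n-1}})\}$ one has $\tau_n=\tau_{n-1}<\infty$. On the complement $\tau_n$ is the first time $B$ exits below the non-decreasing curve $t\mapsto\xi_n(\bar B_t)$; recording separately that this curve is bounded above (by $r_{\mu_n}$ on the relevant range, using $\xi_n\le y$ and the structure of \eqref{eq:definition_xi_n}), the hypotheses $\langle B\rangle_\infty=\infty$ a.s.\ and no intervals of constancy give $\liminf_t B_t=-\infty$ and $\bar B_t\to\infty$, whence $\tau_n<\infty$ a.s.

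\textbf{Law of the maximum and of $B_{\tau_n}$.} Fix $y\ge 0$, let $T_y:=\inf\{t\ge 0:B_t\ge y\}$, and set $\psi_n(y):=\PP[\bar B_{\tau_n}\ge y]$. Away from the degenerate case $\xi_n(y)=y$ one has, up to a null set, $\{\bar B_{\tau_n}\ge y\}=\{\bar B_{\tau_{n-1}}\ge y\}\cup\{\tau_{n-1}<T_y\le\tau_n\}$, a disjoint union, because at $T_y$ the process sits at $(y,y)$ with $y>\xi_n(y)$ and so does not stop. Applying optional stopping to the UI martingale $B$ between $\tau_{n-1}$ and $\tau_n\wedge T_y$ on the event $\{\bar B_{\tau_{n-1}}<y\}$, using the inductively known law of $\bar B_{\tau_{n-1}}$ and replacing $B_{\tau_n}$ on $\{\tau_n<T_y\}$ by its value from Lemma~\ref{lem:Key_Properties_of_the_Stopping_Rule} (namely $\xi_n(\bar B_{\tau_n})$, or $\xi_{\jmath_n(\bar B_{\tau_n})}(\bar B_{\tau_n})$ on the part captured by a binding earlier boundary), and then integrating by parts, this leads to a linear inhomogeneous Stieltjes equation for $\psi_n$ of the form $d\big[(y-\xi_n(y))\psi_n(y)\big]=-\psi_n(y)\,d\xi_n(y)+(y-\xi_n(y))\,dK_{\jmath_n(y)}(y)$ on each interval of constancy of $\jmath_n$, together with matching conditions at the finitely many jumps of $\imath_n(\cdot\,;\cdot)$ (cf.\ \eqref{eq:left_continuity_imath}--\eqref{eq:right_continuity_imath}) and the boundary datum $\psi_n(y)\to 0$ as $y\to\infty$ (and $\psi_n(0)=1$). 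I would then check directly from Definition~\ref{def:indices} and the definition \eqref{eq:def_c_n} of $c^n$ that the candidate $K_n$, equivalently $(y-\xi_n(y))K_n(y)=c^n(\xi_n(y),y)$, satisfies exactly this equation: differentiating $c^n(\xi_n(y),y)$ and using the first-order condition for the minimizer in \eqref{eq:definition_xi_n_tangent_interpretation}, namely $\partial_x c^n(\xi_n(y),y)=-K_n(y)$, reproduces precisely the right-hand side above — the exact analogue of the fact that $\xi=b^{-1}$ makes $c_\mu(\xi(\cdot))$ solve the one-marginal ODE. Uniqueness for this linear equation (a Gronwall estimate, using that the only homogeneous solution tending to $0$ at $\infty$ is trivial) then gives $\psi_n=K_n$. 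Finally, since Lemma~\ref{lem:Key_Properties_of_the_Stopping_Rule} exhibits $B_{\tau_n}$ as a deterministic function of $\bar B_{\tau_n}$, the law of $B_{\tau_n}$ is the push-forward of the now-identified law of $\bar B_{\tau_n}$; computing $\EE[(B_{\tau_n}-K)^+]$ for all $K\in\R$ from this push-forward reduces, via the definitions of $K_n$ and $c^n$, to the identity $\EE[(B_{\tau_n}-K)^+]=c_n(K)$, which together with $\EE[B_{\tau_n}]=0$ identifies the law as $\mu_n$. Uniform integrability of $(B_{t\wedge\tau_n})$ then follows by the standard argument for exit-time embeddings, using the sandwiching $\xi_n(\bar B_{t\wedge\tau_n})\wedge B_{\tau_{n-1}}\le B_{t\wedge\tau_n}\le\bar B_{t\wedge\tau_n}$ and UI-ness of the bounding processes once $B_{\tau_n}\sim\mu_n$ is known.

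\textbf{Main obstacle.} The heart of the proof is the bookkeeping forced by $\imath_n$ and $\jmath_n$: $B_{\tau_n}$ does not land on a single curve, so the optional-stopping identity and the ensuing integration by parts must be organised region by region over the level sets of $\jmath_n(\cdot)$, with careful matching at its (at most $n-1$) jumps, and one must verify that the recursively defined $K_n$ — itself the value of the optimization \eqref{eq:definition_xi_n} — is exactly the solution of the resulting patched Stieltjes equation. The point where Assumption~\ref{ass:unicity_minimizers}(ii) is indispensable is precisely here: uniqueness of the minimizer in \eqref{eq:suff_cond_unicity_minimzers} and the strict-ordering condition \eqref{eq:assumption_strictly_ordered_calls} are what upgrade the merely strong relation to a genuine functional relation, making the push-forward computation for $\mu_n$ legitimate; as the counterexample in Section~\ref{sec:Extensions} shows, without them the explicit embedding can fail.
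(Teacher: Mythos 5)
Your high-level plan (induction on $n$; identify the law of $\bar B_{\tau_n}$ by deriving a first-order equation and matching it against $K_n$ through a uniqueness argument; then deduce the embedding and UI) is in the same spirit as the paper, which derives the equation for $\PP[\bar M_n\ge y]$ by excursion theory (Lemma \ref{lem:ODE_for_the_Maximum}), the equation for $K_n$ from the optimization (Lemma \ref{lem:ODE_K_n}), and concludes by uniqueness (Proposition \ref{prop:Maximizing the Maximum}). But there is a genuine gap at the step where you pass from the law of the maximum to the law of $B_{\tau_n}$. You assert that Lemma \ref{lem:Key_Properties_of_the_Stopping_Rule} "expresses $B_{\tau_i}$ as a deterministic function of $\bar B_{\tau_i}$" and that therefore the law of $B_{\tau_n}$ is the push-forward of the law of $\bar B_{\tau_n}$. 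This is false as soon as the boundaries cross, already for $n=2$: conditionally on $\bar B_{\tau_2}=y$ with $\xi_1(y)<\xi_2(y)$, the process may either have fallen straight to $\xi_1(y)$ during the first stage (so $\tau_2=\tau_1$ and $B_{\tau_2}=\xi_1(y)$), or have stopped earlier at $\xi_1(y')$ for some $y'<y$ with $\xi_1(y')>\xi_2(y')$ and then continued up to $y$ and down to $\xi_2(y)$; both events have positive probability in general, so $B_{\tau_n}$ is not a function of $\bar B_{\tau_n}$. Lemma \ref{lem:Key_Properties_of_the_Stopping_Rule} only provides one-sided implications between the events $\{M_n\ge\xi_n(y)\}$, $\{\bar M_n\ge y\}$ and events at time $\jmath_n(y)$ — precisely because no functional relation holds — and Assumption \ref{ass:unicity_minimizers}(ii) does not upgrade it to one; it ensures continuity and monotonicity of $\xi_n$.

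Consequently your final identification of $\mu_n$ does not go through as written, and this is exactly where the paper's extra ingredient enters: Lemma \ref{lem:Contributions to the Maximum} converts the implications of Lemma \ref{lem:Key_Properties_of_the_Stopping_Rule} into the identity
\begin{align*}
\PP\left[\bar M_n\ge y\right]=\PP\left[M_n\ge\xi_n(y)\right]-\PP\left[M_{\jmath_n(y)}\ge\xi_n(y)\right]+\PP\left[\bar M_{\jmath_n(y)}\ge y\right],
\end{align*}
and then the embedding follows by combining this with Proposition \ref{prop:Maximizing the Maximum}, the induction hypothesis $M_{\jmath_n(y)}\sim\mu_{\jmath_n(y)}$, $\PP[\bar M_{\jmath_n(y)}\ge y]=K_{\jmath_n(y)}(y)$, and the first-order optimality condition \eqref{eq:optimality_xi}, which together yield $\PP[M_n\ge\xi_n(y)]=-c_n'(\xi_n(y))=\mu_n([\xi_n(y),\infty))$ at almost every level $y$. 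To repair your argument you would need to replace the push-forward step by this (or an equivalent) accounting of the paths that stop on earlier boundaries; a secondary, lesser issue is that your optional-stopping derivation of the Stieltjes equation for $\psi_n$ and the claim that the candidate $K_n$ solves it both require the regularity work the paper does in Lemmas \ref{lem:continuity_c_power_n}, \ref{lem:Monotonicity_of_Stopping_Boundaries} and \ref{lem:ODE_K_n} (including the treatment of kinks of $c^n(\cdot,y)$ where \eqref{eq:constant_xi_n} rather than \eqref{eq:optimality_xi} holds), which your sketch passes over.
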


\begin{Remark}[Inductive Nature]
\label{rem:inductive_proof}
It is important to observe that $\xi_i$ and therefore also $\tau_i$, only depend on $\mu_1,\dots,\mu_i$. This gives an iterative structure allowing to \enquote{add one marginal at a time} and enables us to naturally prove the Theorem by induction on $n$.
\end{Remark}

\begin{Remark}[Minimality]
Since all $\tau_i$ are such that $\left( B_{t \wedge \tau_i} \right)_{t \geq 0}$ is a uniformly integrable martingale it follows from \citet{MR0343354} that all $\tau_i$ are \textit{minimal}.
\end{Remark}

\subsection{Examples}

Examples \ref{ex:MadanYor} and \ref{ex:BHR}, respectively, show that we recover the stopping boundaries obtained by \citet{MadanYor} and \citet{Brown98themaximum}, respectively.
In particular the case $n=1$ corresponds to the solution of \citet{AzemaYor1}.

\begin{Example}[\citet{MadanYor}]
\label{ex:MadanYor}
Recall the definition of the barycentre function $b_i$ from \eqref{eq:def_barycenter}.
\citet{MadanYor} consider the \enquote{increasing mean residial value} case, i.e.
\begin{align}
b_1 \leq b_2 \leq \dots \leq b_n.
\label{eq:IMRV}
\end{align}

We will now show that our main result reproduces their result if Assumption \ref{ass:unicity_minimizers} is in place. In fact, as can be seen below, our definitions of $\xi_i$ and $K_i$, cf. \eqref{eq:definition_xi_n} and \eqref{eq:definition_Kn}, respectively, reproduce the correct stopping boundaries in the general case, showing that Assumption \ref{ass:unicity_minimizers} is not necessary, cf. also Section \ref{sec:Extensions}.
More precisely, we have
\begin{align}
\xi_i = b_i^{-1},  \quad  K_i(y) = \frac{c_i(b_i^{-1}(y))}{y-b_i^{-1}(y)} =: \mu_i^{\mathrm{HL}}([y,\infty)), \quad \quad i=1,\dots,n, 
\label{eq:def_HL_transform}
\end{align}
where $b_i^{-1}$ denotes the right-continuous inverse of $b_i$ and $\mu_i^{\mathrm{HL}}$ is the Hardy-Littlewood transform of $\mu_i$, cf. \citet{CarraroElKarouiObloj:09}.

Clearly, the claim is true for $i=1$. Let us assume that the claim holds for all $i \leq n-1$.
Now, the optimization problem for $\xi_{n}$ in \eqref{eq:definition_xi_n} becomes
\begin{align*}
\xi_{n}(y) &\in \argmin_{\zeta \leq y} \left\{  \frac{c_{n}(\zeta)}{y-\zeta} - \indicator{ \zeta>b^{-1}_{n-1}(y) } \left[ \frac{c_{n-1}(\zeta)}{y-\zeta} -  \frac{c_{n-1}(b^{-1}_{n-1}(y))}{y-b^{-1}_{n-1}(y)}  \right]   \right\} \\
						 &\in \argmin_{\zeta \leq y} \left\{ \min_{\zeta \leq b^{-1}_{n-1}(y)}   \frac{c_{n}(\zeta)}{y-\zeta}   , \min_{\zeta \geq b^{-1}_{n-1}(y)} \left( \frac{c_{n}(\zeta)}{y-\zeta} -  \left[ \frac{c_{n-1}(\zeta)}{y-\zeta} -  \frac{c_{n-1}(b^{-1}_{n-1}(y))}{y-b^{-1}_{n-1}(y)}  \right] \right)  \right\}.
\end{align*}
It is clear that the first minimum is $A_1 = \frac{c_{n}(b^{-1}_{n}(y))}{y-b^{-1}_{n}(y)}$ since $b^{-1}_{n}(y) \leq b^{-1}_{n-1}(y)$.

As for the second minimum, we set 
\begin{align*}
F(\zeta):=\frac{c_n(\zeta)}{y-\zeta}  -\left[\frac{c_{n-1}(\zeta)}{y-\zeta}                      -\frac{c_{n-1} \left(b_{n-1}^{-1}(y) \right)}{y-b_{n-1}^{-1}(y)} \right]
\end{align*} 
and we see by direct calculation that for almost all $\zeta \in \R$
\begin{alignat*}{3}
 (y-\zeta)^2 F'(\zeta)
 &=
 (b_n(\zeta)-y)\mu_n \left([\zeta,\infty)\right)
 -(b_{n-1}(\zeta)-y)\mu_{n-1}\left([\zeta,\infty)\right)
 \\
 &=
 c_n(\zeta)\frac{b_n(\zeta)-y}
                  {b_n(\zeta)-\zeta}
 -c_{n-1}(\zeta)\frac{b_{n-1}(\zeta)-y}
                       {b_{n-1}(\zeta)-\zeta}.
\end{alignat*}
By \eqref{eq:IMRV}, we conclude
therefore
\begin{align*}
 (y-\zeta)^2 F'(\zeta) \geq \left(c_n(\zeta)-c_{n-1}(\zeta)\right)
 \frac{b_{n-1}(\zeta)-y}
      {b_{n-1}(\zeta)-\zeta} \geq 0,
\end{align*}
where the last inequality follows from the non-decrease of the $\mu_i$'s in the convex order. 
Hence $F$ is non-decreasing, and it follows that it attains its minimum at the left boundary, i.e. $A_2 = \frac{c_{n}(b_{n-1}^{-1}(y))}{y-b_{n-1}^{-1}(y)} -  \left[ \frac{c_{n-1}(b_{n-1}^{-1}(y))}{y-b_{n-1}^{-1}(y)} -  \frac{c_{n-1}(b^{-1}_{n-1}(y))}{y-b^{-1}_{n-1}(y)}  \right] = \frac{c_{n}(b_{n-1}^{-1})(y)}{y-b_{n-1}^{-1}(y)}$. Consequently, by \eqref{eq:AY_minimization}, $\min \left\{ A_1, A_2  \right\}  = A_1$ and \eqref{eq:def_HL_transform} follows.
\end{Example}

\begin{Example}[\citet{Brown98themaximum}]
\label{ex:BHR}
In the case of $n=2$ our definition of $\xi_1$ and $\xi_2$ clearly recovers the stopping boundaries in the main result of \citet{Brown98themaximum}. 
However, our embedding is not as general as their embedding because we enforce Assumption \ref{ass:unicity_minimizers}, see also the discussion in Section \ref{sec:Extensions}.
\end{Example}

\begin{Example}[Locally no Constraints]
In general we have
\begin{align}
K_n(y) \leq \mu_n^{\mathrm{HL}}([y,\infty)).
\end{align}
However, if
\begin{align}
\xi_n(y) = b^{-1}_{n}(y)
\end{align}
for some $y \geq 0$ then it follows from Theorem \ref{thm:main_result} that
\begin{align}
K_n(y) = \frac{c_n(b_n^{-1}(y))}{y-b_n^{-1}(y)} = \mu_n^{\mathrm{HL}}([y,\infty)),  
\end{align}
i.e. locally at level of maximum $y$ the intermediate laws have no impact on the distribution of the terminal maximum as compared with the (one marginal) Az\'{e}ma-Yor embedding.
\end{Example}

\subsection{Properties of $\xi_n$ and $K_n$}
\label{subsec:Basic Properties of Stopping Boundaries}

Under Assumption \ref{ass:unicity_minimizers} we establish the continuity of $\xi_n$ for $n \geq 2$, cf. Lemma \ref{lem:continuity_c_power_n}, and prove monotonicity of $\xi_n$ for $n \geq 1$, cf. Lemma \ref{lem:Monotonicity_of_Stopping_Boundaries}.
In Lemma \ref{lem:ODE_K_n} we derive an ODE for $K_n$ which will be later used to identify the distribution of the maximum of the embedding from Definition \ref{def:iterated_AY}.

Let $n_1 < n_2$. Recalling Remark \ref{rem:inductive_proof} it follows that the embedding of the first $n_1$ marginals in the $n_2$-marginals embedding problem coincides with the $n_1$-marginals embedding problem. Hence it is natural to prove the Lemma by induction over the number of marginals $n$.

\begin{Lemma}[Continuity of $\xi_n$]
\label{lem:continuity_c_power_n}
Let $n \geq 2$ and let Assumption \ref{ass:unicity_minimizers} hold. 
Set 
\begin{align}
\Delta := \left\{ (x,y)\in \R \times \R_+ : x < y \right\}.
\end{align}
Then the mappings 
\begin{alignat}{5}
&c^n: \Delta \to \R, \quad &&(x,y) &&\mapsto c^n(x,y), \\
&\xi_n:\R_+ \to \R,  \quad &&\quad y     &&\mapsto \xi_n(y)  
\end{alignat}
are continuous.  
\end{Lemma}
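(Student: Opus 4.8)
The plan is to argue by induction on $n$, using the equivalent "tangent" formulation \eqref{eq:definition_xi_n_tangent_interpretation}: $\xi_n(y)$ minimizes $\zeta\mapsto c^n(\zeta,y)/(y-\zeta)$ over $\zeta\le y$, and by Assumption \ref{ass:unicity_minimizers}(ii) this minimizer is unique when $0<y<r_{\mu_n}$. The first task is to establish joint continuity of $c^n$ on $\Delta$. By definition \eqref{eq:def_c_n}, $c^n(x,y)=c_n(x)-c_{\imath_n(x;y)}(x)+(y-x)K_{\imath_n(x;y)}(y)$. Each $c_i$ is continuous (it is a call-price function of an integrable measure), and by the induction hypothesis each $\xi_i$ ($i\le n-1$) is continuous, hence each $K_i$ is continuous (it is built from $c_{\jmath_i}$, $\xi_i$ and, recursively, $K_{\jmath_i}$ evaluated at $\xi_i(y)$ and $y$ — all continuous). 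The only source of discontinuity in $c^n$ is the index $\imath_n(x;y)$, which jumps exactly across the graphs $x=\xi_k(y)$. So the heart of the continuity of $c^n$ is to check that the two branches match up on these graphs: when $\imath_n$ jumps from $k-1$ to $k$ as $x$ crosses $\xi_k(y)$, one must show $c_{k-1}(x)-(y-x)K_{k-1}(y)=c_k(x)-(y-x)K_k(y)$ at $x=\xi_k(y)$. This is precisely what the definition \eqref{eq:definition_Kn} of $K_k$ encodes: rearranging \eqref{eq:definition_Kn} gives $(y-\xi_k(y))K_k(y)=c_k(\xi_k(y))-c_{\jmath_k(y)}(\xi_k(y))+(y-\xi_k(y))K_{\jmath_k(y)}(y)$, and an induction on the "binding index" chain collapses this to the required matching identity. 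Combined with left/right-continuity of $\imath_n$ in each variable \eqref{eq:left_continuity_imath}--\eqref{eq:right_continuity_imath}, this yields continuity of $c^n$ on $\Delta$.

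With $c^n$ continuous, I turn to $\xi_n$. Fix $y_0>0$. I would treat the generic case $0<y_0<r_{\mu_n}$ first. Take $y_m\to y_0$; by \eqref{eq:lower_bound_for_xi_n} the values $\xi_n(y_m)$ lie in the compact set $[\min_{i\le n}b_i^{-1}(y_0)-\varepsilon,\ y_0+\varepsilon]$ eventually (using continuity of the $b_i^{-1}$ where needed, or monotonicity bounds), so along a subsequence $\xi_n(y_m)\to\zeta^*$ for some $\zeta^*\le y_0$. A standard $\Gamma$-convergence / lower-semicontinuity argument — using joint continuity of $c^n$ and of the denominator, plus the fact proved earlier that $\zeta\mapsto c^n(\zeta,y)/(y-\zeta)$ equals the non-increasing $c_n(\zeta)/(y-\zeta)$ for small $\zeta$, which rules out escape of the minimizing value to $-\infty$ — shows $\zeta^*$ is a minimizer of $\zeta\mapsto c^n(\zeta,y_0)/(y-\zeta)$; some care is needed near $\zeta=y_0$ where the extension \eqref{eq:extension_c_n} is used, and near $\zeta=l_{\mu_n}$ using the extension given just before the statement of Assumption \ref{ass:unicity_minimizers}. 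By Assumption \ref{ass:unicity_minimizers}(ii) the minimizer is unique, so $\zeta^*=\xi_n(y_0)$; since every subsequence has a further subsequence converging to the same limit, $\xi_n(y_m)\to\xi_n(y_0)$. The remaining boundary cases — $y_0=0$, and $y_0\ge r_{\mu_n}$ (where $\xi_n$ is constant, respectively forced up against $y$ by the extension and the structure of $c_n$) — are handled directly and are routine.

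The main obstacle is the matching identity along the jump graphs of $\imath_n$: making precise the induction that $c^{k}(\cdot,y)/(y-\cdot)$, evaluated as we pass binding indices, telescopes correctly so that the two one-sided limits of $c^n$ agree on each curve $x=\xi_k(y)$. This is where Assumption \ref{ass:unicity_minimizers}(ii) — in particular the auxiliary condition \eqref{eq:assumption_strictly_ordered_calls} ensuring $c_n(y)>c_{\imath_n(y;y)}(y)$ whenever the minimizer is interior, which controls the $\zeta\uparrow y$ endpoint behaviour via \eqref{eq:extension_c_n} — really pulls its weight. A secondary subtlety is that $\xi_1$ need not be continuous (Remark \ref{rem:xi_1_discontinuous}), so the base case of the induction is the pair $(n=1,n=2)$ rather than $n=1$ alone: one must verify that the (at most countably many) discontinuities of $\xi_1$ do not propagate, i.e. that $K_1$ and hence $c^2$ remain continuous despite $\xi_1$ jumping — which again follows from the matching identity, since at a jump of $\xi_1$ the function $c_1(\zeta)/(y-\zeta)$ is flat across the interval of minimizers, so $K_1(y)$ is unambiguous and continuous in $y$.
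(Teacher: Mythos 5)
Your plan follows essentially the same route as the paper: induction on $n$ with the base case $n=1,2$ (handled via \citet{Brown98themaximum}), continuity of $c^n$ reduced to the matching of the two branches across the graphs $x=\xi_k(y)$ (and across the jump interval of $\xi_1$) using the telescoping built into the definition \eqref{eq:definition_Kn} of $K_k$, and then continuity of $\xi_n$ from joint continuity of $c^n$ together with uniqueness of the minimizer in Assumption \ref{ass:unicity_minimizers}(ii), with the endpoint behaviour at $\zeta=y$ controlled through \eqref{eq:extension_c_n} and \eqref{eq:assumption_strictly_ordered_calls}. The only cosmetic difference is that you phrase the last step as a subsequence/argmin-convergence argument while the paper argues by contradiction; these are the same argument, so no substantive gap.
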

 
\begin{proof}
We prove the claim by induction over $n$.
Let us start with the induction basis $n=1,2$. Continuity of $c^1$ is the same as continuity of $c_1$ and continuity of $c^2$ is proven by \citet{Brown98themaximum}, cf. Lemma 3.5 therein. As for continuity of $\xi_2$ we note that our Assumption \ref{ass:unicity_minimizers}(ii) precisely rules out discontinuities of $\xi_2$ as shown by \citet[Section 3.5]{Brown98themaximum}.
By induction hypothesis we assume continuity of $c^1,\dots,c^{n-1}$ and $\xi_2, \dots, \xi_{n-1}$. 

%It follows from the definition of $\imath_n$ that $\imath_n(\cdot;y)$ is left-continuous for fixed $y$.  
The only possibility that a discontinuity of $c^n$ can occur is when the index $\imath_n$ changes.
This only happens at $(x,y)=(\xi_k(y),y)$ for some $k<n$, or, in the case that $y$ is a discontinuity of $\xi_1$, at $(x,y)$ where $x \in [\xi_1(y-),\xi_1(y+) ]$. 
We prove continuity at $(x,y)$. 

Consider first the following cases:
\begin{alignat}{5}
&\text{if $x=\xi_k(y)$ }		&&\text{ then}			\quad &&x \neq \xi_j(y) \quad &&\text{for all $j \neq k$, $j < n$}, \label{eq:continuity_proof_special_case_1} \\
\text{or,} \qquad &\text{if $x \in [\xi_1(y-),\xi_1(y+) ]$} \quad &&\text{ then} \quad &&x \neq \xi_j(y) \quad &&\text{for all $j \neq 1$, $j < n$}.
\label{eq:continuity_proof_special_case_2}
\end{alignat}
Note that in case \eqref{eq:continuity_proof_special_case_2} we have from Remark \ref{rem:xi_1_discontinuous}
\begin{align}
K_1(y) = \frac{c_1(x)}{y-x} \qquad \text{for all $x \in [\xi_1(y-),\xi_1(y+)]$.}
\label{eq:non_unique_minimizer_n_1}
\end{align}

We will call a point $(x,y)$ to be \enquote{to the right of $\xi_k$} if $\xi_k(y) < x$ and \enquote{to the left of $\xi_k$} if $\xi_k(y) \geq x$.
From \eqref{eq:continuity_proof_special_case_1} and \eqref{eq:continuity_proof_special_case_2} it follows that there exists an $\epsilon > 0$ such that each point $(\tilde{x},\tilde{y})$ in the $\epsilon$-neighbourhood of $(x,y)$ is either to the left or to the right of $\xi_k$ and there are no other boundaries in this $\epsilon$-neighbourhood, in particular
\begin{align}
k = \imath_n(x_r;y_r), \qquad j = \imath_n(x_l;y_l)  = \imath_{\imath_n(x_r;y_r)}(x_r;y_r),
\label{eq:relation_indices_special_case_continuity_proof_c_n}
\end{align}
where $(x_r,y_r)$ is in the $\epsilon$-neighbourhood of $(x,y)$ and to the right of $\xi_k$ and $(x_l,y_l)$ is in the $\epsilon$-neighbourhood of $(x,y)$ and to the left of $\xi_k$.

If $x<y$, we have by induction hypothesis
\begin{alignat}{3}
c^n(x_r,y_r)  				  & =&& \hspace{5mm} c_n(x_r)-\left\{ c_k(x_r) - (y_r - x_r) K_k(y_r) \right\} \label{eq:continuity_1} \\
\xrightarrow[\text{from the right}]{(x_r,y_r)\to (x,y)} \hspace{1mm} & && \hspace{5mm} c_n(x)-\left\{ c_k(x) - (y - x) K_k(y) \right\} \nonumber  \\
							  & \stackrel[\mathclap{\eqref{eq:continuity_proof_special_case_1}-\eqref{eq:relation_indices_special_case_continuity_proof_c_n}}]{\mathclap{\eqref{eq:definition_Kn}}}{=} \hspace{2mm} && \hspace{5mm} c_n(x)-\left\{ c_k(x) - \frac{y - x}{y-x} \Big( c_k(x) - \left[ c_j(x) - (y-x)K_j(y) \right] \Big) \right\}  \nonumber  \\
							  & = \hspace{2mm} && \hspace{5mm} c_n(x)- \left[ c_j(x) - (y-x)K_j(y) \right] \nonumber  \\
							  & \stackrel{\mathclap{\eqref{eq:def_c_n}}}{=} \hspace{2mm} && \hspace{5mm} c^n(x,y) \label{eq:continuity_2}\\
							  & = \hspace{2mm} && \hspace{5mm} c_n(x)- \left[ c_j(x) - (y-x)K_j(y) \right] \nonumber \\
\xleftarrow[\text{from the left}]{(x_l,y_l)\to (x,y)}  \hspace{1mm} & && \hspace{5mm} c_n(x_l)-\left\{ c_j(x_l) - (y_l - x_l) K_j(y_l) \right\} = c^n(x_l,y_l).
\label{eq:continuity_3}
\end{alignat}
From \eqref{eq:continuity_1}, \eqref{eq:continuity_2} and \eqref{eq:continuity_3} continuity of $c^n$ follows for any sequence $(x_n,y_n) \to (x,y)$. We now extend the above argument to the situation when $x=y$ which establishes left-continuity of $c^n$ at $(y,y)$. In this case we have $x=\xi_k(y)=y$. For this to hold we must have $c_k(y) = c_j(y)$. Using boundedness of $K_i$ for $i<n$ shows that \eqref{eq:continuity_2} and \eqref{eq:continuity_3} converge to each other. 

To relax \eqref{eq:continuity_proof_special_case_1} and \eqref{eq:continuity_proof_special_case_2} we successively write out $K_k, K_j, \dots,$ until the assumption of the first case holds true and then, successively, apply the special case.

It remains to prove continuity of $\xi_n$ which we prove by contradiction. Assume there exist $\epsilon>0$ and $y \geq 0$ such that for all $\delta>0$ there exists a $y' \in (y,y+\delta)$ such that $| \xi_n(y) - \xi_n(y') |>\epsilon $. 
By \eqref{eq:lower_bound_for_xi_n} the limit of $\xi_n(y')$ as $y' \downarrow y$ exists at least along some subsequence and we denote it by $\tilde{\xi}_n$. By assumption $\tilde{\xi}_n \neq \xi_n(y)$.

Consider first the case that $\xi_n(y)<y$ and $\tilde{\xi}_n < y$.
Using continuity of $c^n$ we deduce
$\frac{c^n(\xi_n(y'),y')}{y'-\xi_n(y')} \to \frac{c^n(\tilde{\xi}_n,y)}{y-\tilde{\xi}_n}$ as $y' \to y$.

Now, if 
\begin{align}
 \frac{c^n(\tilde{\xi}_n ,y)}{y -\tilde{\xi}_n } \neq \frac{c^n(\xi_n(y),y)}{y-\xi_n(y)}  
 \label{eq:c_n_different}
\end{align}  
then we obtain a contradiction to the optimality of either $\xi_n(y)$ or some $\xi_n(y')$ for $y'$ close enough to $y$ by continuity of $c^n$. 
If
\begin{align}
 \frac{c^n(\tilde{\xi}_n,y)}{y-\tilde{\xi}_n} = \frac{c^n(\xi_n(y),y)}{y-\xi_n(y)} 
  \label{eq:c_n_same}
\end{align}
we obtain a contradiction to Assumption \ref{ass:unicity_minimizers}(ii).

We now consider the case that either $\xi_n(y) = y$ or $\tilde{\xi}_n = y$.
The case $\xi_n(y)<y$ and $\tilde{\xi}_n = y$ is ruled out by condition \eqref{eq:assumption_strictly_ordered_calls} from Assumption \ref{ass:unicity_minimizers}(ii): Indeed, for the sequence $\left( K_n(y') = \frac{c^n(\xi_n(y'),y')}{y'-\xi_n(y')} \right)$ to be bounded we must have $c^n(\xi_n(y'),y') \to 0$. Recalling the left-continuity of $c^n$ at $(y,y)$ implies $c_n(y) = c_{\imath_n(y;y)}(y)$.

The case $\xi_n(y)=y$ and $\tilde{\xi}_n < y$ follows as above by distinguishing the cases \eqref{eq:c_n_different} and \eqref{eq:c_n_same} and by recalling \eqref{eq:extension_c_n} and the left-continuity of $c^n$ at $(y,y)$.
\end{proof}

\begin{Lemma}[Monotonicity of $\xi_n$]
\label{lem:Monotonicity_of_Stopping_Boundaries}

Let $n\in \N$ and let Assumption \ref{ass:unicity_minimizers} hold. 
Then 
\begin{align}
\xi_n:\R_+ \to \R, \quad y \mapsto \xi_n(y) \qquad \text{is non-decreasing.}
\label{eq:monotonicity_xi}
\end{align} 
\end{Lemma}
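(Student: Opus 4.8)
\textbf{Proof plan for Lemma \ref{lem:Monotonicity_of_Stopping_Boundaries}.}

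The plan is to argue by induction on $n$, exploiting the inductive structure noted in Remark \ref{rem:inductive_proof}. For the base case $n=1$, the boundary $\xi_1 = b_1^{-1}$ is the right-continuous inverse of the barycentre function $b_1$, which is itself non-decreasing; hence $\xi_1$ is non-decreasing, and the same classical argument that underlies the Az\'ema--Yor embedding applies. So assume $\xi_1,\dots,\xi_{n-1}$ are all non-decreasing and, by Lemma \ref{lem:continuity_c_power_n}, that $\xi_2,\dots,\xi_{n-1}$ and $c^2,\dots,c^n$ are continuous.

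For the inductive step, I would fix $y < y'$ and suppose towards a contradiction that $\xi_n(y) > \xi_n(y')$. Write $x := \xi_n(y)$ and $x' := \xi_n(y')$, so $x' < x$. The key is a ``single-crossing'' / tangent-line comparison between the two optimisation problems \eqref{eq:definition_xi_n_tangent_interpretation} at levels $y$ and $y'$: $\xi_n(y)$ minimises $\zeta \mapsto c^n(\zeta,y)/(y-\zeta)$ over $\zeta \le y$, and similarly at $y'$. Optimality at $y$ gives $c^n(x,y)/(y-x) \le c^n(x',y)/(y-x')$, and optimality at $y'$ gives $c^n(x',y')/(y'-x') \le c^n(x,y')/(y'-x)$. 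The strategy is to combine these two inequalities and derive a contradiction using the structure of $c^n$. Geometrically, $c^n(\cdot,y)/(y-\cdot)$ should be read as the slope of the chord from the point $(y,0)$ to the graph of $c^n(\cdot,y)$; minimising it picks out a tangent from $(y,0)$. When $y$ increases to $y'$, the pivot point $(y,0)$ moves to the right, and — provided the function $c^n(\cdot,y')$ does not lie ``too far below'' $c^n(\cdot,y)$ in the relevant region — the tangency point can only move right as well, i.e. $\xi_n$ is non-decreasing. One has to make this precise by analysing how $c^n(\cdot,y')$ compares to $c^n(\cdot,y)$: on the region where $\imath_n$ takes the same value for both levels, $c^n(x,y') - c^n(x,y) = (y'-y)K_{\imath_n(x;y)}(y') - \big(\text{terms from }c_{\imath_n(x;y)}\text{ and from }K\text{ evaluated at }y\text{ vs }y'\big)$, which one controls using the induction hypothesis (monotonicity of the lower boundaries, hence $\imath_n(x;y') \le \imath_n(x;y)$ by left-continuity of the $\xi_k$'s in their argument, cf. \eqref{eq:right_continuity_imath}) together with whatever monotonicity of $K_i$ in $y$ has been (or will be) established. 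I would split into cases according to whether $\imath_n(x;y) = \imath_n(x';y)$, i.e.\ whether an intermediate boundary $\xi_k$ separates $x'$ from $x$; if such a $k < n$ exists, then since $\xi_k$ is non-decreasing one gets $\xi_k(y) < x' < x \le$ something forcing a relationship between the penalty terms at $y$ and $y'$ that can be played off against the two optimality inequalities.

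The main obstacle I anticipate is the interaction between the moving pivot $(y,0)$ and the $y$-dependence of $c^n$ \emph{through the penalty term} $-[c_{\imath_n}(\cdot) - (y-\cdot)K_{\imath_n}(y)]$: unlike the classical Az\'ema--Yor case \eqref{eq:AY_minimization} where $c_n(\zeta)$ does not depend on $y$ at all and monotonicity of $\xi_n$ is immediate from the chord picture, here the objective's numerator itself varies with $y$, and the index $\imath_n(\zeta;y)$ jumps. So the crux is to show that this extra $y$-dependence is ``monotone in the right direction'', which should reduce to the monotonicity of $K_i(\cdot)$ in $y$ — but $K_i$ is decreasing in $y$ (it will turn out to be $\PP[\bar B_{\tau_i}\ge y]$), so one must be careful about signs. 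I would handle this by working with the equivalent formulation \eqref{eq:definition_xi_n} directly, where the bracketed term $\frac{c_{\imath_n}(\zeta)}{y-\zeta} - K_{\imath_n}(y)$ appears, and by exploiting that on each maximal interval of constancy of $\imath_n(\cdot;y)$ the map $\zeta \mapsto c^n(\zeta,y)/(y-\zeta)$ behaves like an ordinary Az\'ema--Yor objective shifted by a constant; continuity of $c^n$ across the (finitely many) jump points of $\imath_n$, supplied by Lemma \ref{lem:continuity_c_power_n}, then lets me glue these pieces and run the chord/tangent comparison globally. Uniqueness of the minimiser from Assumption \ref{ass:unicity_minimizers}(ii) is what rules out the degenerate equality case, exactly as in the proof of continuity above.
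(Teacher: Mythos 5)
There is a real gap at exactly the point you flag as the ``main obstacle'': you leave unresolved how the $y$-dependence of $c^n$ through the penalty term is controlled, and the resolution you sketch (reduce it to monotonicity of $K_i$ in $y$) is not the right mechanism --- as you yourself notice, $K_i$ is \emph{decreasing}, so that route has the sign pointing the wrong way and the two optimality inequalities for a fixed pair $y<y'$ cannot be closed into a contradiction this way. The paper's proof works infinitesimally and uses an exact cancellation rather than any monotonicity of $K$: locally around $(\xi_n(y),y)$, where the relevant index $\ell=\jmath_n(y)$ is constant, one has from \eqref{eq:def_c_n}
\begin{align*}
c^n(x,y+\delta)-c^n(x,y)=(y+\delta-x)K_{\ell}(y+\delta)-(y-x)K_{\ell}(y),
\end{align*}
which is \emph{affine in $x$}. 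Adding this affine difference ($l_2$) to a supporting tangent $l_1$ of $c^n(\cdot,y)$ at $\xi_n(y)$ through the pivot $(y,0)$ produces a supporting tangent of $c^n(\cdot,y+\delta)$ at $\xi_n(y)$, and when evaluated at the new pivot $y+\delta$ all the $K_{\ell}(y)$, $K_{\ell}(y+\delta)$ terms cancel identically, leaving $\delta D$ with $D\leq 0$; the sign of $D$ comes from the convex ordering $c_{\ell}\leq c_n$ in \eqref{eq:ordered_calls}, not from any property of $K$. That supporting tangent lying below the axis at $y+\delta$ is what forbids $\xi_n(y+\delta)<\xi_n(y)$. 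Your plan never identifies this cancellation, so the crux of the argument is missing rather than merely deferred.

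The second, smaller issue is the globalisation. You propose to glue the chord comparison across the jump points of $\imath_n(\cdot;y)$ for an arbitrary pair $y<y'$, but the tangent argument above is genuinely local in $y$ (small $\delta$, same index $\ell$, no other boundary in a small rectangle), and gluing in the space variable does not extend it to arbitrary increments when lower boundaries are crossed. The paper instead removes the no-crossing restriction by a pigeonhole argument: assuming $\xi_n(y)>\xi_n(y+\delta)$, continuity of $\xi_n$ yields $n$ disjoint rectangles $(x_i,x_{i-1})\times(y_{i-1},y_i)$ along the graph, and since each of the $n-1$ monotone lower boundaries $\xi_k$, $k<n$, can meet at most one rectangle, some rectangle is free of all of them; inside it the local case applies and contradicts the assumed decrease. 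Your appeal to uniqueness of the minimiser (Assumption \ref{ass:unicity_minimizers}(ii)) is also not how equality/degenerate cases are handled here --- that assumption enters through the continuity of $\xi_n$ (Lemma \ref{lem:continuity_c_power_n}), which is what both reduction steps (a.e.\ monotonicity and the rectangle construction) actually rely on.
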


\begin{proof}
The claim for $n=1,2$ follows from \citet{Brown98themaximum}.
Assume by induction hypothesis that we have proven monotonicity of $\xi_1,\dots,\xi_{n-1}$.

We follow closely the arguments of \citet[Lemma 3.2]{Brown98themaximum}.
Since $\xi_n$ is continuous it is enough to prove monotonicity at almost every $y \geq 0$.
The set of $y$'s which are a discontinuity of $\xi_1$ is a null-set, and hence we can exclude all such $y$'s. 
In the following we fix a $y$ where $\xi_1,\dots,\xi_n$ are continuous. 

We will first consider the case when $\xi_n(y) \neq \xi_j(y)$ for all $j<n$. By continuity of $\xi_n$ it follows that there is an $\epsilon>0$ such that 
\begin{align}
\text{$\xi_n(\tilde{y}) \neq \xi_j(\tilde{y})$ and $\ell := \jmath_n(y) = \jmath_n(\tilde{y})$ for all $\tilde{y} \in (y-\epsilon, y+\epsilon)$ and $j<n$,}
\label{eq:no_boundary_crossing_montonicity_proof}
\end{align}
and furthermore
\begin{align}
(\xi_n(\tilde{y}),\tilde{y}) &\in (\xi_n(y)-\epsilon, \xi_n(y)+\epsilon) \times (y-\epsilon, y+\epsilon).
\label{eq:epsilon_ball_montonicity_proof}
\end{align}

Let $l_1$ denote a supporting tangent to $c^n(\cdot,y)$ at $\xi_n(y)$ which goes through the $x$-axis at $y$, i.e.  
\begin{align*}
l_1(x) = c^n(\xi_n(y),y) + (x-\xi_n(y))(D - K_{\ell}(y)),
%\label{eq:expression_1_for_l_1}
\end{align*}
where $D$ lies between the left- and right-derivatives of $c_n - c_{\ell}$ at $\xi_n(y)$.
%Alternatively, we can express $l_1$ as 
%\begin{align}
%l_1(x) = \frac{y-x}{y-\xi_n(y)} c^n(\xi_n(y),y)  
%\end{align}
%which implies that 
Using that $l_1(y) = 0$ we can write
\begin{align*}
D - K_{\ell}(y) =  -\frac{c^n(\xi_n(y),y)}{y-\xi_n(y)} \stackrel{\eqref{eq:def_c_n}}{=} - \frac{c_n(\xi_n(y)) -c_{\ell}(\xi_n(y))}{y-\xi_n(y)} - K_{\ell}(y)
\end{align*}
and thus by \eqref{eq:ordered_calls}
\begin{align}
D \leq 0.% \qquad \text{i.e.} \quad -c'_{\jmath_n(y)}(\xi_n(y)\pm) \leq -c'_n(\xi_n(y)\pm).
\label{eq:stochastic_ordering_at_xi}
\end{align}
We also have
\begin{align}
l_1(y+\delta) = \delta(D-K_{\ell}(y)).
\label{eq:l_1(y+delta)}
\end{align}

Choose $\delta \in (0, \epsilon)$ sufficiently small. Our goal is to prove $\xi_n(y+\delta) \geq \xi_n(y)$. 
Recall that $\xi_n(y+\delta)$ is determined from $y+\delta$ and $c^n(\cdot,y+\delta)$ only. Since we know that $\xi_n(y+\delta) \in (\xi_n(y)-\epsilon, \xi_n(y)+\epsilon) := I$ it will turn out to be enough to look at $c^n(x,y+\delta)$ only for $x \in (\xi_n(y)-\epsilon, \xi_n(y)+\epsilon)$. 
For such an $x$ we have
\begin{align}
c^n(x,y+\delta) - c^n(x,y) \stackrel{\eqref{eq:def_c_n}}{=}  
\left( y+\delta-x \right) K_{\ell}(y+\delta)  - \left( y-x \right) K_{\ell}(y).
\label{eq:differnece_c_n}
\end{align}

Let $l_2$ be the supporting tangent to $c^n(\cdot,y+\delta) - c^n(\cdot,y)$ at $\xi_n(y)$, i.e.
\begin{align*}
l_2(x) = c^n(\xi_n(y),y+\delta) - c^n(\xi_n(y),y) + (x-\xi_n(y))(K_{\ell}(y) - K_{\ell}(y+\delta)).
\end{align*}
Hence,
\begin{align}
l_1(y+\delta) + l_2(y+\delta) \hspace{2mm} \stackrel{\mathclap{\eqref{eq:l_1(y+delta)}}}{=}& \hspace{5mm}  \delta(D-K_{\ell}(y)) \nonumber \\ 
								& + c^n(\xi_n(y),y+\delta) - c^n(\xi_n(y),y) \nonumber \\
								& + (y + \delta - \xi_n(y))(K_{\ell}(y) - K_{\ell}(y+ \delta)) \nonumber \\
							   \stackrel{\mathclap{\eqref{eq:differnece_c_n}}}{=}& \hspace{5mm} \delta D \leq 0.
\label{eq:tangent_negative}	
\end{align}

Now, since $c^n(\cdot,y+\delta) - c^n(\cdot,y)$ is linear (and therefore convex) in the domain $I$, $l_1 + l_2$ is a supporting tangent to $c^n(\cdot,y+\delta)$ at $\xi_n(y)$, i.e.
\begin{alignat}{3}
(l_1 + l_2)(x)        &\leq c^n(x,y+\delta) \quad &&\text{for $x \in I$}, \\
(l_1 + l_2)(\xi_n(y)) &= c^n(\xi_n(y),y+\delta).\quad &&
\end{alignat}
Recall that $\xi_n(y+\delta)$ is determined as the $x$-value where the supporting tangent to $c^n(\cdot,y+\delta)$ which passes the $x$-axis at $y+\delta$ touches $c^n(\cdot,y+\delta).$
Next we exploit the fact that $\xi_n(y+\delta) \in I$ which implies that we only need to show that $\xi_n(y+\delta) \not\in (\xi_n(y)-\epsilon,\xi_n(y))$.
Indeed, this follows from \eqref{eq:tangent_negative} which yields that any supporting tangent to $c^n(\cdot,y+\delta)$ at some $\zeta \in (\xi_n(y)-\epsilon, \xi_n(y))$ must be below the $x$-axis when evaluated at $y+\delta$. We refer to \citet[Fig.7]{Brown98themaximum} for a graphical illustration of this fact.

Now we relax the assumption \eqref{eq:no_boundary_crossing_montonicity_proof}. Assume that there exists a $\delta>0$ such that $\xi_n(y)>\xi_n(y+\delta)$. We derive a contradiction to the special case as follows. Set $y_0:=y$ and $y_n:=y+\delta$. Recall that $\xi_n$ is continuous. Now we can choose $y_0<y_1<\dots<y_{n-1}<y_n$ such that $\xi_n(y_0)>\xi_n(y_1)>\dots>\xi_n(y_{n-1})>\xi_n(y_n)$. Set $x_i:=\xi_n(y_i)$, $i=0,\dots,n$. 
Observe that by monotonicity of $\xi_k$, $k<n$ the graph of $\xi_k$ intersects with at most one rectangle $(x_i,x_{i-1}) \times (y_{i-1},y_i)$, $i=1,\dots,n$. Consequently, there must exist at least one integer $j$ such that the rectangle $R:=(x_j,x_{j-1}) \times (y_{j-1},y_j)$ is disjoint with the graph of every $\xi_k$, $k<n$. By construction and continuity of $y \mapsto \xi_n(y)$ $R$ is not disjoint with the graph of $\xi_n$.
Inside this rectangle $R$ the conditions of the special case \eqref{eq:no_boundary_crossing_montonicity_proof} are satisfied. 
Recalling that $\xi_n(y_j)=x_j<x_{j-1}=\xi_n(y_{j-1})$ and by continuity of $y \mapsto \xi_n(y)$, we can find two points $s_1<s_2$ such that $z_1=\xi_n(s_1)>\xi_n(s_2)=z_2$ and $(z_1,s_1) \in R, (z_2,s_2) \in R$. This is a contradiction.
\end{proof}

\begin{Lemma}[ODE for $K_n$]
\label{lem:ODE_K_n}
Let $n\in \N$ and let Assumption \ref{ass:unicity_minimizers} hold. 
Then 
\begin{align}
y \mapsto K_n(y) \qquad \text{is absolutely continuous and non-increasing.}
\end{align}

If we assume in addition that the embedding property of Theorem \ref{thm:main_result} is valid for the first $n-1$ marginals then for almost all $y \geq 0$ we have: 

If $\xi_n(y)<y$ then
\begin{align}
K'_n(y)  + \frac{K_n(y)}{y-\xi_n(y)} = K'_{\jmath_n(y)}(y) + \frac{K_{\jmath_n(y)}(y)}{y-\xi_n(y)}
\label{eq:ODE_K_n}
\end{align}
where $K'_j$ denotes the derivative of $K_j$ which exists for almost all $y \geq 0$ and $j=1,\dots,n$.

If $\xi_n(y)=y$ then
\begin{align}
K_n(y+) = K_{\jmath_n(y)}(y+).
\label{eq:ODE_K_n_2}
\end{align}
\end{Lemma}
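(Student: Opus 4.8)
The plan is to treat the three assertions in turn---absolute continuity and monotonicity of $K_n$, the ODE \eqref{eq:ODE_K_n} on the set $\{\xi_n(y)<y\}$, and the boundary relation \eqref{eq:ODE_K_n_2} on $\{\xi_n(y)=y\}$---and to run the whole argument by induction on $n$, as in Remark \ref{rem:inductive_proof}, so that the properties of $K_1,\dots,K_{n-1}$ (and the embedding property for the first $n-1$ marginals) are available. First I would fix a $y$ at which $\xi_1,\dots,\xi_n$ are continuous (all other $y$ form a null set by Lemma \ref{lem:continuity_c_power_n} and Remark \ref{rem:xi_1_discontinuous}) and at which each $K_j$, $j<n$, is differentiable, and where additionally $\jmath_n$ is locally constant---this last holds off a null set because $\imath_n(x,\cdot)$ is right-continuous with finitely many jumps \eqref{eq:right_continuity_imath} and $\xi_n$ is continuous, so $\jmath_n(y)=\imath_n(\xi_n(y),y)$ can only jump at countably many $y$. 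On a neighbourhood where $\jmath_n\equiv\ell$ and where $\xi_n(\tilde y)$ stays strictly below $\tilde y$, the defining formula \eqref{eq:definition_Kn} reads
\begin{align*}
(y-\xi_n(y))K_n(y) = c_n(\xi_n(y)) - c_\ell(\xi_n(y)) + (y-\xi_n(y))K_\ell(y),
\end{align*}
i.e. $c^n(\xi_n(y),y) = (y-\xi_n(y))\big(K_n(y)-K_\ell(y)\big)$. The key observation, exactly as in the classical Az\'ema--Yor / Brown--Hobson--Rogers picture, is that $\xi_n(y)$ is the tangency point of the line through $(y,0)$ supporting $c^n(\cdot,y)$ from below (this is the content of \eqref{eq:definition_xi_n_tangent_interpretation}); hence the first-order optimality condition lets one differentiate through the $\argmin$: the derivative of $y\mapsto c^n(\xi_n(y),y)$ picks up only the explicit $y$-dependence, the $\xi_n'(y)$ term dropping out by optimality. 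Concretely, $\partial_y c^n(x,y) = K_\ell(y) + (y-x)K_\ell'(y)$ from \eqref{eq:def_c_n}, and differentiating the displayed identity gives, after the tangency term cancels,
\begin{align*}
K_\ell(y) + (y-\xi_n(y))K_\ell'(y) = \big(1-\xi_n'(y)\big)\big(K_n(y)-K_\ell(y)\big) + (y-\xi_n(y))\big(K_n'(y)-K_\ell'(y)\big),
\end{align*}
and a second use of optimality (the tangent slope equals $-K_n(y)/(y-\xi_n(y))\cdot$, matching the left/right derivatives of $c_n-c_\ell$) removes the $\xi_n'$ term and rearranges to \eqref{eq:ODE_K_n}. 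Absolute continuity of $K_n$ then follows because the right-hand side of \eqref{eq:ODE_K_n} is locally integrable (using absolute continuity of $K_\ell$ from the induction hypothesis and local boundedness of $1/(y-\xi_n(y))$ away from the diagonal), together with continuity of $\xi_n$ and a standard argument that a continuous function with an a.e.\ derivative that is locally $L^1$, patched across the countably many $\jmath_n$-jumps, is absolutely continuous; monotonicity ($K_n$ non-increasing) follows since $K_n(y)=c^n(\xi_n(y),y)/(y-\xi_n(y))\geq 0$ and, comparing with the Az\'ema--Yor bound \eqref{eq:lower_bound_for_xi_n}--\eqref{eq:AY_minimization}, $K_n$ is squeezed between $0$ and $\mu_n^{\mathrm{HL}}([y,\infty))$, plus a direct sign check on \eqref{eq:ODE_K_n} using $c_{\jmath_n}\leq c_n$ and $K_{\jmath_n}$ non-increasing.

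For \eqref{eq:ODE_K_n_2}, when $\xi_n(y)=y$ one has from the extension formula \eqref{eq:extension_c_n} that $c_n(y)=c_{\imath_n(y;y)}(y)$ and $K_n(y)= -c_n'(y-)+c_{\imath_n(y;y)}'(y-)+K_{\imath_n(y;y)}(y)$; I would take right limits $\tilde y\downarrow y$ in \eqref{eq:definition_Kn}, noting $\xi_n(\tilde y)\to y$ by continuity and $\jmath_n(\tilde y)\to$ the relevant index, and use that $\big(c_n(\xi_n(\tilde y))-c_{\jmath_n(\tilde y)}(\xi_n(\tilde y))\big)/(\tilde y - \xi_n(\tilde y))$ has a limit controlled by the one-sided derivatives of $c_n-c_{\jmath_n}$, to conclude the two sides agree in the limit.

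The main obstacle I expect is the rigorous justification of differentiating through the $\argmin$ in \eqref{eq:definition_xi_n_tangent_interpretation}, i.e.\ establishing that $y\mapsto \xi_n(y)$ is itself differentiable (or at least of locally bounded variation with the envelope-theorem identity holding a.e.) so that the cancellation of the $\xi_n'(y)$ terms is legitimate. Here Assumption \ref{ass:unicity_minimizers}(ii)---uniqueness of the minimizer---is essential: it upgrades continuity of $\xi_n$ (Lemma \ref{lem:continuity_c_power_n}) to enough regularity to run an envelope argument, and it is exactly what rules out the pathological plateaux that would break the ODE. A secondary technical point is bookkeeping at the countable set of $y$ where $\jmath_n$ jumps or where $\xi_n(y)$ touches some $\xi_j(y)$: across such points the identity \eqref{eq:definition_Kn} changes which index $\ell$ appears, and one must check that $K_n$ remains continuous there (which it does, being $c^n(\xi_n(y),y)/(y-\xi_n(y))$ with $c^n$ continuous by Lemma \ref{lem:continuity_c_power_n}), so these points do not contribute jumps and the a.e.\ ODE integrates back to $K_n$.
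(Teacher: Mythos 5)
Your overall strategy (induction, differentiate the identity $(y-\xi_n(y))K_n(y)=c^n(\xi_n(y),y)+(y-\xi_n(y))K_{\jmath_n(y)}(y)$ and cancel the $\xi_n'$ terms by first-order optimality) is the same as the paper's, but the proposal has genuine gaps at exactly the points where the paper's appendix does its real work. First, your route to the opening claim is circular: you derive the a.e.\ ODE and then invoke ``continuous $+$ a.e.\ derivative in $L^1_{\mathrm{loc}}$ $\Rightarrow$ absolutely continuous'', which is false (the Cantor function is a counterexample), and neither the sign of the a.e.\ ODE nor being squeezed between $0$ and $\mu_n^{\mathrm{HL}}([y,\infty))$ yields monotonicity without prior absolute continuity. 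The paper instead proves, by a separate induction, that $y\mapsto c^n(x,y)$ is non-decreasing and locally Lipschitz, whence $K_n$ is non-increasing (since $y\mapsto c^n(x,y)/(y-x)$ is non-increasing and $K_n$ is an infimum of such maps) and locally Lipschitz via $K_n(y)\leq c^n(\xi_n(y+\delta),y+\delta)/(y-\xi_n(y+\delta))$; this also explains why absolute continuity and monotonicity hold without the embedding hypothesis, which in your scheme they would appear to need.

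Second, the two steps you flag but do not carry out are precisely the crux. (a) Your reduction to ``$\jmath_n$ locally constant off a null set'' is unjustified: $\jmath_n$ changes where $\xi_n$ meets some $\xi_k$, and the coincidence set $\{y:\xi_n(y)=\xi_k(y)\}$ is merely closed, so its boundary can in principle have positive Lebesgue measure; countability of the jumps of $\imath_n(x;\cdot)$ in $y$ for fixed $x$ does not transfer to $\jmath_n(y)=\imath_n(\xi_n(y);y)$. The paper handles such $y$ by passing to a subsequence $\delta_m\downarrow 0$ along which $\jmath_n(y+\delta_m)$ is constant and using the boundary-crossing identity \eqref{eq:relations_boundary_crossing} to reconcile the two indices. (b) The envelope step needs the first-order condition \eqref{eq:optimality_xi}, whose proof is where the hypothesis that the first $n-1$ marginals are embedded actually enters (to rule out $\xi_k'=0$ on sets of positive measure), and it fails verbatim when $c^n(\cdot,y)$ has a kink at $\xi_n(y)$; the paper's substitute there is the separate tangent argument showing $\xi_n'(y+)=0$ when the supporting tangent through $(y,0)$ does not match the right-derivative, cf.\ \eqref{eq:constant_xi_n}, together with the derivative formula \eqref{eq:ODE_c_n}. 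Without (a), (b) and the prior Lipschitz/monotonicity input for $c^n$, the cancellation you describe cannot be made rigorous, so the proposal as it stands does not prove \eqref{eq:ODE_K_n}; the treatment of \eqref{eq:ODE_K_n_2} is also only sketched, where the paper uses monotonicity of $\xi_n$ to localise the infimum to $\zeta\in[y,y+)$ and read off $K_n(y+)=K_{\jmath_n(y)}(y+)$.
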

\begin{proof}
The proof is reported in the Appendix \ref{sec:appendix_1}.
\end{proof}

\section{Proof of the Main Result}
\label{sec:iAY}

In this Section we prove the main result, Theorem \ref{thm:main_result}.
The key step is the identification of the distribution of the maximum, cf. Proposition \ref{prop:Maximizing the Maximum}.
%, for which there is equality in \eqref{eq:lem:Trajectorial_Inequality_Unordered_Case}. Thus, we want to prove that there is \enquote{no duality gap}.

Let $n \in \N$. For convenience we set 
\begin{align}
M_0 := 0, \qquad M_i := B_{\tau_i}, \qquad i=1,\dots,n,
\end{align}
where $\tau_i$ is defined in Definition \ref{def:iterated_AY}.

\subsection{Basic Properties of the Embedding}
 
Our first result shows that there is a \enquote{strong relation} between $M$ and $\bar{M}$. 
 
\begin{Lemma}[Relations Between $M$ and $\bar{M}$]
\label{lem:Key_Properties_of_the_Stopping_Rule}
Let $n\in \N$ and let Assumption \ref{ass:unicity_minimizers} hold. 
%It follows directly from Definition \ref{def:iterated_AY} that
%\begin{align}
%M_i \leq \xi_i(\bar{M}_i), \quad i=1,\dots,n.
%\end{align}
Then the following implications hold. 
\begin{align}
&M_n   >  \xi_n(y)  \quad \Longrightarrow \quad \bar{M}_{n}  \geq y,   \label{eq:impl5} \\
&M_n   \geq  \xi_n(y)  \quad \Longrightarrow \quad \bar{M}_{n}  \geq y  \quad \text{if $\xi_n$ is strictly increasing at $y$.} \label{eq:impl3} 
\end{align}
For $y \geq 0$ such that $\jmath_n(y) \neq 0$ we have
\begin{alignat}{7}
&M_{\jmath_n(y)} \geq \xi_{n}(y) > \xi_{\jmath_n(y)}(y) &&  \quad && \Longrightarrow && \qquad && M_n   \geq  \xi_n(y),  \label{eq:impl1} \\
&\bar{M}_{\jmath_n(y)} < y,\hspace{2mm} \bar{M}_{n} \geq y && \quad && \Longrightarrow && \qquad && M_n   \geq  \xi_n(y),  \label{eq:impl2} \\
&\bar{M}_{\jmath_n(y)} \geq y,\hspace{2mm} M_{\jmath_n(y)} < \xi_{n}(y) &&  \quad && \Longrightarrow && \qquad && M_{n}  <  \xi_{n}(y). \label{eq:impl4} 
\end{alignat}
If $\xi_n$ is strictly increasing at $y \geq 0$ and $\jmath_n(y) = 0$ then the following holds. 
\begin{align}
M_n \geq \xi_n(y) \quad \Longleftrightarrow \quad \bar{M}_n \geq y .
\label{eq:equivalence_M_M_bar}
\end{align}
\end{Lemma}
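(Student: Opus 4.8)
The plan is to reduce every implication to two ingredients: the elementary path properties of the embedding of Definition~\ref{def:iterated_AY}, and a one-step induction in the index $k$ between $\ell:=\jmath_n(y)$ and $n$, powered by the ordering of the stopping boundaries. Throughout, all statements are understood on $\{\tau_n<\infty\}$.

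\textbf{Preliminaries.} Each $\xi_k$ is non-decreasing (Lemma~\ref{lem:Monotonicity_of_Stopping_Boundaries}), right-continuous ($\xi_k$ is continuous for $k\ge2$ by Lemma~\ref{lem:continuity_c_power_n}, while $\xi_1=b_1^{-1}$ is a right-continuous inverse), and satisfies $\xi_k(z)\le z$. By the definition of $\jmath_n$, writing $\ell=\jmath_n(y)$, one has $\xi_\ell(y)<\xi_n(y)\le\xi_k(y)$ for all $\ell<k\le n-1$. From Definition~\ref{def:iterated_AY}: $\tau_k>\tau_{k-1}$ precisely when $M_{k-1}>\xi_k(\bar M_{k-1})$; in that case, since $B\ge\xi_k(\bar B)$ whenever $B=\bar B$, the stop occurs either with $M_k=\bar M_k=\xi_k(\bar M_k)$ or inside an excursion of $B$ strictly below $\bar B$, on which $\bar B$ is constant, so continuity of $B$ and right-continuity of $\xi_k$ force $M_k=\xi_k(\bar M_k)$; and if $\tau_k=\tau_{k-1}$ then $M_k=M_{k-1}$, $\bar M_k=\bar M_{k-1}$. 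Hence always $M_k\le\xi_k(\bar M_k)$, and $k\mapsto\bar M_k$ is non-decreasing. Now \eqref{eq:impl5} follows by contraposition: if $\bar M_n<y$ then $M_n\le\xi_n(\bar M_n)\le\xi_n(y-)\le\xi_n(y)$; and if in addition $\xi_n$ is strictly increasing at $y$ the last inequality is strict, which is the contrapositive of \eqref{eq:impl3}.

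\textbf{The implications with $\ell\ge1$.} The engine is a propagation step and its dual. \emph{Upward}: if $M_k\ge\xi_n(y)$ and $\bar M_k\ge y$ for some $\ell\le k<n$, then $M_{k+1}\ge\xi_n(y)$ and $\bar M_{k+1}\ge y$; this is immediate if $\tau_{k+1}=\tau_k$, and if $\tau_{k+1}>\tau_k$ then $M_{k+1}=\xi_{k+1}(\bar M_{k+1})$ with $\bar M_{k+1}\ge\bar M_k\ge y$, whence $M_{k+1}\ge\xi_{k+1}(y)\ge\xi_n(y)$ for $k+1\le n-1$ and $M_{k+1}=\xi_n(\bar M_{k+1})\ge\xi_n(y)$ for $k+1=n$. \emph{Downward}: if $M_k<\xi_n(y)$ and $\bar M_k\ge y$ for some $\ell\le k<n$, then $M_k<\xi_n(y)\le\xi_{k+1}(y)\le\xi_{k+1}(\bar M_k)$ when $k+1\le n-1$, resp.\ $M_{n-1}<\xi_n(y)\le\xi_n(\bar M_{n-1})$ when $k+1=n$; hence $\tau_{k+1}=\tau_k$ and $M_{k+1}=M_k<\xi_n(y)$. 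Consequently: \eqref{eq:impl4} follows by iterating the downward step from $k=\ell$ (using $\bar M_k\ge\bar M_\ell\ge y$); \eqref{eq:impl1} because $M_\ell\le\xi_\ell(\bar M_\ell)$ together with $M_\ell\ge\xi_n(y)>\xi_\ell(y)$ gives $\xi_\ell(\bar M_\ell)>\xi_\ell(y)$, hence $\bar M_\ell>y$, after which one iterates the upward step from $k=\ell$; and \eqref{eq:impl2} because $\bar M_\ell<y\le\bar M_n$ yields a least $m\in\{\ell+1,\dots,n\}$ with $\bar M_m\ge y$, and then $\bar M_{m-1}<y$ forces $\tau_m>\tau_{m-1}$, so $M_m=\xi_m(\bar M_m)\ge\xi_m(y)\ge\xi_n(y)$ if $m\le n-1$ (resp.\ $M_n=\xi_n(\bar M_n)\ge\xi_n(y)$ if $m=n$), after which, if $m<n$, one iterates the upward step from $k=m$.

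\textbf{The equivalence \eqref{eq:equivalence_M_M_bar}, and the hard part.} Here $\ell=0$, so $\xi_k(y)\ge\xi_n(y)$ for every $1\le k\le n-1$, and $\xi_n$ is strictly increasing at $y$. The implication $M_n\ge\xi_n(y)\Rightarrow\bar M_n\ge y$ is precisely \eqref{eq:impl3}. For the converse one reruns the argument of \eqref{eq:impl2} with $\ell$ replaced by $0$: if $\bar M_n\ge y$ and $y>0$, take the least $m\in\{1,\dots,n\}$ with $\bar M_m\ge y$, note $\tau_m>\tau_{m-1}$ and $M_m=\xi_m(\bar M_m)\ge\xi_m(y)\ge\xi_n(y)$ (directly $M_n\ge\xi_n(y)$ if $m=n$), and iterate the upward step; if $y=0$, start instead from $M_1\ge\xi_1(0)\ge\xi_n(0)$ — which holds whether or not $\tau_1=0$ — together with $\bar M_1\ge0$. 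The only genuinely delicate points are the path identity $M_k=\xi_k(\bar M_k)$ at an active stop, which requires minor care around a possible discontinuity of $\xi_1$ (handled as in \citet{Brown98themaximum}, using Remark~\ref{rem:xi_1_discontinuous}), and keeping the index edge-cases $m=n$ and $\ell=n-1$ straight in the inductions; the remainder is routine monotonicity bookkeeping. The conceptual heart is the observation that, for $\jmath_n(y)<k\le n-1$, the boundary $\xi_k$ lies \emph{above} $\xi_n$ at level $y$, which is exactly what makes both propagation steps work.
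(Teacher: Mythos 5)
Your proposal is correct and follows essentially the same route as the paper: both rest on the facts that $M_k\le\xi_k(\bar M_k)$ with equality at an active stop, that $\xi_{\jmath_n(y)}(y)<\xi_n(y)\le\xi_k(y)$ for $\jmath_n(y)<k\le n-1$, and on monotonicity/continuity of the boundaries, with your upward/downward propagation steps simply making explicit the paper's verbal argument that the process either stays put or moves to a boundary point $\xi_i(y')\ge\xi_n(y)$ with $y'\ge y$. The only quibble is in \eqref{eq:impl3}: since $\xi_n$ is continuous for $n\ge2$ the inequality $\xi_n(y-)\le\xi_n(y)$ cannot be strict; the strictness should instead be placed at $\xi_n(\bar M_n)<\xi_n(y)$, which follows from $\bar M_n<y$ and strict increase of $\xi_n$ at $y$ — a harmless rewording, not a gap.
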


\begin{proof}
Write $\jmath = \jmath_n$. 
We have  
\begin{align*}
\xi_{\jmath(y)}(y) < \xi_n(y) \leq \xi_i(y), \quad i=\jmath(y)+1,\dots,n .
\label{eq:locally_ordered_boundaries}
\end{align*}

In the following we are using continuity and monotonicity of $\xi_1,\dots,\xi_n$, cf. Lemma \ref{lem:continuity_c_power_n} and \ref{lem:Monotonicity_of_Stopping_Boundaries}.

\textit{Case $\jmath(y) \neq 0$.}
As for implication \eqref{eq:impl5} assume that $M_n > \xi_n(y)$ and $\bar{M}_{n} < y$ holds. In this case $M_n$ cannot be at the boundary $\xi_n$. It has to be at a boundary point $\xi_j(y')$ for some $j<n$ and some $y'<y$. However, this cannot be true because $\xi_n(y') \leq \xi_n(y) <  \xi_j(y')$ and hence case \eqref{eq:embedding1} of the definition of $\tau_n$ would have been triggered.  

Implication \eqref{eq:impl3} follows by the same arguments as for implication \eqref{eq:impl5}.

Implication \eqref{eq:impl1} now follows from implication \eqref{eq:impl5} applied for $\jmath(y)$ and the fact that either $M_n = M_{\jmath(y)}$ (case \eqref{eq:embedding3}) or $M$ moves to a point at the boundary $\xi_i(y') \geq \xi_n(y)$ for some $i=\jmath(y)+1,\dots,n$, $y' \geq y$ (case \eqref{eq:embedding1}).

Implication \eqref{eq:impl2} holds because if $M$ increases its maximum at time $\jmath(y)$, which is $< y$, to some $y' \geq y$ at time $n$, it will hit a boundary point $\xi_i(y') \geq \xi_n(y)$ for some $i=\jmath(y)+1,\dots,n$.

Implication \eqref{eq:impl4} holds because from $\bar{M}_{\jmath(y)} \geq y$ and $M_{\jmath(y)} < \xi_{n}(y)$ it follows that $M_{\jmath(y)}=\xi_i(y')<\xi_n(y)\leq \xi_j(y') $ for some $i \leq \jmath(y)$, $y' \geq y$, $j>\jmath(y)$. From this it follows that $M$ will stay where it is until time $n$, cf. case \eqref{eq:embedding3}.

\textit{Case $\jmath(y) = 0$.}
The condition $M_n \geq \xi_n(y)$ implies in a similar fashion as in  \eqref{eq:impl3} that $\bar{M}_n \geq y$ holds.
Conversely, assume that $\bar{M}_n \geq y$ holds. In this case $M_n$ must be at a boundary point $\xi_i(y') \geq \xi_n(y)$ for some $i=1,\dots,n$, $y' \geq y$.
\end{proof}

As an application of Lemma \ref{lem:Key_Properties_of_the_Stopping_Rule} we obtain the following result.

\begin{Lemma}[Contributions to the Maximum]
\label{lem:Contributions to the Maximum}
Let $n\in \N$ and let Assumption \ref{ass:unicity_minimizers} hold. 
Assume $\xi_n$ is strictly increasing at $y \geq 0$. 

Then, if $\jmath_n(y) \neq 0$ 
\begin{align}
\PP \left[ \bar{M}_n \geq y \right]  = \PP \left[ M_n \geq \xi_n(y) \right] - \PP\left[ M_{\jmath_n(y)} \geq \xi_n(y) \right] + \PP\left[ \bar{M}_{\jmath_n(y)} \geq y \right] %K_{\jmath(y)}(y).
%\label{eq:decomposing_the _maximum}
\end{align}
and if $\jmath_n(y) = 0$ 
\begin{align}
\PP \left[ \bar{M}_n \geq y \right]  = \PP \left[ M_n \geq \xi_n(y) \right].
\end{align}
\end{Lemma}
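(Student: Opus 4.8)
The plan is to derive both identities directly from the implications established in Lemma~\ref{lem:Key_Properties_of_the_Stopping_Rule}, using that $\xi_n$ is strictly increasing at $y$, which upgrades \eqref{eq:impl5} to the equivalence-type statement \eqref{eq:impl3}. Throughout, write $\jmath=\jmath_n(y)$ and note that $M_n=B_{\tau_n}$, $M_\jmath = B_{\tau_\jmath}$, $\bar M_n = \bar B_{\tau_n}$, $\bar M_\jmath = \bar B_{\tau_\jmath}$, and that $\tau_\jmath \le \tau_n$ so $\bar M_\jmath \le \bar M_n$ pathwise.

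\medskip\noindent\textbf{Case $\jmath_n(y)=0$.} Here the claim is immediate: \eqref{eq:equivalence_M_M_bar} states that under strict increase of $\xi_n$ at $y$ and $\jmath_n(y)=0$ we have $\{M_n \ge \xi_n(y)\} = \{\bar M_n \ge y\}$ up to a null set, so taking probabilities gives $\PP[\bar M_n \ge y] = \PP[M_n \ge \xi_n(y)]$.

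\medskip\noindent\textbf{Case $\jmath_n(y)\neq 0$.} The idea is to decompose $\{\bar M_n \ge y\}$ according to whether the maximum $y$ was already reached by time $\tau_\jmath$ or not, i.e. write
\begin{align*}
\PP[\bar M_n \ge y] = \PP[\bar M_n \ge y,\ \bar M_\jmath \ge y] + \PP[\bar M_n \ge y,\ \bar M_\jmath < y] = \PP[\bar M_\jmath \ge y] + \PP[\bar M_n \ge y,\ \bar M_\jmath < y],
\end{align*}
where the last equality uses $\bar M_\jmath \le \bar M_n$. It then remains to show
\begin{align*}
\PP[\bar M_n \ge y,\ \bar M_\jmath < y] = \PP[M_n \ge \xi_n(y)] - \PP[M_\jmath \ge \xi_n(y)].
\end{align*}
For this I would establish that, up to null sets,
\begin{align*}
\{M_n \ge \xi_n(y)\} = \{\bar M_n \ge y,\ \bar M_\jmath < y\} \ \sqcup\ \{M_\jmath \ge \xi_n(y),\ \bar M_\jmath \ge y\},
\end{align*}
a disjoint union, which upon taking probabilities and noting $\{M_\jmath \ge \xi_n(y)\} \subseteq \{\bar M_\jmath \ge y\}$ (from \eqref{eq:impl3} applied at index $\jmath$, since $\xi_n(y) > \xi_\jmath(y)$ and $M_\jmath \ge \xi_n(y)$ forces $M_\jmath > \xi_\jmath(y)$, whence by the strict-increase form $\bar M_\jmath \ge y$ — alternatively one argues $M_\jmath \ge \xi_n(y) > \xi_\jmath(y)$ directly triggers reaching level $y$ as in the proof of \eqref{eq:impl1}) yields exactly the desired identity, since then $\PP[M_\jmath \ge \xi_n(y),\ \bar M_\jmath \ge y] = \PP[M_\jmath \ge \xi_n(y)]$.

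\medskip To prove the set identity: the inclusion $\{\bar M_n \ge y, \bar M_\jmath < y\} \subseteq \{M_n \ge \xi_n(y)\}$ is precisely implication \eqref{eq:impl2}. The inclusion $\{M_\jmath \ge \xi_n(y), \bar M_\jmath \ge y\} \subseteq \{M_n \ge \xi_n(y)\}$ is implication \eqref{eq:impl1} (whose hypothesis $M_\jmath \ge \xi_n(y) > \xi_\jmath(y)$ holds since $\xi_n(y)>\xi_{\jmath}(y)$). For the converse inclusion and disjointness, suppose $M_n \ge \xi_n(y)$. If $\bar M_\jmath < y$, then since $\bar M_n \ge M_n \ge \xi_n(y)$ is not directly enough, I instead use \eqref{eq:impl3}: $M_n \ge \xi_n(y)$ with $\xi_n$ strictly increasing at $y$ gives $\bar M_n \ge y$, so this point lies in the first set. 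If $\bar M_\jmath \ge y$, I must show $M_\jmath \ge \xi_n(y)$; this is the contrapositive of \eqref{eq:impl4}, which says $\bar M_\jmath \ge y$ together with $M_\jmath < \xi_n(y)$ forces $M_n < \xi_n(y)$, contradicting $M_n \ge \xi_n(y)$. Disjointness of the two pieces is clear since one has $\bar M_\jmath < y$ and the other $\bar M_\jmath \ge y$. The main obstacle, and the place requiring care, is handling the boundary/null-set issues when $\xi_n$ is only strictly increasing (not continuous, and ties $M_n = \xi_n(y)$ possible): one must check that the implications of Lemma~\ref{lem:Key_Properties_of_the_Stopping_Rule} are applied on the correct (closed vs.\ open) inequalities and that the few ambiguous boundary configurations occur with probability zero, which follows from $B$ having no intervals of constancy and the continuity of the marginals away from atoms — in fact the clean equivalences \eqref{eq:impl3} and \eqref{eq:equivalence_M_M_bar} are tailored exactly to absorb this.
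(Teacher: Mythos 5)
Your proposal is correct and takes essentially the same route as the paper: both cases rest entirely on the implications \eqref{eq:impl5}--\eqref{eq:equivalence_M_M_bar} of Lemma \ref{lem:Key_Properties_of_the_Stopping_Rule}, and your disjoint-union identity for $\{M_n \geq \xi_n(y)\}$ is just a rearrangement of the paper's two probability computations. One small citation fix: the inclusion $\{M_{\jmath_n(y)} \geq \xi_n(y)\} \subseteq \{\bar{M}_{\jmath_n(y)} \geq y\}$ follows from \eqref{eq:impl5} applied at index $\jmath_n(y)$ (the strict inequality $M_{\jmath_n(y)} \geq \xi_n(y) > \xi_{\jmath_n(y)}(y)$ makes it applicable, with no strict-increase assumption on $\xi_{\jmath_n(y)}$ needed), rather than from \eqref{eq:impl3}, exactly as your alternative argument already indicates.
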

\begin{proof}
Write $\jmath = \jmath_n$. 

\textit{Case $\jmath(y) \neq 0$.}
Firstly, let us compute
\begin{align*}
 & \quad \PP \left[ \bar{M}_n \geq y \right] - \PP \left[ M_n \geq \xi_n(y) \right] \\ \stackrel{\mathclap{\eqref{eq:impl3}}}{=} & \quad \PP \left[ \bar{M}_n \geq y \right] - \PP \left[ M_n \geq \xi_n(y),\bar{M}_n \geq y \right] = \PP \left[ \bar{M}_n \geq y , M_n < \xi_n(y) \right] \\
=& \quad \PP \left[ \bar{M}_n \geq y , M_n < \xi_n(y), \bar{M}_{\jmath(y)} \geq y \right] + \PP \left[ \bar{M}_n \geq y , M_n < \xi_n(y), \bar{M}_{\jmath(y)} < y \right]  \nonumber	\\
\stackrel[ \mathclap{\eqref{eq:impl2}} ]{\mathclap{\eqref{eq:impl1}}}{=} & \quad	\PP \left[  M_n < \xi_n(y), \bar{M}_{\jmath(y)} \geq y, M_{\jmath(y)} < \xi_n(y) \right].
\end{align*}

Secondly, let us compute
\begin{align*}
 & \quad \PP\left[ \bar{M}_{\jmath(y)} \geq y \right] - \PP\left[ M_{\jmath(y)} \geq \xi_n(y) \right] \\ 
=& \quad \PP \left[  \bar{M}_{\jmath(y)} \geq y, M_{\jmath(y)} \geq \xi_n(y) \right]  +  \PP \left[  \bar{M}_{\jmath(y)} \geq y, M_{\jmath(y)} <    \xi_n(y) \right]  - \PP \left[   M_{\jmath(y)} \geq  \xi_n(y) \right] \\
 \stackrel{\mathclap{\eqref{eq:impl5}}}{=} &  \quad \PP \left[  \bar{M}_{\jmath(y)} \geq y, M_{\jmath(y)} <    \xi_n(y) \right]    \stackrel{\eqref{eq:impl4}}{=} \PP \left[  M_n < \xi_n(y), \bar{M}_{\jmath(y)} \geq y, M_{\jmath(y)} < \xi_n(y) \right].
				%\label{eq:rhs}			
\end{align*}
Comparing these two equations yields the claim.

\textit{Case $\jmath(y) = 0$.} 
The claim follows from \eqref{eq:equivalence_M_M_bar}.
\end{proof}

\subsection{Law of the Maximum}

Our next goal is to identify the distribution of $M_n$. We will achieve this by deriving an ODE for  $\PP\left[ \bar{M}_n \geq \cdot \right]$ using excursion theoretical results, cf. Lemma \ref{lem:ODE_for_the_Maximum}, and link it to the ODE satisfied by $K_n$, cf. Lemma \ref{lem:ODE_K_n}.

\begin{Lemma}[ODE for the Maximum] %$\PP\left[ \bar{M}_n \geq \cdot \right]$
\label{lem:ODE_for_the_Maximum}
Let $n\in \N$ and let Assumption \ref{ass:unicity_minimizers} hold. 
Then the mapping
\begin{align*}
y \mapsto \PP \left[ \bar{M}_n \geq y \right]
\end{align*}
is absolutely continuous and for almost all $y \geq 0$ we have:

If $\xi_n(y)<y$ then
\begin{align}
\frac{\partial \PP \left[ \bar{M}_{n} \geq y \right]}{\partial y}  +  \frac{\PP \left[ \bar{M}_n \geq y \right]}{y-\xi_n(y)}  
= \frac{\PP \left[ \bar{M}_{\jmath_n(y)} \geq y \right]}{y-\xi_n(y)} +  \restr{\frac{\partial \PP \left[ \bar{M}_{j} \geq y \right]}{\partial y}}{j=\jmath_n(y)}.
\label{eq:ODE_law_maximum}
\end{align}

If $\xi_n(y)=y$ then
\begin{align}
\PP \left[ \bar{M}_{n} > y \right] = \PP \left[ \bar{M}_{\jmath_n(y)} > y \right].
\label{eq:ODE_law_maximum_2}
\end{align}
\end{Lemma}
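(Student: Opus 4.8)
The plan is to derive the ODE for $y\mapsto \PP[\bar M_n\ge y]$ by combining the decomposition from Lemma \ref{lem:Contributions to the Maximum} with an excursion-theoretic analysis of the embedding $\tau_n$. First I would reduce the problem to computing $\PP[M_n\ge\xi_n(y)]$, because Lemma \ref{lem:Contributions to the Maximum} already expresses $\PP[\bar M_n\ge y]$ in terms of $\PP[M_n\ge\xi_n(y)]$, $\PP[M_{\jmath_n(y)}\ge\xi_n(y)]$ and $\PP[\bar M_{\jmath_n(y)}\ge y]$ (at points where $\xi_n$ is strictly increasing, which is a.e.\ by Lemma \ref{lem:Monotonicity_of_Stopping_Boundaries} together with absolute continuity to be established). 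Since $B_{\tau_n}\sim\mu_n$ is \emph{not} yet known — that is part of Theorem \ref{thm:main_result}, proved by induction — I would instead note that $M_n=B_{\tau_n}$ and $\xi_n(y)$ together satisfy the strong relations of Lemma \ref{lem:Key_Properties_of_the_Stopping_Rule}, so $\PP[M_n\ge\xi_n(y)]$ can be read off from $\PP[\bar M_n\ge y]$ and $\PP[\bar M_{\jmath_n(y)}\ge y]$ via those same implications; the genuine new input must come from an excursion argument.

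The core step is the excursion computation. Fix $y$ where all $\xi_1,\dots,\xi_n$ are continuous and strictly increasing at $y$ (a.e.\ $y$), and consider the process $(B_t,\bar B_t)$ on the time interval $[\tau_{n-1},\tau_n]$. On the event that $\bar B$ is near $y$, the process $B$ performs excursions below the running maximum, and $\tau_n$ is the first time $B$ falls to the current boundary value $\xi_n(\bar B_t)$ — unless it has already been stopped by an earlier boundary. Using the Itô excursion measure / local time at the maximum (as in the classical \AY\ and \citet{Brown98themaximum} analyses), the probability that $\bar M_n$ exceeds level $y+\dd y$ given it exceeds $y$ is governed by the ratio of the "gap" $y-\xi_n(y)$ between the maximum and the boundary: an excursion started from maximum level $y$ survives to push the maximum past $y$ with a probability whose infinitesimal version yields a term $\tfrac{1}{y-\xi_n(y)}\PP[\bar M_n\ge y]\,\dd y$, exactly as in the one-marginal case. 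The correction terms on the right-hand side of \eqref{eq:ODE_law_maximum} account for the mass that entered via the $\jmath_n(y)$-th marginal: paths which had $\bar M_{\jmath_n(y)}<y$ contribute through the boundary $\xi_n$, while those with $\bar M_{\jmath_n(y)}\ge y$ are already accounted for, and differentiating the identity of Lemma \ref{lem:Contributions to the Maximum} in $y$ — using that $\PP[M_n\ge\xi_n(y)]-\PP[M_{\jmath_n(y)}\ge\xi_n(y)]$ is, by the excursion balance, precisely $\tfrac{1}{y-\xi_n(y)}(\PP[\bar M_{\jmath_n(y)}\ge y]-\PP[\bar M_n\ge y])$ plus lower-order terms — produces \eqref{eq:ODE_law_maximum}. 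Absolute continuity of $y\mapsto\PP[\bar M_n\ge y]$ follows because it is a monotone function dominated, via the excursion estimate, by an absolutely continuous one (or directly from the integrated form of the balance equation).

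For the jump case $\xi_n(y)=y$: here the boundary touches the diagonal, meaning that at maximum level $y$ the embedding $\tau_n$ stops \emph{immediately} upon reaching level $y$ from below with no room for a further excursion, so the maximum cannot strictly exceed $y$ via the $n$-th step alone — any path with $\bar M_n>y$ must already have had $\bar M_{\jmath_n(y)}>y$, and conversely $\bar M_{\jmath_n(y)}>y$ forces $\bar M_n>y$ since $\tau_n\ge\tau_{\jmath_n(y)}$. This gives \eqref{eq:ODE_law_maximum_2}. I expect the main obstacle to be making the excursion-measure bookkeeping rigorous in the presence of the multiple boundaries $\xi_1,\dots,\xi_{n-1}$ below $\xi_n$: one must argue that, locally near a maximum level $y$ with $\jmath_n(y)=\ell$, only the boundary $\xi_\ell$ is "active" among the lower boundaries (which is where the strong relations \eqref{eq:impl1}--\eqref{eq:impl4} and the continuity/monotonicity Lemmas \ref{lem:continuity_c_power_n}--\ref{lem:Monotonicity_of_Stopping_Boundaries} do the work), so that the excursion computation reduces cleanly to a two-boundary problem indexed by $n$ and $\ell$, mirroring \citet{Brown98themaximum}. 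Handling the null set of $y$'s where $\xi_1$ (or some $\xi_k$) is discontinuous or where $\xi_n$ fails to be strictly increasing requires a separate, routine argument using monotonicity and the already-established absolute continuity.
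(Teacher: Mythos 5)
There is a genuine gap at the heart of your argument. You propose to obtain \eqref{eq:ODE_law_maximum} by differentiating the identity of Lemma \ref{lem:Contributions to the Maximum} in $y$, ``using that $\PP[M_n\ge\xi_n(y)]-\PP[M_{\jmath_n(y)}\ge\xi_n(y)]$ is, by the excursion balance, precisely $\tfrac{1}{y-\xi_n(y)}\bigl(\PP[\bar M_{\jmath_n(y)}\ge y]-\PP[\bar M_n\ge y]\bigr)$ plus lower-order terms.'' As written this identity is false: Lemma \ref{lem:Contributions to the Maximum} itself says that this difference equals $\PP[\bar M_n\ge y]-\PP[\bar M_{\jmath_n(y)}\ge y]$ (no factor $1/(y-\xi_n(y))$, opposite sign). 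What your route actually needs is that the \emph{$y$-derivative} of $\PP[M_n\ge\xi_n(y)]-\PP[M_{\jmath_n(y)}\ge\xi_n(y)]$ equals $\tfrac{1}{y-\xi_n(y)}\bigl(\PP[\bar M_{\jmath_n(y)}\ge y]-\PP[\bar M_n\ge y]\bigr)$, and this is exactly the analytic content of the lemma -- you assert it but never derive it. Worse, the route is essentially circular: Lemma \ref{lem:Contributions to the Maximum} is an identity between the same unknown quantities, so differentiating it yields nothing new unless you can compute the derivative of the stopped-value term independently; but $\PP[M_n\ge\xi_n(y)]$ involves the law of $M_n$ composed with $\xi_n$, and neither the law of $M_n$ (this is what Proposition \ref{prop:embedding} establishes \emph{later}, using the present lemma) nor $\xi_n'$ is available at this stage. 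The paper avoids this entirely: it uses the elementary decomposition $\PP[\bar M_n\ge y]=\PP[\bar M_{\jmath_n(y)}\ge y]+\PP[\bar M_n\ge y,\bar M_{\jmath_n(y)}<y]$ (equation \eqref{eq:trivial_decomposition_max_n}) and computes the derivative of the mixed term directly, by sandwiching its increment between two exponential survival factors $\exp\bigl(-\int_y^{y+\delta}\frac{\dd r}{r-\bar\xi_{\jmath(y)}(r)}\bigr)$ and $\exp\bigl(-\int_y^{y+\delta}\frac{\dd r}{r-\underline\xi_{\jmath(y)}(r)}\bigr)$ as in \eqref{eq:right_derivative_lower_upper_bound}, which converge by continuity of the boundaries; the ODE then follows from the induction hypothesis applied to $\PP[\bar M_{\jmath_n(y)}\ge\cdot]$, and Lemma \ref{lem:Contributions to the Maximum} is only used later, in the embedding step.

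Two further points. First, your sketch does not treat the case $\jmath_n(y)=0$, where a genuinely new term appears: mass newly stopped on $\xi_1$ at maximum levels in $[y,y+\delta]$ lies \emph{above} $\xi_n$ there, continues towards $\xi_n$, and may push the maximum further; the paper's proof has the explicit inflow term $\int_y^{y+\delta}\PP[\bar M_1\in\dd s]\,\frac{(\xi_1(s)-\underline\xi_1(s))^+}{s-\underline\xi_1(s)}\exp(\cdots)$ to account for this, and it is needed to close the induction. Second, your justification of absolute continuity (``a monotone function dominated by an absolutely continuous one'') is not valid as stated -- domination does not transfer absolute continuity (think of the Cantor function dominated by a constant); the paper instead extracts local Lipschitz continuity of the mixed term from the two-sided excursion bounds. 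Your treatment of the case $\xi_n(y)=y$ is fine and matches the paper's.
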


\begin{proof}
Write $\jmath = \jmath_n$.  
We exclude all $y>0$ which are a discontinuity of $\xi_1$. This is clearly a null-set.

The cases $n=1,2$ are true by \citet{Brown98themaximum}. Assume by induction hypothesis that we have proven the claim for $i=1,\dots,n-1$.

If $\xi_n(y)=y$ then it is clear from the definition of the embedding, cf. Definition \ref{def:iterated_AY}, that  
\begin{align}
\bar{M}_n >y \qquad \Longleftrightarrow \qquad \bar{M}_{\jmath} >y.
\end{align}

\textit{Case $\jmath(y) \neq 0$.}
For $\delta>0$ we have
\begin{alignat}{3}
  &  && \PP\left[ \bar{M}_n \geq y+\delta, \bar{M}_{\jmath(y)} < y+\delta \right] - \PP\left[ \bar{M}_n \geq y , \bar{M}_{\jmath(y)} < y  \right] \nonumber \\
= &  && \PP\left[ \bar{M}_n \geq y+\delta, \bar{M}_{\jmath(y)} < y  \right] - \PP\left[ \bar{M}_n \geq y , \bar{M}_{\jmath(y)} < y  \right] \label{eq:right_derivative_aux} \\
 & + && \underbrace{\PP\left[ \bar{M}_n \geq y+\delta, y < \bar{M}_{\jmath(y)} < y+\delta \right].}_{=0 \text{ for $\delta>0$ small enough by definition of $\jmath(y)$ and continuity of $\xi_i$}} \nonumber
\end{alignat}

%Similarly, we have
%\begin{alignat}{3}
%  &  && \PP\left[ \bar{M}_n > y-\delta, \bar{M}_{\jmath(y)} < y-\delta \right] - \PP\left[ \bar{M}_n > y , \bar{M}_{\jmath(y)} < y  \right] \nonumber \\
%= &  && \PP\left[ \bar{M}_n > y-\delta, \bar{M}_{\jmath(y)} < y-\delta  \right] - \PP\left[ \bar{M}_n > y , \bar{M}_{\jmath(y)} < y-\delta  \right] \label{eq:left_derivative_aux} \\
% & - && \underbrace{\PP\left[ \bar{M}_n > y, y-\delta < \bar{M}_{\jmath(y)} < y \right]}_{=0 \text{ for $\delta>0$ small enough by \eqref{}}}. \nonumber
%\end{alignat}

For $r>0$ we define
\begin{align*}
&\bar{\xi}_{j}(r) := \max_{k:j \leq k \leq n}\left\{\xi_k(r):\xi_k(y)=\xi_n(y) \right\}, \\
&\underline{\xi}_{j}(r) := \min_{k:j \leq k \leq n}\left\{\xi_k(r):\xi_k(y)=\xi_n(y) \right\}
\end{align*}
and note that 
\begin{align}
\bar{\xi}_{\jmath(y)}(r) \to \xi_n(y), \qquad \underline{\xi}_{\jmath(y)}(r) \to  \xi_n(y) \qquad \text{as $r \to y$}
\label{eq:convergence_effective_stopping_boundaries}
\end{align}
by continuity of $\xi_i$ at $y$ for $i=1,\dots,n$.

Let $\delta>0$.
We have by excursion theoretical results, cf. e.g. \citet{rogers89g},
\begin{equation}
\begin{split}
     &\hspace{1mm}\PP\left[ \bar{M}_n \geq y , \bar{M}_{\jmath(y)} < y  \right] \exp \left(- \int_{y}^{y+\delta} \frac{\dd r}{r- \bar{\xi}_{\jmath(y)}(r)} \right) \\
\leq &\hspace{1mm}\PP\left[ \bar{M}_n \geq y+\delta, \bar{M}_{\jmath(y)} < y  \right] \\
\leq &\hspace{1mm}\PP\left[ \bar{M}_n \geq y , \bar{M}_{\jmath(y)} < y  \right] \exp \left(- \int_{y}^{y+\delta} \frac{\dd r}{r- \underline{\xi}_{\jmath(y)}(r)} \right).
\end{split}
\label{eq:right_derivative_lower_upper_bound}
\end{equation}
%and similarly
%\begin{equation}
%\begin{split}
%     &\hspace{1mm}\PP\left[ \bar{M}_n > y-\delta , \bar{M}_{\jmath(y)} < y-\delta  \right] \exp \left(- \int_{y-\delta}^{y} \frac{\dd r}{r- \bar{\xi}_{\jmath(y)}(r)} \right) \\
%\leq &\hspace{1mm}\PP\left[ \bar{M}_n > y, \bar{M}_{\jmath(y)} < y-\delta  \right] \\
%\leq &\hspace{1mm}\PP\left[ \bar{M}_n > y-\delta, \bar{M}_{\jmath(y)} < y-\delta \right] \exp \left(- \int_{y-\delta}^{y} \frac{\dd r}{r- \underline{\xi}_{\jmath(y)}(r)} \right)
%\end{split}
%\label{eq:left_derivative_lower_upper_bound}
%\end{equation}
 
Now we compute for $y$ such that $\xi_n(y)<y$
\begin{alignat}{5}
& && && \frac{\PP\left[ \bar{M}_n \geq y+\delta, \bar{M}_{\jmath(y)} < y+\delta \right] - \PP\left[ \bar{M}_n \geq y , \bar{M}_{\jmath(y)} < y  \right]}{\delta} \nonumber \\
& \quad \stackrel{\mathclap{\eqref{eq:right_derivative_aux},\eqref{eq:right_derivative_lower_upper_bound}}}{\leq } \hspace{10mm}&& && \PP\left[ \bar{M}_n \geq y , \bar{M}_{\jmath(y)} < y  \right] \frac{ \exp \left(- \int_{y}^{y+\delta} \frac{\dd r}{r- \underline{\xi}_{\jmath(y)}(r)} \right)-1 }{\delta} \nonumber \\
&\xrightarrow[\text{as $\delta \downarrow 0$}]{\text{by \eqref{eq:convergence_effective_stopping_boundaries}}}   && - && \frac{ \PP\left[ \bar{M}_n \geq y , \bar{M}_{\jmath(y)} < y  \right] }{y-\xi_n(y)} 
\label{eq:upper_bound_right_derivative}
\end{alignat}
and
\begin{alignat}{5}
& && && \frac{\PP\left[ \bar{M}_n \geq y+\delta, \bar{M}_{\jmath(y)} < y+\delta \right] - \PP\left[ \bar{M}_n \geq y , \bar{M}_{\jmath(y)} < y  \right]}{\delta} \nonumber \\
& \quad \stackrel{\mathclap{\eqref{eq:right_derivative_aux},\eqref{eq:right_derivative_lower_upper_bound}}}{\geq } \hspace{10mm}&& && \PP\left[ \bar{M}_n \geq y , \bar{M}_{\jmath(y)} < y  \right] \frac{ \exp \left(- \int_{y}^{y+\delta} \frac{\dd r}{r- \bar{\xi}_{\jmath(y)}(r)} \right)-1 }{\delta} \nonumber \\
&\xrightarrow[\text{as $\delta \downarrow 0$}]{\text{by \eqref{eq:convergence_effective_stopping_boundaries}}}   && - &&  \frac{\PP\left[ \bar{M}_n \geq y , \bar{M}_{\jmath(y)} < y  \right]}{y-\xi_n(y)} .
\label{eq:lower_bound_right_derivative}
\end{alignat}

Hence, from \eqref{eq:upper_bound_right_derivative} and \eqref{eq:lower_bound_right_derivative} it follows that the right-derivative of 
\begin{align}
y \mapsto \restr{ \PP\left[ \bar{M}_n \geq y , \bar{M}_{j} < y  \right] }{j=\jmath(y)}
\label{eq:mixed_term_1_excursion_proof}
\end{align}
exists. 
Similar arguments for $\delta<0$ show that the left-derivative exists and is the same as the right-derivative.
Local Lipschitz continuity of \eqref{eq:mixed_term_1_excursion_proof} then follows from \eqref{eq:upper_bound_right_derivative} and \eqref{eq:lower_bound_right_derivative}.

Observe the obvious equality
\begin{align}
\PP \left[ \bar{M}_n \geq y \right] = \PP \left[ \bar{M}_{j} \geq y \right] + \PP \left[ \bar{M}_n \geq y, \bar{M}_{j} < y \right]
\label{eq:trivial_decomposition_max_n}
\end{align}
Taking $j=\jmath(y)$ in \eqref{eq:trivial_decomposition_max_n} and fixing it, we conclude by induction hypothesis that $y \mapsto  \PP\left[ \bar{M}_n > y \right]$ is locally Lipschitz continuous and hence absolutely continuous and its derivative reads
\begin{align*}
\frac{\partial \PP \left[ \bar{M}_{n} \geq y \right]}{\partial y} = 
 \restr{\frac{\partial \PP \left[ \bar{M}_{j} \geq y \right]}{\partial y}}{j=\jmath_n(y)}
+  \frac{\PP \left[ \bar{M}_{\jmath_n(y)} \geq y \right] - \PP \left[ \bar{M}_n \geq y \right]}{y-\xi_n(y)}.
\end{align*}

%Similarly, 
%\begin{align*}
%& \frac{\PP\left[ \bar{M}_n > y-\delta, \bar{M}_{\jmath(y)} < y-\delta \right] - \PP\left[ %\bar{M}_n > y , \bar{M}_{\jmath(y)} < y  \right]}{-\delta} \\
%\leq & \PP\left[ \bar{M}_n > y-\delta , \bar{M}_{\jmath(y)} < y-\delta  \right] \frac{ \exp \left(- \int_{y-\delta}^{y} \frac{\dd r}{r- \underline{\xi}_{\jmath(y)}(r)} \right)-1 }{\delta} \\
%\overset{\text{as $\delta \to 0$}}{\longrightarrow} & - \frac{ \PP\left[ \bar{M}_n > y , \bar{M}_{\jmath(y)} < y  \right] }{y-\xi_n(y)} 
%\end{align*}
%and
%\begin{align*}
%& \frac{\PP\left[ \bar{M}_n > y-\delta, \bar{M}_{\jmath(y)} < y-\delta \right] - \PP\left[ \bar{M}_n > y , \bar{M}_{\jmath(y)} < y  \right]}{-\delta} \\
%\geq & \PP\left[ \bar{M}_n > y-\delta , \bar{M}_{\jmath(y)} < y-\delta  \right] \frac{ \exp \left(- \int_{y-\delta}^{y} \frac{\dd r}{r- \bar{\xi}_{\jmath(y)}(r)} \right)-1 }{\delta} \\
%\overset{\text{as $\delta \to 0$}}{\longrightarrow} &  - \frac{\PP\left[ \bar{M}_n > y , \bar{M}_{\jmath(y)} < y  \right]}{y-\xi_n(y)} .
%\end{align*}

\textit{Case $\jmath(y) = 0$.}
For $\delta>0$ we have by excursion theoretical results
\begin{alignat}{3}
  &  && \PP\left[ \bar{M}_n \geq y+\delta, \bar{M}_{1} < y+\delta \right] - \PP\left[ \bar{M}_n \geq y , \bar{M}_{1} < y  \right]  \nonumber \\
= &  && \PP\left[ \bar{M}_n \geq y+\delta, \bar{M}_{1} < y+\delta  \right] - \PP\left[ \bar{M}_n \geq y+\delta, \bar{M}_{1} < y  \right] \nonumber \\
 & + &&  \PP\left[ \bar{M}_n \geq y+\delta, \bar{M}_{1} < y   \right] - \PP\left[ \bar{M}_n \geq y, \bar{M}_{1} < y  \right] \nonumber \\
 \leq &  && \int_{y}^{y+\delta} \PP\left[ \bar{M}_1 \in \dd s \right] \frac{ \left( \xi_1(s) - \underline{\xi}_1(s) \right)^+}{s-\underline{\xi}_1(s)} \exp \left( -\int_{s}^{y+\delta} \frac{\dd r}{r-\underline{\xi}_1(r)}\right) \nonumber \\
 & +  &&   \PP\left[ \bar{M}_n \geq y, \bar{M}_{1} < y \right] \left[ \exp \left( -\int_{y}^{y+\delta} \frac{\dd r}{r - \underline{\xi}_1(r)} \right) - 1 \right].
 \label{eq:j_0_aux_derivative_right}
\end{alignat}

Similarly, we have
\begin{alignat}{3}
  &  && \PP\left[ \bar{M}_n \geq y+\delta, \bar{M}_{1} < y+\delta \right] - \PP\left[ \bar{M}_n \geq y , \bar{M}_{1} < y  \right]  \nonumber \\
\geq &  && \int_{y}^{y+\delta} \PP\left[ \bar{M}_1 \in \dd s \right] \frac{ \left( \xi_1(s) - \bar{\xi}_1(s) \right)^+}{s-\bar{\xi}_1(s)} \exp \left( -\int_{s}^{y+\delta} \frac{\dd r}{r-\bar{\xi}_1(r)}\right) \nonumber \\
 & +  &&   \PP\left[ \bar{M}_n \geq y, \bar{M}_{1} < y \right] \left[ \exp \left( -\int_{y}^{y+\delta} \frac{\dd r}{r - \bar{\xi}_1(r)} \right) - 1 \right].
\label{eq:j_0_aux_derivative_left}
\end{alignat}

From \eqref{eq:j_0_aux_derivative_right} and \eqref{eq:j_0_aux_derivative_left} it follows that the right-derivative of
\begin{align}
y \mapsto \PP \left[ \bar{M}_n \geq y, \bar{M}_{1} < y \right]
\label{eq:mixed_term_2_excursion_proof}
\end{align}
exists. 
Similar arguments for $\delta<0$ show that the left-derivative exists and is the same as the right-derivative.
Local Lipschitz continuity of \eqref{eq:mixed_term_2_excursion_proof} then follows from \eqref{eq:j_0_aux_derivative_right} and \eqref{eq:j_0_aux_derivative_left}.
Now we can conclude from \eqref{eq:trivial_decomposition_max_n}--\eqref{eq:j_0_aux_derivative_left} applied with $j=1$ that
$y \mapsto \PP \left[ \bar{M}_n \geq y \right]$ is locally Lipschitz continuous and hence absolutely continuous and its derivative reads
\begin{align*}
&\frac{\partial \PP \left[ \bar{M}_{n} \geq y \right]}{\partial y} \\
\stackrel{\eqref{eq:convergence_effective_stopping_boundaries}}{=}& \frac{\partial \PP \left[ \bar{M}_{1} \geq y \right]}{\partial y} - \frac{\partial \PP \left[ \bar{M}_{1} \geq y \right]}{\partial y} \frac{\left(  \xi_1(y) - \xi_n(y) \right)^+}{y - \xi_n(y)} - \frac{ \PP \left[ \bar{M}_{n} \geq y \right]-\PP \left[ \bar{M}_{1} \geq y \right] }{y-\xi_n(y)},
\end{align*}
which implies by induction hypothesis
\begin{align*}
  \frac{\PP \left[ \bar{M}_n \geq y \right]}{y-\xi_n(y)} + \frac{\partial \PP \left[ \bar{M}_{n} \geq y \right]}{\partial y} = 0.  
\end{align*}
This finishes the proof.
\end{proof}

Finally, we argue that $\PP \left[ \bar{M}_n \geq y \right] = K_n(y)$ holds for all $y \geq 0$.

\begin{Proposition}[Law of the Maximum]
\label{prop:Maximizing the Maximum}
Let $n\in \N$ and let Assumption \ref{ass:unicity_minimizers} hold. 
Assume that the embedding property of Theorem \ref{thm:main_result} is valid for the first $n-1$ marginals.
Then for all $y \geq 0$ we have
\begin{align}
\PP \left[ \bar{M}_n \geq y \right] = K_n(y).
%\label{eq:Maximizing the Maximum}
\end{align}
\end{Proposition}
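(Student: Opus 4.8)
The plan is to prove the identity by induction on $n$, the point being that $y\mapsto\PP[\bar M_n\ge y]$ and $y\mapsto K_n(y)$ satisfy the \emph{same} first-order ODE once the inductive hypothesis is fed in --- the former by Lemma~\ref{lem:ODE_for_the_Maximum}, the latter by Lemma~\ref{lem:ODE_K_n}. The base case $n=1$ is the classical Az\'ema--Yor computation: there $\jmath_1\equiv 0$ and $\bar M_0\equiv 0$, so the law of $\bar M_1=\bar B_{\tau_1}$ is the Hardy--Littlewood transform of $\mu_1$, whose tail is $K_1$ by \eqref{eq:def_HL_transform}. So for $n\ge 2$ I would assume, as granted by the hypothesis, that $\PP[\bar M_i\ge y]=K_i(y)$ for all $y\ge 0$ and $i=1,\dots,n-1$. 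Since $\jmath_n(y)\le n-1$ this yields $\PP[\bar M_{\jmath_n(y)}\ge y]=K_{\jmath_n(y)}(y)$ for every $y$ and, after differentiation, identifies the $y$-derivative of $\PP[\bar M_j\ge\cdot\,]$ taken at $j=\jmath_n(y)$ with $K'_{\jmath_n(y)}(y)$ for a.e.\ $y$.

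Set $g:=\PP[\bar M_n\ge\cdot\,]$ and $h:=g-K_n$. Substituting the inductive hypothesis into the right-hand side of \eqref{eq:ODE_law_maximum} turns it into the right-hand side of \eqref{eq:ODE_K_n}; hence, on the set $\{y\ge 0:\xi_n(y)<y\}$, $h$ solves the homogeneous linear equation $h'(y)+\frac{h(y)}{y-\xi_n(y)}=0$ for a.e.\ such $y$, and $h$ is (locally) absolutely continuous there by Lemmas~\ref{lem:ODE_for_the_Maximum} and \ref{lem:ODE_K_n}. Moreover $h$ is bounded by $1$ and left-continuous, and at $y=0$ one has $g(0)=1$ trivially and $K_n(0)=1$ straight from Definition~\ref{def:indices} and the standing assumptions on $\mu_1,\dots,\mu_n$, so $h(0)=0$.

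To conclude I would use that for $n\ge 2$ the set $\{\xi_n<y\}$ is open (continuity of $\xi_n$, Lemma~\ref{lem:continuity_c_power_n}) and treat each connected component $(a,b)$ separately. If $a>0$, then $a$ is a boundary point of $\{\xi_n<y\}$, so $\xi_n(a)=a$ by continuity and the bound $\xi_n\le\mathrm{id}$; since $0\le r-\xi_n(r)\le r-a$ by monotonicity of $\xi_n$, the coefficient $r\mapsto 1/(r-\xi_n(r))$ is non-integrable near $a$, so writing $h(y)=h(y_0)\exp\!\left(-\int_{y_0}^{y}\frac{\dd r}{r-\xi_n(r)}\right)$ and letting $y_0\downarrow a$ with $h(y_0)$ bounded forces $h\equiv 0$ on $(a,b)$ regardless of boundary data. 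If $a=0$, the coefficient is locally integrable, so $h$ on $(0,b)$ equals $h(0+)$ times an exponential, and $h(0+)=g(0+)-K_n(0+)=1-1=0$, again giving $h\equiv 0$ on $(0,b)$. Hence $h\equiv 0$ on $\{\xi_n<y\}$; on the remaining set $\{\xi_n=y\}$ one gets $h\equiv 0$ from left-continuity of $h$ together with the relations \eqref{eq:ODE_law_maximum_2} and \eqref{eq:ODE_K_n_2} (which, via the inductive hypothesis, match the right limits of $g$ and $K_n$). This yields $\PP[\bar M_n\ge y]=K_n(y)$ for all $y\ge 0$.

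The delicate part will be the bookkeeping along the diagonal $\{\xi_n(y)=y\}$: these $y$ are exactly where $\bar M_n$ may carry an atom, so $g$ and $K_n$ cannot be compared pointwise there but only through their right limits, using \eqref{eq:ODE_law_maximum_2} and \eqref{eq:ODE_K_n_2}; propagating those values into the region $\{\xi_n<y\}$ and reconciling the \enquote{almost everywhere} validity of the two ODEs with the absolute continuity supplied by Lemmas~\ref{lem:ODE_for_the_Maximum} and \ref{lem:ODE_K_n} is where the care lies. Monotonicity of $\xi_n$ (Lemma~\ref{lem:Monotonicity_of_Stopping_Boundaries}) is what keeps the component structure of $\{\xi_n<y\}$, and hence the step-by-step propagation of $h\equiv 0$, under control.
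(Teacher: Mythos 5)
Your proposal is correct and follows essentially the same route as the paper's proof: induct on $n$, feed the hypothesis $\PP[\bar{M}_i \geq \cdot\,] = K_i$ for $i<n$ into the ODEs of Lemma \ref{lem:ODE_K_n} and Lemma \ref{lem:ODE_for_the_Maximum} so that $h = \PP[\bar{M}_n \geq \cdot\,] - K_n$ solves $h'(y) = -h(y)/(y-\xi_n(y))$ with $h(0)=0$, and conclude $h \equiv 0$. Your component-wise analysis of $\{\xi_n<y\}$ and the treatment of the diagonal via \eqref{eq:ODE_law_maximum_2} and \eqref{eq:ODE_K_n_2} merely spell out the uniqueness step that the paper asserts in one line, so this is an elaboration rather than a different argument.
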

\begin{proof}
The case $n=1$ holds by the Az\'{e}ma-Yor embedding.
Assume by induction hypothesis that 
\begin{align*}
K_i = \PP\left[ \bar{M}_i \geq \cdot \right], \qquad i=1,\dots, n-1.
\end{align*}

In Lemma \ref{lem:ODE_K_n} and \ref{lem:ODE_for_the_Maximum} we derived an ODE for $K_n$ and $\PP\left[ \bar{M}_n \geq \cdot \right]$, respectively, in terms of $K_1,\dots,K_{n-1}$ and $\PP\left[ \bar{M}_1 \geq \cdot \right],\dots,\PP\left[ \bar{M}_{n-1} \geq \cdot \right]$, respectively. These ODEs are valid for a.e. $y \geq 0$.
%Both, \eqref{eq:ODE_K_n} and \eqref{eq:ODE_law_maximum}, constitute an ODE with continuous driver on the interval where $\jmath$ is constant. 
By induction hypothesis both drivers of these ODEs coincide everywhere and hence the claim follows from the boundary conditions
\begin{alignat*}{7}
&K_n(y) && \to 0 \quad &&\text{as $y \to \infty$}, \qquad \qquad && K_n(y) && \to 1 \quad &&\text{as $y \to 0$}, \\
&\PP\left[ \bar{M}_n \geq y \right] && \to 0 \quad &&\text{as $y \to \infty$}, \qquad \qquad && \PP\left[ \bar{M}_n \geq y \right] && \to 1 \quad &&\text{as $y \to 0$},
\end{alignat*}
absolute continuity of $K_n$ and $\PP\left[ \bar{M}_n \geq \cdot \right]$ and the fact that the ODE
\begin{align*}
\Big( \PP\left[ \bar{M}_n \geq y \right] - K_n(y) \Big)' = - \frac{\PP\left[ \bar{M}_n \geq y \right] - K_n(y)}{y-\xi_n(y)}, \qquad \PP\left[ \bar{M}_n \geq 0 \right] - K_n(0) = 0,
\end{align*}
has unique solution given by $0$.
\end{proof}

\subsection{Embedding Property}

In this subsection we prove that the stopping times $\tau_1, \dots, \tau_n$ from Definition \ref{def:iterated_AY} embed the laws $\mu_1,\dots,\mu_n$ if Assumption \ref{ass:unicity_minimizers} is in place. More precisely, given Proposition \ref{prop:Maximizing the Maximum} above and by inductive reasoning, to complete the proof of Theorem \ref{thm:main_result} we only need to show the following:
\begin{Proposition}[Embedding]
\label{prop:embedding}
In the setup of Theorem \ref{thm:main_result} we have
\begin{align}
B_{\tau_n} \sim \mu_n   
\end{align}
and $\left( B_{\tau_n \wedge t} \right)_{t \geq 0}$ is a uniformly integrable martingale.
\end{Proposition}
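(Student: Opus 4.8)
The plan is to reduce Proposition~\ref{prop:embedding} to an identification of one-dimensional laws via the known distribution of the maximum. Since Proposition~\ref{prop:Maximizing the Maximum} (together with the inductive hypothesis that Theorem~\ref{thm:main_result} holds for the first $n-1$ marginals) gives $\PP[\bar M_n\ge y]=K_n(y)$ for all $y\ge 0$, and since $\tau_n$ is by construction a first exit time of $(B_t,\bar B_t)_{t\ge\tau_{n-1}}$ from a region, the key is to exploit the "strong relation" between $M_n$ and $\bar M_n$ from Lemma~\ref{lem:Key_Properties_of_the_Stopping_Rule}. First I would use \eqref{eq:impl5} and \eqref{eq:impl3} to show that, on the event where $\xi_n$ is strictly increasing at the relevant level, the events $\{M_n\ge\xi_n(y)\}$ and $\{\bar M_n\ge y\}$ differ only through the contribution of the previous marginal, precisely as quantified in Lemma~\ref{lem:Contributions to the Maximum}. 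Combining that lemma with $\PP[\bar M_n\ge y]=K_n(y)$ and $\PP[\bar M_{\jmath_n(y)}\ge y]=K_{\jmath_n(y)}(y)$ (induction hypothesis), together with the definition \eqref{eq:definition_Kn} of $K_n$, should yield a closed-form expression for $\PP[M_n\ge\xi_n(y)]$, hence — after inverting the (monotone, continuous for $n\ge2$) boundary $\xi_n$ — for $\PP[M_n\ge x]$ at every point $x$ in the range of $\xi_n$.

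Next I would deal with the points $x$ \emph{not} in the range of $\xi_n$, i.e. the flat pieces of $\xi_n$ and, for $n=1$, possible jumps: on an excursion of $(B,\bar B)$ below a flat stretch of $\xi_n$ the process exits at the left endpoint of that stretch, so no mass of $\mu_n$ is placed strictly inside such an interval, and I would check this is consistent with the fact that $x\mapsto c_n(x)$ is affine there (which is exactly what the minimisation \eqref{eq:definition_xi_n_tangent_interpretation} / Assumption~\ref{ass:unicity_minimizers}(ii) enforces). Here the Az\'ema--Yor philosophy is that the functional relation $B_{\tau_n}=\xi_n(\bar B_{\tau_n})$ holds up to the flat parts, and the barycentre identity $b_n(\xi_n(y))$-type relation recovers the conditional mean. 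Assembling the pieces, $\PP[B_{\tau_n}\le x]$ matches $\mu_n((-\infty,x])$ for all $x$: one reads off the atoms and absolutely continuous part from $K_n$ via \eqref{eq:definition_Kn} and the definition of $c_n$ as the call-price function of $\mu_n$, using that a measure on $\R$ is determined by its call prices $c_n(\cdot)$.

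For uniform integrability of $(B_{\tau_n\wedge t})_{t\ge0}$ I would argue as follows. By the inductive hypothesis $(B_{\tau_{n-1}\wedge t})$ is a UI martingale, so $B_{\tau_{n-1}}$ is integrable with $\E{B_{\tau_{n-1}}}=0$ and $B_{\tau_{n-1}}\sim\mu_{n-1}$. On $\{\tau_n>\tau_{n-1}\}$ the process is stopped at the first time $(B,\bar B)$ hits the graph of $\xi_n$, which lies strictly below the running maximum; the standard Az\'ema--Yor estimate — bounding $|B_{\tau_n}|$ in terms of $\bar B_{\tau_n}$ and using $\E{\bar B_{\tau_n}}<\infty$, which follows from $\PP[\bar B_{\tau_n}\ge y]=K_n(y)$ having finite first moment because $\mu_n$ has finite first moment and $K_n\le\mu_n^{\mathrm{HL}}([y,\infty))$ — shows $(B_{\tau_{n-1}\vee t\wedge\tau_n})$ is UI, and concatenating with the UI martingale up to $\tau_{n-1}$ gives the claim. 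In particular $\tau_n<\infty$ a.s.\ follows from $\langle B\rangle_\infty=\infty$ and the fact that $B$ cannot stay in the embedding region forever once $\bar B$ exceeds $r_{\mu_n}$.

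The main obstacle I expect is the bookkeeping at points where $\xi_n$ is \emph{not} strictly increasing (constant stretches) and where several boundaries $\xi_k$, $k<n$, coincide with $\xi_n$ at the relevant level: there Lemma~\ref{lem:Key_Properties_of_the_Stopping_Rule} only gives the weaker implications \eqref{eq:impl5}, \eqref{eq:impl1}, \eqref{eq:impl2}, \eqref{eq:impl4}, and one must be careful that the mass $\mu_n$ assigns to the "jump locus" of $\xi_n^{-1}$ is exactly accounted for — equivalently, that the candidate law reconstructed from $K_n$ has no spurious atoms or gaps. I would handle this by passing to the generalised (right-continuous) inverse of $\xi_n$ throughout and checking the two expressions for $\PP[B_{\tau_n}\in(a,b]]$ — one from the excursion/first-exit description of $\tau_n$, one from $K_n$ and the definition of $c_n$ — agree on a generating $\pi$-system of intervals, with the continuity of $c^n$ and $\xi_n$ from Lemma~\ref{lem:continuity_c_power_n} ensuring the limits behave.
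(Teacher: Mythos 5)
Your overall architecture for the law identification is the paper's: combine Lemma~\ref{lem:Contributions to the Maximum} with Proposition~\ref{prop:Maximizing the Maximum} and the induction hypothesis, then invert the monotone continuous boundary. But there is a genuine gap at the crucial step: you say the definition \eqref{eq:definition_Kn} of $K_n$ closes the computation, and it does not. What the combination gives is
\begin{align*}
\PP\left[ M_n \geq \xi_n(y) \right] \;=\; K_n(y) - K_{\jmath_n(y)}(y) + \mu_{\jmath_n(y)}\big([\xi_n(y),\infty)\big)
\;=\; \frac{c_n(\xi_n(y)) - c_{\jmath_n(y)}(\xi_n(y))}{y-\xi_n(y)} - c'_{\jmath_n(y)}(\xi_n(y)),
\end{align*}
a difference quotient, and to identify this with $\mu_n([\xi_n(y),\infty)) = -c'_n(\xi_n(y))$ you need the first-order (tangency) condition \eqref{eq:optimality_xi}, valid for a.e.\ $y$ at which $\xi_n$ is strictly increasing, together with \eqref{eq:constant_xi_n} to dispose of kinks of $c^n(\cdot,y)$; this is exactly the content of the technical Appendix lemma and is where the minimisation \eqref{eq:definition_xi_n} actually enters. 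Your appeal to ``the barycentre identity'' and ``a measure is determined by its call prices'' does not substitute for it. You also never check that the range of $\xi_n$ covers the whole support of $\mu_n$ (the paper verifies $\xi_n(0)=l_{\mu_n}$ and $\xi_n(r_{\mu_n})=r_{\mu_n}$), which is needed before matching tails on a dense set of boundary values suffices.

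The uniform integrability argument contains a step that is simply false: $K_n(y)\leq \mu_n^{\mathrm{HL}}([y,\infty))$ together with $\int |x|\,\mu_n(\dd x)<\infty$ does \emph{not} give $\E{\bar{B}_{\tau_n}}<\infty$ --- the Hardy--Littlewood transform of an integrable measure is integrable only under an $L\log L$-type condition, so already for $n=1$ the one-sided maximum of the Az\'ema--Yor embedding may fail to have a finite mean. The paper avoids this by using the criterion \eqref{eq:ui_condition_AGY} of \citet{Azema1980}, which only requires $xK_n(x)\to 0$ (true for integrable $\mu_n$), and, crucially, it controls the \emph{negative} side separately: the probability of reaching $-x$ before $\tau_n$ is bounded by the gambler's-ruin quantity $\max_{i\leq n}\xi_i^{-1}(-x)\big/\big(x+\max_{i\leq n}\xi_i^{-1}(-x)\big)$ with $\xi_i^{-1}(-x)\leq b_i(-x)\to 0$. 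Your sketch bounds only $|B_{\tau_n}|$ through $\bar{B}_{\tau_n}$ and never addresses the running minimum before $\tau_n$, so even apart from the false finite-mean claim it does not verify the two-sided condition needed for uniform integrability; the ``concatenation'' with the UI martingale up to $\tau_{n-1}$ is also not a routine step and would need an argument.
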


\begin{proof} 
The case $n=1$ is just the Az\'{e}ma-Yor embedding. By induction hypothesis, assume that the claim holds for all $i \leq n-1$.

We claim that $\xi_n$ ranges continuously over the full support of $\mu_n$. This is because, firstly, we know from Lemma \ref{lem:Monotonicity_of_Stopping_Boundaries} that $\xi_2, \dots, \xi_n$ are continuous. 
Secondly, we have by using $l_{\mu_n} \leq l_{\mu_i}$ that
\begin{align*}
\inf_{\zeta \leq 0} \frac{c^n(\zeta,0)}{0-\zeta} 
\geq \inf_{\zeta \leq 0} \min_{1 \leq i<n} \Bigg\{ \underbrace{\frac{c_n(\zeta)-c_i(\zeta)}{0-\zeta}}_{\geq 0}  + \underbrace{K_i(0)}_{=1} \Bigg\} \wedge \underbrace{\frac{c_n(l_{\mu_n})}{0-l_{\mu_n}}}_{=1} = 1
\end{align*}
which shows that $\xi_n(0)=l_{\mu_n}$. 
Furthermore, by using $r_{\mu_n} \geq r_{\mu_i}$ we have from \eqref{eq:def_barycenter} and \eqref{eq:lower_bound_for_xi_n}
that
\begin{align*}
\xi_n(r_{\mu_n}) = r_{\mu_n}.
\end{align*}

Let $y>0$ be such that $\xi_n$ is differentiable and strictly increasing at $y$, $\xi_n(y)$ is not an atom of neither $\mu_n$ nor $\mu_{\jmath_n(y)}$ and $y$ is not a discontinuity of $\xi_1$. 
Note that for such a $y$ equation \eqref{eq:optimality_xi} holds because of \eqref{eq:constant_xi_n}.
Applying previous results we obtain 
\begin{alignat}{7}
& &&  \quad \quad &&  \PP \left[ M_n \geq \xi_n(y) \right] - \PP\left[ M_{\jmath_n(y)} \geq \xi_n(y) \right] + \PP\left[ \bar{M}_{\jmath_n(y)} \geq y \right] \nonumber \\
& \stackrel{\mathclap{\text{Lemma \ref{lem:Contributions to the Maximum}}}}{=} && \quad && \PP \left[ \bar{M}_n \geq y \right] \nonumber \\
& \stackrel{\mathclap{\text{Prop. \ref{prop:Maximizing the Maximum}}}}{=} && \quad && K_n(y)  \nonumber \\
& \stackrel{\mathclap{\eqref{eq:optimality_xi}}}{=} && \quad && - c'_n(\xi_n(y)) +   c'_{\jmath_n(y)}(\xi_n(y)) + K_{\jmath_n(y)}(y),  \nonumber
\end{alignat}
which implies by induction hypothesis that
\begin{align*}
\PP \left[ M_n \geq \xi_n(y) \right] = - c'_n(\xi_n(y)) = \mu_n([\xi_n(y),\infty)).
%\label{eq:embedding_property_proof}
\end{align*}
We have matched the distribution of $M_n$ to $\mu_n$ at almost all points inside the support.
The embedding property follows.

Now we prove uniform integrability by applying a result from \citet{Azema1980} which states that if
\begin{align}
\lim_{x \to \infty} x\PP\left[ \bar{|B|}_{\tau_n} \geq x \right] = 0
\label{eq:ui_condition_AGY}
\end{align}
then $\left( B_{\tau_n \wedge t} \right)_{t \geq 0}$ is uniformly integrable.

Let us verify \eqref{eq:ui_condition_AGY}. Set $H_x = \inf \left\{ t>0: B_t = x \right\}$. 
We have (here $\xi_i^{-1}$ denotes the left-continuous inverse of $\xi_i$)
\begin{align*} 
\PP\left[ \bar{|B|}_{\tau_n} \geq x \right] &\leq \PP\left[ H_{-x}<H_{\max_{i \leq n}\xi_i^{-1}(-x)} \right] + \PP \left[ \bar{B}_{\tau_n} \geq x  \right] \\
													 &=\frac{\max_{i \leq n}\xi_i^{-1}(-x)}{x + \max_{i \leq n}\xi_i^{-1}(-x)} + K_n(x).
\end{align*}

From the definition of $\xi_n$, cf. \eqref{eq:definition_xi_n}, and the properties of $b_i$, cf. \eqref{eq:def_barycenter} we have
\begin{align*}
0 \leq \max_{i \leq n}\xi_i^{-1}(-x) \leq \max_{i \leq n}b_i(-x) \underset{x \to \infty}{ \longrightarrow } 0
\end{align*}
and hence, recalling the definition of $\mu_n^{\mathrm{HL}}$ in \eqref{eq:def_HL_transform},
\begin{align*} 
\lim_{x \to \infty}x\PP\left[ \bar{|B|}_{\tau_n} \geq x \right] \leq \lim_{x \to \infty} x K_n(x) \leq \lim_{x \to \infty} x \frac{c_n(b_n^{-1}(x))}{x - b^{-1}_n(x)} = \lim_{x \to \infty} x \mu_n^{\mathrm{HL}}([x,\infty)) = 0. 
\end{align*}
This finishes the proof.
\end{proof}

\section{Discussion of Assumption \ref{ass:unicity_minimizers} and Extensions}
\label{sec:Extensions}

In this section we focus on our main technical assumption so far: the condition (ii) in Assumption \ref{ass:unicity_minimizers}. We construct a simple example of probability measures $\mu_1,\mu_2,\mu_3$ which violate the condition and where the stopping boundaries $\xi_1,\xi_2,\xi_3$, obtained via \eqref{eq:definition_xi_n}, fail to embed $(\mu_1,\mu_2,\mu_3)$. It follows that the assumption is not merely technical but does rule out certain type of interdependence between the marginals. If it is not satisfied then it may not be enough to perturb the measures slightly to satisfy it.

We then present an extension of our embedding, in the case $n=3$, which works in all generality. More precisely, we show how to modify the optimisation problem from which $\xi_3$ is determined in order to obtain the embedding property.
The general embedding, as compared to the embedding in the presence of Assumption \ref{ass:unicity_minimizers}(ii), gains an important degree of freedom and becomes less explicit. In consequence it is also much harder to implement in practice, to the point that we do not believe this is worth pursuing for $n>3$. This is also why, as well as for the sake of brevity, we keep the discussion in the section rather formal.

%The example and the reasoning for the general embedding in the case of $n=3$ in this section suggest that Assumption \ref{ass:unicity_minimizers} is central to our previous arguments.
%In addition, a \enquote{practical} sufficient condition seems hard to obtain because of the subtle, multi-dimensional nature of Assumption \ref{ass:unicity_minimizers}.

\subsection{Counterexample for Assumption \ref{ass:unicity_minimizers}(ii)}

In Figure \ref{fig:potentials_couterexample} we define measures via their potentials
\begin{align}
U\mu:\R \to \R, \quad x \mapsto \quad U\mu(x):= -\int_{\R}{ \left| u-x \right| \mu(\dd u) }.
\end{align}
We refer to \citet[Proposition 2.3]{Obloj:04b} for useful properties of $U\mu$.

\begin{figure}[htb]
\begin{center}
\includegraphics[scale=0.85]{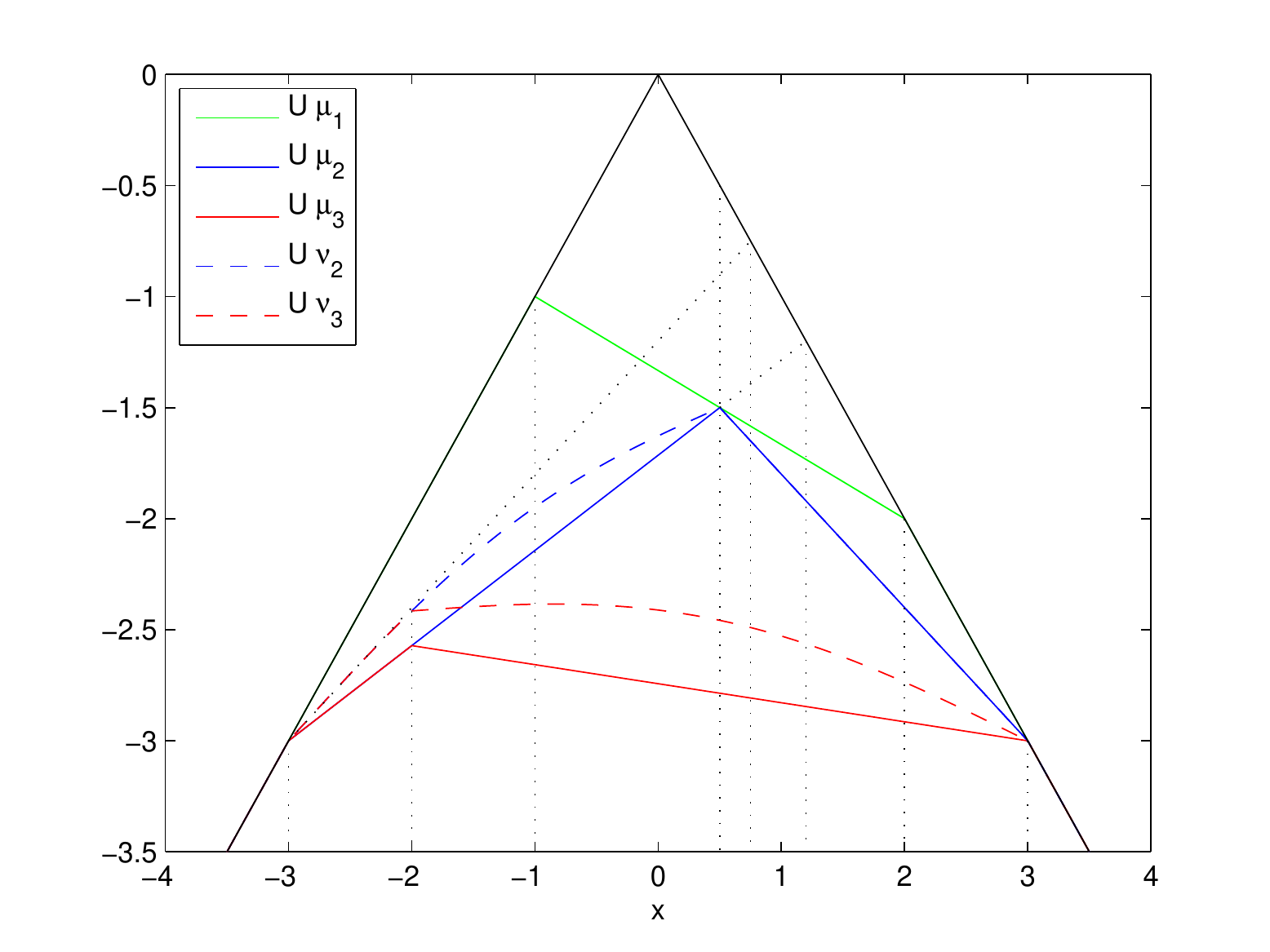}  
\caption{Potentials of $\mu_1,\mu_2$, $\mu_3, \nu_2$ and $\nu_3$.}
\label{fig:potentials_couterexample}
\end{center} 
\end{figure}

The measures with potentials illustrated in Figure \ref{fig:potentials_couterexample} are given as
\begin{alignat}{5}
&\mu_1(\{-1\}) = \frac{2}{3}, \quad &&\mu_1(\{2\}) = \frac{1}{3}, \quad && \label{eq:counterexample_mu_1} \\
&\mu_2(\{-3\}) = \frac{2}{7}, \quad &&\mu_2\left(\left\{\frac{1}{2}\right\}\right) = \frac{18}{35}, \quad &&\mu_2(\{3\}) = \frac{1}{5},  \label{eq:counterexample_mu_2} \\
&\mu_3(\{-3\}) = \frac{2}{7}, \quad &&\mu_3(\{-2\}) = \frac{9}{35}, \quad &&\mu_3(\{3\}) = \frac{16}{35}. \label{eq:counterexample_mu_3}
\end{alignat} 

Observe that the embedding for $\left(\mu_1, \mu_2, \mu_3\right)$ is unique: We write $H_{a,b}$ for the exit time of $[a,b]$ and denote $H_{a,b} \circ \theta_\tau := \inf\left\{ t>\tau:B_t \not \in (a,b) \right\}$. Then the embedding $\left(\tau_1,\tau_2',\tau_3\right)$ can be written as
\begin{align}
\tau_1 = H_{-1,2}, \quad \tau_2' =  H_{-3,\frac{1}{2}} \circ \theta_{\tau_1}\indicator{B_{\tau_1}=-1} + H_{\frac{1}{2},3} \circ \theta_{\tau_1}\indicator{B_{\tau_1}=2}, \quad \tau_3 = H_{-2,3} \circ \theta_{\tau_2}.
\label{eq:unique_stopping_time_counterexample}
\end{align}

As mentioned earlier, our construction yields the same first two stopping boundaries as the method of \citet{Brown98themaximum}. In this case, cf. Figure \ref{fig:stopping_boundaries_couterexample},
\begin{align*}
\xi_1(y):= \begin{cases} 
-1 & \text{if $y \in [0,2)$,}   \\
y & \text{else,}
\end{cases}
\qquad \qquad  
\xi_2(y):= \begin{cases} 
-3 & \text{if $y \in [0,\frac{1}{2})$,}   \\
\frac{1}{2}& \text{if $y \in [\frac{1}{2},3)$,}   \\
y  & \text{else.}
\end{cases}
\end{align*}
This already shows that our embedding fails to embed $\mu_2$. To see this one just has to compare the stopping boundary $\xi_2$ in the Definition of $\tau_2$ with \eqref{eq:unique_stopping_time_counterexample}.
In Section \ref{subsec:general_embedding_n_3} we will recall from \citet{Brown98themaximum} how the stopping time $\tau_2$ has to be modified into $\tau_2'$, giving the stopping time above.

More importantly, the embedding for $\mu_3$ fails because the optimization problem \eqref{eq:definition_xi_n} does not return the third (unique) stopping boundary which is required for the embedding of $(\mu_1,\mu_2,\mu_3)$. Indeed, for sufficiently small $y>\frac{1}{2}$, in the region $\zeta<\min(\xi_1(y),\xi_2(y)) = -1$ we are looking at the minimization of $\zeta \mapsto \frac{c_3(\zeta)}{y-\zeta}$ which is attained by $\xi_3(y)=-3<-2$ since $\mu_3$ has an atom at $-3$. Consequently, there will be a positive probability to hit $-3$ after $\tau_2$. This contradicts \eqref{eq:counterexample_mu_3}. This, together with the correct third boundary $\tilde \xi_3$, is illustrated in Figure \ref{fig:stopping_boundaries_couterexample}.

\begin{figure}[htb]
\begin{center}
\includegraphics[scale=0.70]{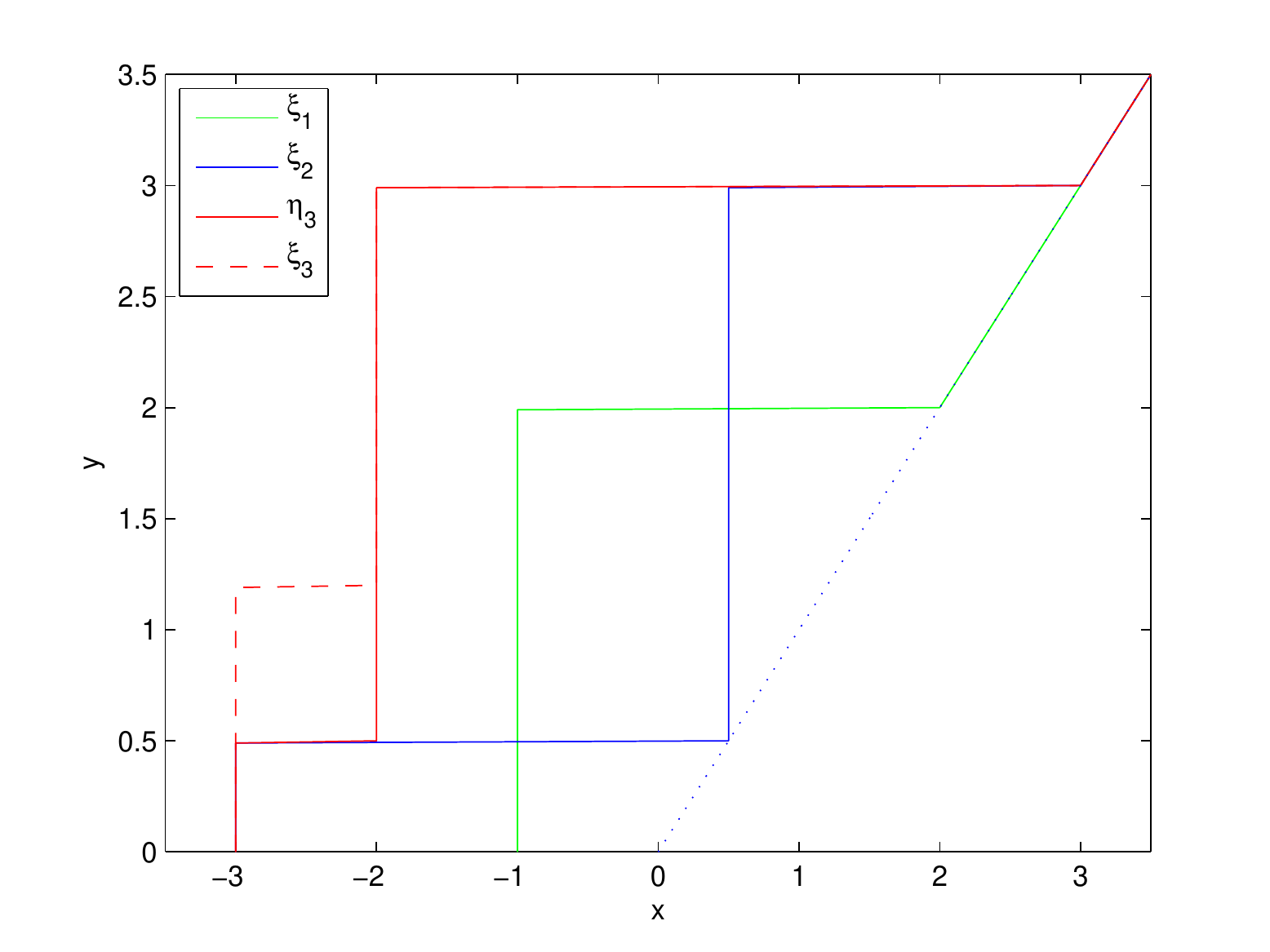}  
\caption{We illustrate the (unique) boundaries $\xi_1,\xi_2,\eta_3$ required for the embedding of $(\mu_1,\mu_2,\mu_3)$ from \eqref{eq:counterexample_mu_1}--\eqref{eq:counterexample_mu_3} and the stopping boundary $\xi_3$ obtained from \eqref{eq:definition_xi_n}. 
In order to ensure the embedding for $\mu_2$, the mass stopped at $\tau_2$ in $-1$ on the event $\{\bar{B}_{\tau_2}\in (1/2,2)\}$ is diffused to $-3$ or to $1/2$ at $\tau_2'$, without affecting the maximum: $\bar{B}_{\tau_2}=\bar{B}_{\tau_2'}$. 
Note that the case $\xi_2(y)=y$, here for $y=1/2$, is possible and required to define the embedding. After $\tau_2'$ we need to define $\tau_3$ which embeds $\mu_3$ which here is implied directly by \eqref{eq:unique_stopping_time_counterexample}. 
In Section \ref{subsec:general_embedding_n_3} we develop arguments which generalise this.}
\label{fig:stopping_boundaries_couterexample}
\end{center} 
\end{figure}

This example does not contradict our main result because Assumption $\ref{ass:unicity_minimizers}$(ii)(a) is not satisfied for $i=2$ and $y=\frac{1}{2}$, where $\zeta=-3$ minimizes the objective function but $c_2\left( \frac{1}{2} \right) = c_1\left( \frac{1}{2} \right)$ holds.
%
%Let $(X_1,X_2)$ be a martingale with marginals $\nu_1$ and $\nu_2$.
%\citet{RePEc:arx:papers:1304.2141} consider optimal lower bounds on $\E{\left| X_1 - X_2 \right|}$ of the form $\int g \dd \nu_1 + \int h \dd \nu_2$. 
%For their construction they require some condition on the difference of the distribution functions of $\nu_1$ and $\nu_2$.
%In our construction we have the additional state variable of the continuously sampled maximum and our construction depends on this variable. 
%Hence, the fact that Assumption \ref{ass:unicity_minimizers} features quantities which relate to this variable should not surprise. 
%
Our counterexample also shows that a \enquote{small perturbation} to $(\mu_1,\mu_2,\mu_3)$ does not remove the problem. Indeed, similar reasoning to the one above holds for measures $(\mu_1,\nu_2,\nu_3)$ defined by their potentials in Figure \ref{fig:potentials_couterexample}.
Assumption \ref{ass:unicity_minimizers} rules out certain type of subtle structure between the marginals and not only some ``isolated" or ''singluar" configurations of measures. 
%In the case of $n=2$ it one can see that Assumption \ref{ass:unicity_minimizers} holds if there is no subinterval on which measures $\mu_1, \mu_2$ have the same mass and mean. This is true in particular if $U\mu_1<U\mu_2$. However for $n\geq 3$ an analogue condition is no longer sufficient as the interaction between all the marginals is important. 

%\input{general_embedding_n_3}

\subsection{Sketch for General Embedding in the Case $n=3$ }
\label{subsec:general_embedding_n_3}

In the example of the measures $(\mu_1,\mu_2,\mu_3)$ from \eqref{eq:counterexample_mu_1}--\eqref{eq:counterexample_mu_3} the (unique) embedding could still be seen as a type of \enquote{iterated Az\'{e}ma-Yor type embedding} although it does not satisfy the relations from Lemma \ref{lem:Key_Properties_of_the_Stopping_Rule}.
Consequently, one might conjecture that a modification of the optimization problem \eqref{eq:definition_xi_n} and a relaxation of Lemma \ref{lem:Key_Properties_of_the_Stopping_Rule} might lead to a generally applicable embedding. We now explain in which sense this is true. Our aim is to outline new ideas and arguments which are needed. The technical details quickly become very involved and lengthy. In the sake of brevity, but also to better illustrate the main points, we restrict ourselves to a formal discussion and the case $n=3$.

In order to understand the problem in more detail, we need to recall from \citet{Brown98themaximum} how the embedding for $\mu_2$ looks like in general. 
It reads
\begin{align}
\tau^{\mathrm{BHR}}_2:= \begin{cases} 
\tau'_2 & \text{if $\xi_2^{-1} \neq \xi_1(\bar{B}_{\tau_1})$ and $\xi_1(\bar{B}_{\tau_1}) < \xi_2(\bar{B}_{\tau_1})$,}   \\
\tau_2 & \text{else,} 
\end{cases}
\label{eq:extensions_BHR_embedding}
\end{align}
where $\tau'_2$ is some stopping time with $\bar{B}_{\tau_2}=\bar{B}_{\tau_2'}$. Its  existence is established by \citet{Brown98themaximum} by showing that the relative parts of the mass which are further diffused have the same mass, mean and are in convex order. 
In general there will be infinitely many such stopping times $\tau'_2$. Although this is not true for $(\mu_1,\mu_2,\mu_3)$ in \eqref{eq:counterexample_mu_1}--\eqref{eq:counterexample_mu_3} because their embedding was unique, it is true for  measures $(\mu_1,\nu_2,\nu_3)$ which are defined via their potentials in Figure \ref{fig:potentials_couterexample}. 

Let $\xi_1$ and $\xi_2$ be defined as in \eqref{eq:definition_xi_n} and let $M_2 = B_{\tau_2}$.
Now our goal is to define an embedding $\tilde{\tau}_3$ for the third marginal on top of the embedding of \citet{Brown98themaximum} in a situation as in $\left(\mu_1, \nu_2, \nu_3 \right)$. We still want to define our iterated Az\'{e}ma-Yor type embedding through a stopping rule based on some stopping boundary $\tilde{\xi}_3$ as a first exit time,
\begin{align}
\tilde{\tau}_3:= \begin{cases} 
\inf\left\{ t \geq \tau^{\mathrm{BHR}}_2 : B_t \leq \tilde{\xi	}_3(\bar{B}_t) \right\} & \text{if $B_{\tau^{\mathrm{BHR}}_2} > \tilde{\xi}_3(\bar{B}_{\tau^{\mathrm{BHR}}_2})$,}   \\
\tau^{\mathrm{BHR}}_2 & \text{else,} 
\end{cases}
\label{eq:extension_tau_3}
\end{align}
and prove that this is a valid embedding of $\mu_3$.
We observe that now the choice of $\tau'_2$ in the definition of $\tau^{\mathrm{BHR}}_2$ may matter  for the subsequent embedding. Similarly as in the embedding of \citet{Brown98themaximum} we expect that this will be only possible if the procedure which produces $\tilde{\xi}_3$ yields a continuous $\tilde{\xi}_3$. Otherwise an additional step, producing a stopping time $\tau_3'\geq \tilde{\tau}_3$ would be required and further complicate the presentation.

With this, a more canonical approach in the context of Lemma \ref{lem:Key_Properties_of_the_Stopping_Rule} is to write
\begin{align}
\label{eq:book_keeping}
\PP \left[ \bar{M}_3 \geq y \right] &= \PP \left[ M_3 \geq \tilde{\xi}_3(y) \right] + \text{\enquote{error-term}},
\end{align}
which we formalize in \eqref{eq:extension_decomposition_of_maximum}.
As it will turn out, this \enquote{error-term} provides a suitable \enquote{book-keeping procedure} to keep track of the masses in the embedding. 
We proceed along the lines of the proof of our main result. For simplicity, we further assume that $\xi_2$ has only one discontinuity, i.e. $\underline{z}:=\xi_2(\underline{y}-)<\xi_2(\underline{y}+):=\overline{z}$ for some $\underline{y} \geq 0$ and we let $\bar{y}:=\xi_1^{-1}(\overline{z})$. 
As explained below, this is not restrictive since our procedure is localised.
If $\bar{y} \leq \underline{y}$ then $\mu_1$ can be \enquote{ignored} and the results of \citet{Brown98themaximum} apply. Hence we assume $\bar{y} > \underline{y}$.

\subsubsection{Redefining $\xi_3$ and $K_3$}

Define the following auxiliary terms,
\begin{alignat}{3}
&F(\zeta,y;\tau'_2) &&:= \indicator{\bar{M}_{1} \geq y}  \big( \zeta - M_2 \big)^+,\label{eq:aux_term} \\
&f^{\mathrm{iAY}}(\zeta,y;\tau'_2) &&:= \EE\left[ F(\zeta,y;\tau'_2) \right].
\end{alignat}
As the notation underlines, these quantities may depend on the additional choice of stopping time $\tau_2'$ between $\tau_2$ and $\tau_3$. Note that for $\zeta \in [\underline{z}, \overline{z}]$ and $y \in [\underline{y}, \bar{y}]$,
\begin{align}
\frac{\partial f^{\mathrm{iAY}}}{\partial \zeta}(\zeta,y;\tau'_2) &= \PP\left[ \bar{M}_1 \geq y, M_2 < \zeta \right], \label{eq:partial_derivative_f_x} %\\
%\frac{\partial^2 f^{\mathrm{iAY}}}{\partial \zeta \partial y}(\zeta,y;\tau'_2) &= -\frac{\PP\left[ \bar{M}_1 \in \dd y, M_2 < \zeta \right]}{\dd y}, \label{eq:partial_derivative_f_x_y}
\end{align}
and 
\begin{align}
\frac{\partial f^{\mathrm{iAY}}}{\partial y}(\zeta,y;\tau'_2) 
&= -\EE\left[ \frac{\indicator{\bar{M}_1 \in \dd y, M_2<\zeta}}{\dd y} \right] \zeta +  \EE\left[ \frac{\indicator{\bar{M}_1 \in \dd y, M_2<\zeta}}{\dd y} M_2 \right] \nonumber \\
&=-\Big(\zeta - \alpha(\zeta,y;\tau'_2)\Big)\frac{\PP\left[ \bar{M}_1 \in \dd y, M_2<\zeta \right]}{\dd y}
\label{eq:partial_derivative_f_y}
\end{align}
where
\begin{align}
\alpha(\zeta,y;\tau'_2) &:= \EE\left[ M_2 \big \vert \bar{M}_1 = y, M_2<\zeta \right], \\
\beta(\zeta,y;\tau'_2)  &:= \EE\left[ M_2 \big \vert \bar{M}_1 = y, M_2 \geq \zeta \right].
\end{align}
With these definitions we have by the properties of $\tau'_2$,
\begin{equation}
\begin{split} 
  \alpha(\zeta,y;\tau'_2)&\frac{\PP\left[ \bar{M}_1 \in \dd y, M_2<\zeta \right]}{\dd y} + \beta(\zeta,y;\tau'_2)\frac{\PP\left[ \bar{M}_1 \in \dd y, M_2 \geq \zeta \right]}{\dd y} \\
= &\ \xi_1(y) \frac{\PP\left[ \bar{M}_1 \in \dd y  \right]}{\dd y}.
\end{split}
\label{eq:relation_alpha_beta}
\end{equation}

We now redefine $\xi_3$ and $K_3$ from \eqref{eq:definition_xi_n} and \eqref{eq:definition_Kn}, respectively, and denote the new definition by $\tilde{\xi}_3$ and $\tilde{K}_3$. 
To this end, introduce the function
\begin{numcases}{\tilde{c}^3(\zeta,y):=}
c_3(\zeta)-f^{\mathrm{iAY}}(\zeta,y;\tau'_2)& \text{if $\underline{z} \leq \zeta \leq \overline{z}, \underline{y} \leq y \leq \bar{y}$, \qquad}
\label{eq:redefintion_tilde_c_3_a} \\
c^3(\zeta,y)    & \text{else.}
\label{eq:redefintion_tilde_c_3_b}
\end{numcases} 

We have that $\tilde{c}^3$ is continuous and $\tilde{c}^3 \leq c^3$.
Using the properties of $\tau'_2$ this can be seen from the following:
%\begin{align}
%\E{ \pp{ M_2 - \overline{z} } \indicator{ \bar{M}_1 \geq y } } = \E{ \pp{ M_1 - \overline{z} } \indicator{ \bar{M}_1 \geq y } }.
%\end{align}
\begin{align}
f^{\mathrm{iAY}}(\zeta,y;\tau'_2) &= \E{ \pp{ \zeta - M_2 } \indicator{ \bar{M}_1 \geq y } } = \E{ \left\{ \pp{ M_2 - \zeta } - (y - \zeta) \right\} \indicator{ \bar{M}_1 \geq y } }   \nonumber \\
&= \E{ \pp{ M_2 - \zeta } \indicator{ \bar{M}_1 \geq y } } - (y - \zeta) K_1(y)   \\
&\geq \begin{cases} c_1(\zeta) - (y-\zeta)K_1(y) &\text{if $\zeta > \xi_1(y)$,}  \\
0 & \text{else. } \end{cases}  \nonumber
\end{align}
for $\zeta \in [\underline{z}, \overline{z}]$ and $y \in [\underline{y}, \bar{y}]$, with equality for $\zeta = \overline{z}$. Continuity at $\zeta = \underline{z}$ holds by the properties of $\tau'_2$. For $y=\bar{y}$ we have $f^{\mathrm{iAY}}(\zeta,\bar{y};\tau'_2) = 0$.
As for continuity at $y=\underline{y}$ it is enough to observe
\begin{align*}
\E{ \pp{ M_2 - \zeta } \indicator{ \bar{M}_1 \geq \underline{y} } } = c_2(\zeta) - (\underline{y}-\zeta)(K_2(\underline{y})-K_1(\underline{y})).
\end{align*}

As before, let 
\begin{align}
\tilde{\xi}_3(y):=\argmin_{\zeta < y}\frac{\tilde{c}^3(\zeta,y)}{y-\zeta}
\end{align}
and 
\begin{align}\label{eq:def_tildeK3}
\tilde{K}_3(y):=\frac{\tilde{c}^3(\tilde{\xi}_3(y),y)}{y-\tilde{\xi}_3(y)}.
\end{align}

It is clear that a discontinuity of $\xi_2$ results in a local perturbation of $c^3$ into $\tilde c^3$ and in consequence of $\xi_3$ into $\tilde \xi_3$. If $\xi_2$ has multiple discontinuities the construction above applies to each of them giving a global definition of $\tilde c^3$. Then $\tilde K_3$ and $\tilde \xi_3$ are defined as above.

%We stress that now the quantities $\tilde{\xi}_3$ and $\tilde{K}_3$ may depend on $\tau'_2$.

\subsubsection{Law of the Maximum}

In the following we assume that $\zeta \in [\underline{z}, \overline{z}]$ and $y \in [\underline{y}, \bar{y}]$. Otherwise $\tilde c^3 = c^3$ and the arguments from Sections \ref{sec:Main Assumption and Definitions} and \ref{sec:iAY} apply.
We have $\tilde{\xi}_3(y) < \xi_2(y)$ and $\bar{M}_1 = \bar{M}_2$ on $\{ \bar{M}_1 \in [ \underline{y}, \bar{y} ] \}$.
 
Note the obvious decomposition 
\begin{align*}
\PP \left[ \bar{M}_3 \geq y \right] = \PP \left[ \bar{M}_{1} < y, \bar{M}_3 \geq y \right] + \PP \left[ \bar{M}_{1} \geq y \right].
%\label{eq:extension_trivial_decomposition}
\end{align*}
 
We compute by similar excursion theoretical arguments as in the proof of Lemma \ref{lem:ODE_for_the_Maximum}, 
\begin{equation}
\begin{split}
&\restr{\frac{\partial \PP \left[ \bar{M}_{1} <y, \bar{M}_3 \geq m \right]}{\partial y}}{m=y} =: p(\tilde{\xi}_3(y),y;\tau'_2) \\ 
=&\frac{ \PP\left[ M_2>\tilde{\xi}_3(y), \bar{M}_1 \in \dd y \right]}{\dd y} \frac{\beta(\tilde{\xi}_3(y),y;\tau'_2) - \tilde{\xi}_3(y)}{y-\tilde{\xi}_3(y)}
%\cdot \PP \left[ H_y \circ \theta_{\tau'_2} < H_{\tilde{\xi}_3(y)} \circ \theta_{\tau'_2} \big\vert M_2>\tilde{\xi}_3(y), \bar{M}_1 = y \right]   
\end{split}
\label{eq:definition_p_probability} 
\end{equation}
%where as above $H_x:=\inf\left\{ t \geq 0:B_t=x \right\}$ and

In analogy to \eqref{eq:right_derivative_aux}, and because $\tilde{\xi}_3(y)<\xi_2(y)$,
\begin{align*}
\restr{\frac{\partial \PP \left[ \bar{M}_{1} < m, \bar{M}_3 \geq y \right]}{\partial y}}{m=y} = -\frac{\PP \left[ \bar{M}_3 \geq y \right] - \PP\left[ \bar{M}_1 \geq y \right] }{y-\tilde{\xi}_3(y)}.
\end{align*}

Hence, combining the above
\begin{align}
\frac{\partial}{\partial y}\PP \left[ \bar{M}_3 \geq y \right] 
&= p(\tilde{\xi}_3(y),y;\tau'_2)  -\frac{\PP \left[ \bar{M}_3 \geq y \right] - \PP\left[ \bar{M}_1 \geq y \right] }{y-\tilde{\xi}_3(y)}   + \frac{\partial \PP\left[ \bar{M}_1 \geq y \right]}{\partial y}  \nonumber \\
&\stackrel{\mathclap{\eqref{eq:ODE_law_maximum}}}{=} -\frac{\PP \left[ \bar{M}_3 \geq y \right] }{y-\tilde{\xi}_3(y)} -  \frac{\tilde{\xi}_3(y)-\xi_1(y)}{y-\tilde{\xi}_3(y)} \frac{\partial \PP\left[ \bar{M}_1 \geq y \right]}{\partial y} +   p(\tilde{\xi}_3(y),y;\tau'_2).
\label{eq:extension_M_3_derivative}
\end{align}

In the redefined domain the first order condition for optimality of $\tilde{\xi}_3(y)$ reads
\begin{align}
\tilde{K}_3(y) +  c'_3(\tilde{\xi}_3(y)) - \frac{\partial f^{\mathrm{iAY}}}{\partial \zeta}(\tilde{\xi}_3(y),y;\tau'_2)  = 0.
\label{eq:optimality_tilde_xi}
\end{align}  

By similar calculations as in \eqref{eq:ODE_K_N_proof} below we have
\begin{alignat}{3}
\tilde{K}'_3(y) \stackrel{\mathclap{\eqref{eq:optimality_tilde_xi}}}{=} &-&& \frac{\tilde{K}_3(y)}{y-\tilde{\xi}_3(y)} - \frac{\frac{\partial f^{\mathrm{iAY}}}{\partial y}(\tilde{\xi}_3(y),y;\tau'_2)}{y-\tilde{\xi}_3(y)} \nonumber \\
				\stackrel{\mathclap{\eqref{eq:partial_derivative_f_y}}}{=} & - && \frac{\tilde{K}_3(y)}{y-\tilde{\xi}_3(y)} + \frac{ \tilde{\xi}_3(y) - \alpha(\tilde{\xi}_3(y),y) }{y-\tilde{\xi}_3(y)} \frac{\PP\left[ \bar{M}_1 \in \dd y, M_2<\tilde{\xi}_3(y) \right]}{\dd y} \nonumber \\
				\stackrel{\mathclap{\eqref{eq:relation_alpha_beta}}}{=} &-&& \frac{\tilde{K}_3(y)}{y-\tilde{\xi}_3(y)} + \frac{ \tilde{\xi}_3(y) - \xi_1(y) }{y-\tilde{\xi}_3(y)}\frac{\PP\left[ \bar{M}_1 \in \dd y  \right]}{\dd y} \nonumber \\
				&+&& \frac{ \beta(\tilde{\xi}_3(y), y) - \tilde{\xi}_3(y) }{y-\tilde{\xi}_3(y)} \frac{\PP\left[ \bar{M}_1 \in \dd y, M_2 \geq \tilde{\xi}_3(y) \right]}{\dd y} \nonumber \\
				\stackrel{\mathclap{\eqref{eq:definition_p_probability}}}{=} &-&& \frac{\tilde{K}_3(y)}{y-\tilde{\xi}_3(y)} - \frac{ \tilde{\xi}_3(y) - \xi_1(y) }{y-\tilde{\xi}_3(y)} \frac{\partial \PP\left[ \bar{M}_1 \geq y  \right]}{\partial y} + p(\tilde{\xi}_3(y),y;\tau'_2). 
				\label{eq:extension_ODE_K_3}
\end{alignat}

Consequently, by comparing \eqref{eq:extension_M_3_derivative} and \eqref{eq:extension_ODE_K_3}, and in conjunction with Proposition \ref{prop:Maximizing the Maximum}, we obtain
\begin{align}
\tilde{K}_3(y) = \PP\left[ \bar{M}_3 \geq y \right], \qquad \text{for all $y \geq 0$.}
\label{eq:extension_maximum_identification}
\end{align}

\subsubsection{Embedding Property}

After having found the distribution of the maximum, the final step is to prove the embedding property. 
To achieve this we will need that $\tilde{\xi}_3$ is non-decreasing.% which we argue by using the (formal) first and second order conditions for optimality of $\tilde{\xi}_3(y)$.
  
Recall the first order condition of optimality of $\tilde{\xi}_3$ in \eqref{eq:optimality_tilde_xi}  
and then the second order condition for optimality of $\tilde{\xi}_3(y)$ reads
\begin{align}
c''_3(\tilde{\xi}_3(y)) - \frac{\partial^2 f^{\mathrm{iAY}}}{\partial \zeta^2}(\tilde{\xi}_3(y),y;\tau'_2) \geq 0.
\label{eq:seond_order_condition_optimality}
\end{align}

Now, differentiating \eqref{eq:optimality_tilde_xi} in $y$ yields
\begin{align*}
\tilde{K}'_3(y) + c''_3(\tilde{\xi}_3(y)) \tilde{\xi}'_3(y) - \frac{\partial^2 f^{\mathrm{iAY}}}{\partial \zeta^2}(\tilde{\xi}_3(y),y;\tau'_2) \tilde{\xi}'_3(y) - \frac{\partial^2 f^{\mathrm{iAY}}}{\partial \zeta \partial y}(\tilde{\xi}_3(y),y;\tau'_2) = 0 
\end{align*}
or equivalently,
\begin{align*}
 \tilde{\xi}'_3(y)  \underbrace{ \left( c''_3(\tilde{\xi}_3(y))  - \frac{\partial^2 f^{\mathrm{iAY}}}{\partial \zeta^2}(\tilde{\xi}_3(y),y;\tau'_2) \right) }_{\geq 0 \text{ by \eqref{eq:seond_order_condition_optimality}}}   = - \tilde{K}'_3(y) + \frac{\partial^2 f^{\mathrm{iAY}}}{\partial \zeta \partial y}(\tilde{\xi}_3(y),y;\tau'_2)
\end{align*}

In order to formally infer
\begin{align*}
\tilde{\xi}'_3(y) \geq 0
\end{align*}
we require
\begin{align}
- \tilde{K}'_3(y) + \frac{\partial^2 f^{\mathrm{iAY}}}{\partial \zeta \partial y}(\tilde{\xi}_3(y),y;\tau'_2) \geq 0.
\label{eq:condition_monotonicity_tilde_xi}
\end{align}

Direct computation shows that
\begin{align*}
\frac{\partial^2 f^{\mathrm{iAY}}}{\partial \zeta \partial y}(\zeta,y;\tau'_2) &= -\frac{\PP\left[ \bar{M}_1 \in \dd y, M_2 < \zeta \right]}{\dd y} %\label{eq:partial_derivative_f_x_y}
\end{align*} 
and by \eqref{eq:extension_maximum_identification},
\begin{align*}
- \tilde{K}'_3(y) = \frac{\Prob{ \bar{M}_3  \in \dd y}}{\dd y}
\end{align*}
which implies \eqref{eq:condition_monotonicity_tilde_xi} and hence that $\tilde \xi_3$ is non-decreasing.

By definition of the embedding in \eqref{eq:extension_tau_3}, and since $\tilde{\xi}_3$ is non-decreasing, we have
\begin{alignat}{3}
\PP \left[ \bar{M}_3 \geq y \right] &= \hspace{3mm} &&\PP \left[ M_3 \geq \tilde{\xi}_3(y) \right] + \PP\left[ \bar{M}_3 \geq y, M_3 < \tilde{\xi}_3(y) \right] \nonumber \\
									 &= &&\PP \left[ M_3 \geq \tilde{\xi}_3(y) \right] + \PP\left[ \bar{M}_1 \geq y, M_2 < \tilde{\xi}_3(y) \right] \nonumber \\
%									 &= &&\PP \left[ M_3 \geq \tilde{\xi}_3(y) \right] %+ \PP\left[ \bar{M}_1 \geq y, M_2 < \tilde{\xi}_3(y) \right] \nonumber \\
&\stackrel{\mathclap{\eqref{eq:partial_derivative_f_x}}}{=} &&\PP \left[ M_3 \geq \tilde{\xi}_3(y) \right] + \frac{\partial f^{\mathrm{iAY}}}{\partial \zeta}(\tilde{\xi}_3(y),y;\tau'_2).
									 \label{eq:extension_decomposition_of_maximum}
\end{alignat}
and then, by \eqref{eq:extension_maximum_identification}, \eqref{eq:optimality_tilde_xi} and \eqref{eq:extension_decomposition_of_maximum},
\begin{align*}
-c'_3(\tilde{\xi}_3(y)) = \PP\left[ M_3 \geq \tilde{\xi}_3(y) \right]
%\label{eq:extension_embedding}
\end{align*}
which is the desired embedding property.
%If $\tilde{\xi}_3$ happens to be discontinuous, then we cannot guarantee the embedding property for $\mu_3$. However, we expect that similar arguments as those in \citet[Section 3.5]{Brown98themaximum} will yield a stopping time $\tau'_3$ which does the job for the jump-intervals of $\tilde{\xi}_3$.
%We do not pursue these matters here. 

The above construction hinged on the appropriate choice of the auxiliary term $F$ in \eqref{eq:aux_term} whose expectation, as follows from  \eqref{eq:redefintion_tilde_c_3_a}, \eqref{eq:def_tildeK3} and \eqref{eq:extension_maximum_identification}, allows for the error book keeping, as suggested in \eqref{eq:book_keeping}. We identified the correct $F$ by analysing the ``error terms" which cause strict inequality for $(B_u:u\leq \tau_2')$ in the pathwise inequality (4.1) of \citet{Touzi_maxmax}. This is natural since this inequality is used to prove optimality of our embedding. It gives an upper bound but fails to be sharp if condition (ii) in Assumption \ref{ass:unicity_minimizers} does not hold. In order to recover a sharp bound one has to look at the error terms causing strict inequality when Assumption \ref{ass:unicity_minimizers} fails. The same principle applies for $n>3$. However then interactions between discontinuities of boundaries $\xi_2,\tilde \xi_3$ etc come into play and the relevant terms become very involved. The construction would become increasingly technical and implicit and we decided to stop at this point.

\appendix

\section{Appendix: Proof of Lemma \ref{lem:ODE_K_n}}
\label{sec:appendix_1}

In order to prove Lemma \ref{lem:ODE_K_n} we require to prove, inductively, several auxiliary results along the way.
We now state and prove a Lemma which contains the statement of Lemma \ref{lem:ODE_K_n}.

\begin{Lemma}
%\label{lem:ODE_K_n}
Let $n\in \N$ and let Assumption \ref{ass:unicity_minimizers} hold. 
Then 
\begin{align}
y \mapsto K_n(y) \qquad \text{is absolutely continuous and non-increasing.}
\end{align}

If we assume in addition that the embedding property of Theorem \ref{thm:main_result} is valid for the first $n-1$ marginals then for almost all $y \geq 0$ we have: 

If $\xi_n(y)<y$ then
\begin{align}
K'_n(y)  + \frac{K_n(y)}{y-\xi_n(y)} = K'_{\jmath_n(y)}(y) + \frac{K_{\jmath_n(y)}(y)}{y-\xi_n(y)}
\label{eq:app_ODE_K_n}
\end{align}
where $K'_j$ denotes the derivative of $K_j$ which exists for almost all $y \geq 0$ and $j=1,\dots,n$.

If $\xi_n(y)=y$ then
\begin{align}
K_n(y+) = K_{\jmath_n(y)}(y+).
\label{eq:app_ODE_K_n_2}
\end{align}

For $x>0$ the mapping 
\begin{align}
c^n: (x,\infty) \to \R, \quad y \mapsto c^n(x,y) 
\end{align}
is locally Lipschitz continuous, non-decreasing and for almost all $y>0$
\begin{align}
\restr{\frac{\partial c^n}{\partial y}(x,y)}{x=\xi_n(y)} = K_{\jmath_n(y)}(y) + (y-\xi_n(y)) K'_{\jmath_n(y)}(y).
\label{eq:ODE_c_n}
\end{align}

The mapping $c^n(\cdot,y)$ is locally Lipschitz continuous and if it is differentiable at $\xi_n(y)$ and $\xi'_n(y)>0$ then for almost all $y \geq 0$  
\begin{align}
K_n(y) + c'_n(\xi_n(y)) - c'_{j}(\xi_n(y)) - K_{j}(y) = 0 % \quad \text{for $j=\imath_n(\xi_n(y)+;y)$.}
\label{eq:optimality_xi}
\end{align}
for $j=\jmath_n(y)$ and $j$ such that $n>j>\jmath_n(y)$ and $\xi_n(y) = \xi_j(y)$.
In the case of non-smoothness of $c^n(\cdot,y)$ at $\xi_n(y)$ we have
\begin{align}
\xi'_n(y)=0
\label{eq:constant_xi_n}
\end{align}
for $y$ such that the slope of the supporting tangent to $c^n(\cdot,y)$ at $\xi_n(y)$ which crosses the $x$-axis at $y$ does not equal the right-derivative of $c^n(\cdot,y)$ at $\xi_n(y)$.
\end{Lemma}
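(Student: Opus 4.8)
The plan is to prove all the assertions simultaneously by induction on $n$, exploiting the inductive structure noted in Remark \ref{rem:inductive_proof}: $\xi_i, K_i$ for $i<n$ are already defined and, by the inductive hypothesis of the induction internal to this appendix, satisfy the full list of conclusions. The base case $n=1$ is the Azéma–Yor situation, where $\xi_1=b_1^{-1}$, $K_1(y)=c_1(b_1^{-1}(y))/(y-b_1^{-1}(y))$, and the listed properties are classical (see \citet{AzemaYor1, Brown98themaximum}). So assume everything holds up to $n-1$ and fix a maximal level $y$ at which $\xi_1,\dots,\xi_n$ are continuous (Lemma \ref{lem:continuity_c_power_n}) and $\xi_n$ is differentiable; this excludes only a null set.

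First I would establish \eqref{eq:ODE_c_n}, the Lipschitz continuity and monotonicity of $y\mapsto c^n(x,y)$ for fixed $x>0$. From the definition \eqref{eq:def_c_n}, $c^n(x,y)=c_n(x)-c_{\imath_n(x;y)}(x)+(y-x)K_{\imath_n(x;y)}(y)$; for $x$ fixed, $\imath_n(x;\cdot)$ is right-continuous with finitely many jumps \eqref{eq:right_continuity_imath}, so on each interval of constancy of $\imath_n(x;\cdot)$ the map $y\mapsto c^n(x,y)$ is $(y-x)K_j(y)+\text{const}$, which by the inductive absolute continuity of $K_j$ is locally Lipschitz; at a jump of $\imath_n(x;\cdot)$, say from $j$ to $k$ as $y$ crosses $y_0=\xi_k(\cdot)^{-1}$-type level, continuity is exactly the content of the computation in the proof of Lemma \ref{lem:continuity_c_power_n} (where \eqref{eq:continuity_1}--\eqref{eq:continuity_3} were shown to agree), so there is no jump. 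Differentiating on intervals of constancy gives $\partial_y c^n(x,y)=K_{\imath_n(x;y)}(y)+(y-x)K'_{\imath_n(x;y)}(y)$; evaluating at $x=\xi_n(y)$ and recalling $\jmath_n(y)=\imath_n(\xi_n(y);y)$ yields \eqref{eq:ODE_c_n}. Monotonicity in $y$ follows because $K_j\geq 0$ and, by the inductive ODE \eqref{eq:app_ODE_K_n} together with $K_j\geq 0$, the right-hand side $K_j+(y-x)K_j'$ is $\geq 0$ (this needs the telescoping comparison $K_j'+K_j/(y-\xi_j)=K_{\jmath_j}'+K_{\jmath_j}/(y-\xi_j)\geq 0$, iterated down to index $0$).

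Next I would derive \eqref{eq:optimality_xi} and \eqref{eq:app_ODE_K_n}. Adopt the tangent picture of \eqref{eq:definition_xi_n_tangent_interpretation}: $\xi_n(y)$ minimises $\zeta\mapsto c^n(\zeta,y)/(y-\zeta)$, equivalently the supporting line to $c^n(\cdot,y)$ through $(y,0)$ touches at $\xi_n(y)$. When $c^n(\cdot,y)$ is differentiable at $\xi_n(y)$ and $\xi_n'(y)>0$, the first–order condition reads $c^n(\xi_n(y),y)/(y-\xi_n(y)) = -\partial_\zeta c^n(\xi_n(y),y)$; since $\partial_\zeta c^n(\zeta,y)=c_n'(\zeta)-c_{\jmath_n(y)}'(\zeta)-K_{\jmath_n(y)}(y)$ on the relevant branch and the left side is $K_n(y)$ by \eqref{eq:definition_Kn}, this is precisely \eqref{eq:optimality_xi}; the same identity holds for any intermediate $j$ with $\xi_j(y)=\xi_n(y)$, $\jmath_n(y)<j<n$, because at such $y$ the boundaries coincide and $\imath_n$ can be peeled off one index at a time as in the "relax \eqref{eq:continuity_proof_special_case_1}" step of Lemma \ref{lem:continuity_c_power_n}. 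To get \eqref{eq:app_ODE_K_n}, differentiate the identity $K_n(y)(y-\xi_n(y)) = c^n(\xi_n(y),y)$ in $y$: the right side has total derivative $\partial_\zeta c^n(\xi_n(y),y)\,\xi_n'(y)+\partial_y c^n(\xi_n(y),y)$, the first term vanishes against $K_n(y)\cdot(-\xi_n'(y))+\dots$ by the envelope/first–order condition \eqref{eq:optimality_xi} (this is the usual cancellation at an optimum), leaving
\begin{align*}
K_n'(y)(y-\xi_n(y)) + K_n(y) = \partial_y c^n(\xi_n(y),y) = K_{\jmath_n(y)}(y) + (y-\xi_n(y))K_{\jmath_n(y)}'(y),
\end{align*}
which after dividing by $y-\xi_n(y)$ is exactly \eqref{eq:app_ODE_K_n}. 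This is the calculation abbreviated as \eqref{eq:ODE_K_N_proof} in the main text. The boundary/degenerate case $\xi_n(y)=y$ gives \eqref{eq:app_ODE_K_n_2} from left-continuity of $c^n$ at $(y,y)$ together with the case distinction in \eqref{eq:extension_c_n}: if $\zeta^\star=y$ then $c_n(y)=c_{\imath_n(y;y)}(y)$ and the value $K_n(y+)$ must equal $K_{\jmath_n(y)}(y+)$ for $K_n$ to be bounded, exactly as in the $\tilde\xi_n=y$ sub-case of Lemma \ref{lem:continuity_c_power_n}. Absolute continuity of $K_n$ then follows from the local Lipschitz bound on $\xi_n$-intervals plus the fact that $K_n$ is, on each such interval, determined by the absolutely continuous $K_{\jmath_n(\cdot)}$ via the linear ODE \eqref{eq:app_ODE_K_n}, whose coefficient $1/(y-\xi_n(y))$ is locally bounded away from $\infty$ since $\xi_n(y)<y$ there; non-increase of $K_n$ follows from \eqref{eq:app_ODE_K_n} and the inductive non-negativity of $K_j'+K_j/(y-\xi_n)$ type expressions, i.e. $K_n' \le K_{\jmath_n}' + (K_{\jmath_n}-K_n)/(y-\xi_n)$ and a comparison argument showing the right side is $\le 0$.

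Finally \eqref{eq:constant_xi_n}: if $c^n(\cdot,y)$ has a kink at $\xi_n(y)$ and the supporting line through $(y,0)$ has slope strictly below the right-derivative (so the minimiser sits at a corner where the subdifferential is a nondegenerate interval), then for $y'$ slightly larger the perturbation $c^n(\cdot,y')-c^n(\cdot,y)$ is, on a neighbourhood, affine (its slope is $K_{\jmath_n(y)}(y)-K_{\jmath_n(y)}(y')$ plus lower-order; the key point, as in the monotonicity proof of Lemma \ref{lem:Monotonicity_of_Stopping_Boundaries}, is that $\imath_n$ does not change on a small ball so $c^n(\cdot,\cdot)$ is affine in the $x$ variable's "overhead" term), hence the corner persists and the touching point of the new supporting line stays pinned at $\xi_n(y)$ — giving $\xi_n'(y)=0$. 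I would make this precise by the same supporting-tangent argument ($l_1+l_2$) used in Lemma \ref{lem:Monotonicity_of_Stopping_Boundaries}, reading off that the minimiser cannot move to either side.

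The main obstacle is the bookkeeping of the index $\imath_n$ and the "peeling" needed to reduce every statement to the case where $\jmath_n(y)$ is the unique relevant predecessor index: one must carefully track, at levels $y$ where several boundaries $\xi_j(y)$ coincide with $\xi_n(y)$, that the recursive substitution of $K_k(y)=\big(c_k(\xi_k(y))-c_{\jmath_k(y)}(\xi_k(y))+(y-\xi_k(y))K_{\jmath_k(y)}(y)\big)/(y-\xi_k(y))$ telescopes correctly — this is where Assumption \ref{ass:unicity_minimizers}(ii), via uniqueness of the minimiser and \eqref{eq:assumption_strictly_ordered_calls}, is doing real work, and it is the same mechanism that made the continuity proof go through. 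Everything else is the envelope theorem plus the inductive hypothesis.
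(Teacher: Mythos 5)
Your overall strategy -- induction on $n$, Lipschitz and monotonicity estimates for $y\mapsto c^n(x,y)$, the first-order condition, an envelope-type differentiation of the identity $K_n(y)(y-\xi_n(y))=c^n(\xi_n(y),y)$, and a separate treatment of kinks -- is the same as the paper's, but your proposal has genuine gaps at exactly the points where the appendix proof does its real work, namely where $\xi_n(y)$ coincides with an earlier boundary. For \eqref{eq:constant_xi_n} you assert that $\imath_n$ does not change on a small ball around $(\xi_n(y),y)$, so that $c^n(\cdot,y')-c^n(\cdot,y)$ is affine near the kink and the corner persists. This is false in the principal case: a kink of $c^n(\cdot,y)$ at $\xi_n(y)$ typically arises because $\xi_n(y)=\xi_k(y)$ for some $k<n$, so $\imath_n(\cdot;y)$ jumps at $x=\xi_n(y)$ and the perturbation is only piecewise affine, with the kink location moving with $y'$. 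The paper instead splits the minimisation and bounds the perturbation below by the affine function $l(\zeta,y;\delta)$ with equality at $\zeta=\xi_n(y)$, cf.\ \eqref{eq:lower_bound_line-difference_c_n}; some such substitute is indispensable. The same coincidence of boundaries is glossed over when you evaluate $\partial_y c^n(x,y)$ at $x=\xi_n(y)$ and when you invoke the envelope theorem for \eqref{eq:app_ODE_K_n}: at such $y$ the index $\imath_n(\xi_n(y);\cdot)$ jumps at $y$ itself and $\jmath_n(y+\delta)$ need not equal $\jmath_n(y)$, so the left and right difference quotients a priori involve different $K$'s; the paper reconciles them through the inductive ODE for $K_k$, a subsequence $\delta_m$ along which $\jmath_n(y+\delta_m)$ is constant, and the identity \eqref{eq:relations_boundary_crossing}. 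You flag this as the ``main obstacle'' but do not execute it, and your case analysis for \eqref{eq:app_ODE_K_n} also omits the intermediate kink case where the slope of the supporting tangent equals the right-derivative of $c^n(\cdot,y)$ (there $\xi_n'$ need not vanish, and the paper derives the ODE via a one-sided version of \eqref{eq:optimality_xi}).

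There is also a structural problem: you obtain monotonicity of $y\mapsto c^n(x,y)$ (and hence of $K_n$) from the inductive ODE for $K_j$, $j<n$, via the telescoped inequality $K_j+(y-x)K_j'\ge 0$. But the ODE part of the lemma is available by induction only under the additional hypothesis that the embedding property holds for the earlier marginals, whereas absolute continuity and monotonicity of $K_n$ are asserted unconditionally. The paper avoids this by proving the monotonicity and Lipschitz bounds directly from the minimising property of the earlier boundaries together with the inductive monotonicity and Lipschitz continuity of $K_i$ and $c^i(x,\cdot)$, cf.\ \eqref{eq:ineq_delta_pos}--\eqref{eq:monotonicity_c_n_special_case}, without ever using the ODE; your route proves only a conditional version of the unconditional claim. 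A smaller point in the same direction: to get the asserted local Lipschitz continuity of $c^n(x,\cdot)$ you need local Lipschitz continuity (not merely absolute continuity) of the $K_j$ in the induction hypothesis, which is what the paper actually propagates.
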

 
\begin{proof}
We prove the claim by induction over $n$. The induction basis $n=1$ holds by definition and Lemma 2.6 of \citet{Brown98themaximum}. 

Now assume that the claim holds for all $i=1,\dots,n-1$.

\textit{Induction step for $c^n$.}
We have
\begin{equation}
\begin{split}
 c^n(x,y+\delta) - c^n(x,y) = &- \left[ c_{\imath_n(x;y+\delta)}(x) - (y+\delta-x)K_{\imath_n(x;y+\delta)}(y+\delta) \right] \\
 							   &+ \left[ c_{\imath_n(x;y)}(x) - (y-x)K_{\imath_n(x;y)}(y) \right].
\end{split}
\label{eq:local_Lipschitz_c_n}
\end{equation}

Firstly, consider the case when there exists a $\delta'>0$ such that for all $|\delta|<\delta'$ we have $\imath_n(x;y) = \imath_n(x;y+\delta)$.
Equation \eqref{eq:local_Lipschitz_c_n} simplifies and we have
\begin{alignat}{3}
 & &&c^n(x,y+\delta) - c^n(x,y) =  (y+\delta-x)K_{\imath_n(x;y)}(y+\delta) - (y-x)K_{\imath_n(x;y)}(y) \nonumber \\
=& && (y+\delta-\xi_{\imath_n(x;y)}(y))K_{\imath_n(x;y)}(y+\delta) - (y-\xi_{\imath_n(x;y)}(y)) K_{\imath_n(x;y)}(y) \nonumber \\
 &+&& (x-\xi_{\imath_n(x;y)}(y)) \left[ K_{\imath_n(x;y)}(y) - K_{\imath_n(x;y)}(y+\delta) \right] \nonumber  \\
 \stackrel{\mathclap{\eqref{eq:definition_xi_n_tangent_interpretation}}}{\leq} & && c^{\imath_n(x;y)}(\xi_{\imath_n(x;y)}(y),y+\delta) - c^{\imath_n(x;y)}(\xi_{\imath_n(x;y)}(y),y) \nonumber   \\ &+&& (y-\xi_{\imath_n(x;y)}(y)) \left[ K_{\imath_n(x;y)}(y) - K_{\imath_n(x;y)}(y+\delta) \right]  \nonumber\\
\leq & && \max_{i<n} \Big\{ c^{i}(\xi_{i}(y),y+\delta) - c^{i}(\xi_{i}(y),y)    + (y-\xi_{i}(y)) \left[ K_{i}(y) - K_{i}(y+\delta) \right] \Big\} \nonumber \\
\leq & && \mathrm{const}(y) \cdot |\delta|
\label{eq:ineq_delta_pos}
\end{alignat} 
by induction hypothesis and where $\mathrm{const}(y)$ denotes a constant depending on $y$.
A similar computation shows for $|\delta|$ small enough
\begin{alignat}{3}
& &&c^n(x,y) - c^n(x,y+\delta) \nonumber \\
 \stackrel{\mathclap{\eqref{eq:definition_xi_n_tangent_interpretation}}}{\leq} & && c^{\imath_n(x;y)}(\xi_{\imath_n(x;y)}(y+\delta),y) - c^{\imath_n(x;y)}(\xi_{\imath_n(x;y)}(y+\delta),y+\delta)  \nonumber \\ 
 &+&& (x-\xi_{\imath_n(x;y)}(y+\delta)) \left[ K_{\imath_n(x;y)}(y+\delta) - K_{\imath_n(x;y)}(y) \right] \nonumber \\
= & && (y - \xi_{\imath_n(x;y)}(y+\delta) ) \left[ K_{\imath_{\imath_n(x;y)}(x;y)}(y) - K_{\imath_{\imath_n(x;y)}(x;y)}(y+\delta) \right] - \delta K_{\imath_{\imath_n(x;y)}(x;y)}(y+\delta) \nonumber \\ 
 &+&& (x-\xi_{\imath_n(x;y)}(y+\delta)) \left[ K_{\imath_n(x;y)}(y+\delta) - K_{\imath_n(x;y)}(y) \right] \nonumber \\
\leq & && \begin{cases} 
\mathrm{const}(y) \cdot |\delta| & \text{if $\delta<0$,} \\
0 & \text{if $\delta \geq 0$,}
\end{cases} 
\label{eq:monotonicity_c_n_special_case} 
\end{alignat}
again by induction hypothesis and continuity of $\xi_i$ which indeed allows the constant to be chosen independently of $\delta$.
Monotonicity of $c^n(x,\cdot)$ follows.
Equation \eqref{eq:monotonicity_c_n_special_case} together with \eqref{eq:ineq_delta_pos} imply the local Lipschitz continuity. 
Plugging $x=\xi_n(y)$ into \eqref{eq:local_Lipschitz_c_n}, direct computation shows that  \eqref{eq:ODE_c_n} holds.

Secondly, consider the case when $\imath_n(x;\cdot)$ jumps at $y$. 
Recall \eqref{eq:right_continuity_imath}. 
Note that this is only possible when $x$ satisfies 
\begin{align}
x=\xi_{\imath_n(x;y-\delta)}(y)  \qquad \text{for $\delta>0$ small enough,}
\label{eq:jump_point_x}
\end{align}
i.e. when $x=\xi_k(y)$ for the index $k=\imath_n(x;y-\delta)>\jmath_n(y)$.
By \eqref{eq:right_continuity_imath} there exists a $\delta'>0$ such that $\imath_n(x;y+\delta) = \imath_n(x;y)$ for all $0 \leq \delta < \delta'$. 
Hence, for $\delta>0$ small enough, $|c^n(x,y+\delta)-c^n(x,y)|$ has the same upper bound as in the first case. Monotonicity of $c^n(x,\cdot)$ follows.

Furthermore, for $\delta>0$ we have for the $x$ from \eqref{eq:jump_point_x} that $\imath_n(x;y-\delta)>\imath_n(x;y)$ holds. For notational simplicity we only consider the case
\begin{align*}
 \imath_{\imath_n(x;y-\delta)}(x;y-\delta) = \imath_n(x;y).
\end{align*}  
The general case follows by the same arguments.
We deduce from \eqref{eq:local_Lipschitz_c_n} the following two equations,
\begin{align}
 c^n(x,y-\delta) - c^n(x,y) & \stackrel{\mathclap{\eqref{eq:definition_xi_n_tangent_interpretation}}}{\leq}  (y-\delta-x) K_{\imath_{\imath_n(x;y-\delta)}(x;y-\delta)}(y-\delta) - (y-x) K_{\imath_n(x;y)}(y) \nonumber \\
 							 &=(y-\delta-x) K_{\imath_n(x;y)}(y-\delta) - (y-x) K_{\imath_n(x;y)}(y)
\label{eq:ineq_delta_neg}
\end{align}
and  
\begin{align*}
 c^n(x,y) - c^n(x,y-\delta) \stackrel{\eqref{eq:jump_point_x}}{=}  -(y-\delta-x) K_{\imath_n(x;y-\delta)}(y-\delta) + (y-x) K_{\imath_n(x;y-\delta)}(y).  
\end{align*}
Now the local Lipschitz continuity of $c^n(x,\cdot)$ follow from the above two equations by repeating the arguments from the first case.

We prove \eqref{eq:ODE_c_n} by computing the required right- and left-derivative of $c^n(x,\cdot)$ at $x=\xi_n(y)$. 
The right-derivative is simply, using \eqref{eq:right_continuity_imath} and \eqref{eq:local_Lipschitz_c_n},
\begin{align}
K_{\jmath_n(y)}(y) + (y-\xi_n(y)) K'_{\jmath_n(y)}(y)
\end{align}
and the left-derivative is, writing $k=\imath_n(\xi_n(y);y-) > \imath_n(\xi_n(y);y)=\jmath_n(y)$,
\begin{alignat}{3}
& \hspace{7mm} &&\lim_{\delta \uparrow 0}  \frac{1}{\delta}\Big( -c_k(\xi_n(y)) + ( y+\delta - \xi_n(y) ) K_k(y+\delta)  \nonumber \\
& &&\hspace{13mm} + c_{\jmath_n(y)}(\xi_n(y)) - (y-\xi_n(y)) K_{\jmath_n(y)}(y) \Big) \nonumber \\
\stackrel{\mathclap{\xi_n(y)=\xi_k(y)}}{=}& &&\lim_{\delta \uparrow 0} \frac{1}{\delta} \Big( ( y+\delta - \xi_n(y) ) K_k(y+\delta) - ( y - \xi_n(y) ) K_k(y) \Big) \nonumber \\
=& && K_{k}(y) + (y-\xi_n(y)) K'_{k}(y) \stackrel{\eqref{eq:app_ODE_K_n}}{=} K_{\jmath_n(y)}(y) + (y-\xi_n(y)) K'_{\jmath_n(y)}(y)
\end{alignat}
by induction hypothesis. So the two coincide for almost all $y > 0$.

\textit{Induction step for $K_n$.} 
A straightforward computation shows that the mapping $y \mapsto \frac{c^n(x,y)}{y-x}$
is non-increasing and hence for $\delta>0$
\begin{align*}
K_n(y+\delta) = \inf_{\zeta \leq y+\delta}\frac{c^n(\zeta,y+\delta)}{y+\delta-\zeta} \leq \inf_{\zeta \leq y}\frac{c^n(\zeta,y+\delta)}{y+\delta-\zeta} \leq \inf_{\zeta \leq y}\frac{c^n(\zeta,y)}{y-\zeta} = K_n(y)
\end{align*}
proving that $K_n$ is non-increasing.

Using again that $c^n(x,\cdot)$ is non-decreasing and that $\xi_n$ is continuous, local Lipschitz continuity of $K_n$ now follows from
\begin{align*}
K_n(y) \leq \frac{c^n(\xi_n(y+\delta),y)}{y-\xi_n(y+\delta)} \leq   \frac{c^n(\xi_n(y+\delta),y+\delta)}{y-\xi_n(y+\delta)} = K_n(y+\delta)\left( 1+\frac{\delta}{y-\xi_n(y+\delta)}\right)
\end{align*}
if $\xi_n(y)<y$ and if $\xi_n(y)=y$, recalling \eqref{eq:extension_c_n}, we have
\begin{alignat*}{3}
K_n(y+) =& \qquad && \inf_{\mathclap{\zeta \leq (y+)}}\hspace{5mm}\left\{\frac{c_n(\zeta) - c_{\imath_n(\zeta;y+)}(\zeta)}{(y+)-\zeta}+K_{\imath_n(\zeta;y+)}(y) \right\} \\
\stackrel{ \mathclap{\text{Lemma \ref{lem:Monotonicity_of_Stopping_Boundaries}}} }{=}& && \inf_{\mathclap{y \leq \zeta \leq (y+)}} \hspace{5mm}\left\{\frac{c_n(\zeta) - c_{\imath_n(\zeta;y+)}(\zeta)}{(y+)-\zeta}+K_{\imath_n(\zeta;y+)}(y+) \right\} = K_{\imath_n(y;y)}(y+)  \\
=& &&  K_{\jmath_n(y)}(y+),
\end{alignat*}
and local Lipschitz continuity of $K_n$ follows by induction hypothesis.
Equation \eqref{eq:app_ODE_K_n_2} is also proven.

%%%%%%%%%%%%%%%%%%%%%%%%%%%%%%%%%%%%%%%%%%%
%%%%%%%%%%%%%%%%%%%%%%%%%%%%%%%%%%%%%%%%%%%
%%%%%%%%%%%%%%%%%%%%%%%%%%%%%%%%%%%%%%%%%%%
\begin{comment}
A direct computation proves the left-limit version of \eqref{eq:optimality_xi} for $j=\jmath_n(y)$. Now fix some $n>l>\jmath_n(y)$ such that $\xi_n(y) = \xi_l(y)$. 
From this we conclude by induction hypothesis and the fact that $\jmath_n(y) = \jmath_l(y)$
\begin{align*}
0 = K_n(y) + c'_n(\xi_n(y) -) - c'_{l}(\xi_n(y) -) + \underbrace{c'_{l}(\xi_l(y) -) - c'_{\jmath_l(y)}(\xi_l(y) -) K_{\jmath_l(y)}(y) }_{=K_l(y)}  
\end{align*}
and \eqref{eq:optimality_xi} follows for $j$ such that $n>j>\jmath_n(y)$ and $\xi_n(y) = \xi_j(y)$.
\end{comment}

Local Lipschitz continuity of $c^n(\cdot,y)$ follows from the properties of $\imath_n$, cf.  \eqref{eq:left_continuity_imath}, the fact that the functions $c_i, i=1,\dots,n$, are locally Lipschitz and a similar expansion of terms in the case when $\xi_n(y)=\xi_i(y)$ for some $i<n$ 
%Let $k=\imath_n(\xi_n(y)+;y)$. This also shows that for any $y\geq 0$ kinks of $c^n(\cdot,y)$ come from kinks in $c_i,i=1,\dots,n$.

In order to prove \eqref{eq:optimality_xi} we first exclude all $y \geq 0$ such that $\xi_n(y)$ is an atom of $c_1,\dots,c_n$ and $\xi'_n(y)>0$. Amongst all $y \in \left\{ \xi'_n > 0 \right\}$ this is a null-set.
By assumption $c^n(\cdot,y)$ is differentiable at $\xi_n(y)$.
Recalling the equations \eqref{eq:continuity_1}--\eqref{eq:continuity_3}, a direct computation proves \eqref{eq:optimality_xi} for $k=\imath_n(\xi_n(y)+;y)$. 
Now we want to apply the induction hypothesis to $c^k$. By choice of $y$ we have that $c_k$ is differentiable at $\xi_n(y)= \xi_k(y)$, i.e. $\mu_k$ does not have an atom at $\xi_n(y)$. Hence, by the assumption that the embedding for the first $n-1$ marginals is valid we cannot have $\xi'_k(y)=0$ (except on a null-set because otherwise the embedding would fail). 
By \eqref{eq:constant_xi_n}, $c^{k}(\cdot,y)$ therefore has to be differentiable at $\xi_n(y)=\xi_k(y)$.
This shows that we can indeed apply \eqref{eq:optimality_xi} to deduce for $j=\jmath_n(y)$ and $j$ such that $n>j>\jmath_n(y)$ and $\xi_n(y) = \xi_k(y) = \xi_j(y)$ the following equation,
\begin{align*}
0&=K_n(y)+c'_n(\xi_n(y))-c'_k(\xi_n(y)) + K_k(y) \\
 &=K_n(y)+c'_n(\xi_n(y))-c'_k(\xi_n(y)) + c'_k(\xi_n(y)) - c'_j(\xi_n(y)) + K_j(y) \\
 &=K_n(y)+c'_n(\xi_n(y)) - c'_j(\xi_n(y))+ K_j(y).
\end{align*}
Equation \eqref{eq:optimality_xi} is proven.

For later use we note the equation
\begin{align}
\frac{c_n(\xi_n(y)) - c_{\jmath_n(y)}(\xi_n(y))}{y-\xi_n(y)} + K_{\jmath_n(y)}(y) =  K_n(y) = \frac{c_n(\xi_n(y)) - c_k(\xi_n(y))}{y-\xi_n(y)} + K_k(y)  
\label{eq:relations_boundary_crossing}
\end{align}
for $k$ such that $n>k>\jmath_n(y)$ and $\xi_n(y) = \xi_k(y)$.

Finally, we prove the claimed ODE for $K_n$  in the case $\xi_n(y)<y$. 
For almost all $y \geq 0$ we have 
\begin{align*}
K'_n(y) &= \lim_{\delta \to 0} \frac{1}{\delta} \left[ \frac{c^n(\xi_n(y+\delta),y+\delta)}{y+\delta-\xi_n(y+\delta)} - \frac{c^n(\xi_n(y),y)}{y-\xi_n(y)} \right]  \\
		&= \lim_{\delta \to 0} \frac{1}{\delta} \left[ \left( \frac{1}{y+\delta-\xi_n(y+\delta)} - \frac{1}{y-\xi_n(y)} \right) c^n(\xi_n(y+\delta),y+\delta) \right. \\
		&  \hspace{18mm} \left. + \frac{c^n(\xi_n(y+\delta),y+\delta) - c^n(\xi_n(y),y)}{y-\xi_n(y)} \right] \\
		&= \frac{\xi'_n(y)-1}{y-\xi_n(y)} K_n(y) + \frac{1}{y-\xi_n(y)}\left( \lim_{\delta \to 0} \frac{c^n(\xi_n(y+\delta),y+\delta) - c^n(\xi_n(y),y) }{\delta} \right).
\end{align*}

The main technical difficulty comes from the possibility that $\xi_n(y) = \xi_k(y)$ for some $k<n$. We present the arguments for this case and leave the other (much easier) case, to the reader.

By assumption the last limit exists and hence we can compute it using some \enquote{convenient} sequence $\delta_m \downarrow 0$ where $\delta_m$ is such that $\jmath_n(y+\delta_m) = l$ for all $m \in \N$. 
Note that by continuity of $\xi_1,\dots,\xi_n$ at $y$ we have that either $l=\jmath_n(y)$ or $l$ is such that $\xi_l(y) = \xi_n(y)$. This will enable us apply \eqref{eq:relations_boundary_crossing}.
Recall \eqref{eq:right_continuity_imath}. For $\delta_m$ small enough such that $\imath_n(\xi_n(y);y+\delta_m) = \jmath_n(y)$ we obtain
\begin{alignat*}{3}
 & && c^n(\xi_n(y+\delta_m),y+\delta_m) - c^n(\xi_n(y),y+\delta_m) \\ 
=& && c_n(\xi_n(y+\delta_m)) - c_l(\xi_n(y+\delta_m)) + (y+\delta_m - \xi_n(y+\delta_m)) K_l(y+\delta_m) \\
 &-&& c_n(\xi_n(y)) + c_{\jmath_n(y)}(\xi_n(y)) - (y+\delta_m - \xi_n(y))  K_{\jmath_n(y)}(y+\delta_m) \\
\stackrel{\mathclap{\eqref{eq:relations_boundary_crossing}}}{=} & &&  c_n(\xi_n(y+\delta_m)) - c_l(\xi_n(y+\delta_m)) + (y+\delta_m - \xi_n(y+\delta_m) ) K_l(y+\delta_m) \\
 &-&& c_n(\xi_n(y)) + c_{l}(\xi_n(y)) -(y-\xi_n(y)) (K_l(y) - K_{\jmath_n(y)}(y)) \\
 &-&& (y+\delta_m - \xi_n(y))  K_{\jmath_n(y)}(y+\delta_m).
\end{alignat*}
From this we obtain for almost all $y \geq 0$ by using the induction hypothesis 
\begin{alignat}{3}
 & && \lim_{m \to \infty} \frac{c^n(\xi_n(y+\delta_m),y+\delta_m) - c^n(\xi_n(y),y+\delta_m)}{\delta_m} \nonumber \\
=& && \xi'_n(y+) \Big[ c'_n(\xi_n(y)+) - c'_l(\xi_n(y)+) - K_l(y) \Big]  \nonumber \\
 &+&& K_l(y) + (y-\xi_n(y)) K'_l(y) - K_{\jmath_n(y)}(y) - (y-\xi_n(y)) K'_{\jmath_n(y)}(y) \nonumber \\
\stackrel[\mathclap{\eqref{eq:app_ODE_K_n}}]{\mathclap{\eqref{eq:optimality_xi}}}{=} & && -\xi'_n(y+) K_n(y).
\label{eq:computation_derivative}
\end{alignat}

Together with \eqref{eq:ODE_c_n} this yields in the case when $c^n(\cdot,y)$ is differentiable at $\xi_n(y)$
\begin{align}
K'_n(y) &= \frac{\xi'_n(y)-1}{y-\xi_n(y)} K_n(y) + \frac{1}{y-\xi_n(y)}\left( -K_n(y)\xi'_n(y) + \frac{\partial c^n}{\partial y}(\xi_n(y),y) \right) \nonumber \\
		&= -\frac{K_n(y)}{y-\xi_n(y)} + \frac{1}{y-\xi_n(y)} \Big( K_{\jmath_n(y)}(y) + (y-\xi_{n}(y))  K'_{\jmath_n(y)}(y) \Big).
		\label{eq:ODE_K_N_proof}
\end{align}
In order to finish the proof we just have to establish that \eqref{eq:ODE_K_N_proof} also holds in the case when $c^n(\cdot, y) $ is not differentiable at $\xi_n(y)$.

To this end, we first argue that \eqref{eq:computation_derivative}, and hence \eqref{eq:ODE_K_N_proof}, remains true in the case when $c^n(\cdot,y)$ is not differentiable at $\xi_n(y)$, but when the slope of the supporting tangent to $c^n(\cdot,y)$ at $\xi_n(y)$ which passes the $x$-axis at $y$ equals the right-derivative of $c^n(\cdot,y)$ at $\xi_n(y)$. In that case, denoting $k=\imath_n(\xi_n(y)+;y)$,
\begin{align}
c'_n(\xi_n(y)+) - c'_k(\xi_n(y)+) - K'_k(y) = -K_n(y).
\label{eq:optimality:right_continuous_version_1}
\end{align}
Recall the sequence $(\delta_m)$. We do not necessarily have $k=\jmath_n(y+\delta_m)=l$. Nevertheless, we argue that 
\begin{align}
c'_n(\xi_n(y)+) - c'_l(\xi_n(y)+) - K_l(y) = -K_n(y)
\label{eq:optimality:right_continuous_version_required}
\end{align}
holds.
We can safely assume that $\xi'_n(y+)>0$ (in the other case the conclusion of \eqref{eq:ODE_K_N_proof} remains true). Also it is enough to consider the case when $k>\jmath_n(y+\delta)$ for all $\delta>0$ sufficiently small (otherwise we may consider an alternative sequence $(\delta_m)$ where $k=\jmath_n(y+\delta_m)$ for all $m$ and \eqref{eq:optimality:right_continuous_version_required} would follow from  \eqref{eq:optimality:right_continuous_version_1}). 
Consequently, $\xi'_k(y+) \geq \xi'_n(y+)>0$. Then, since by induction hypothesis \eqref{eq:optimality_xi} holds true for $k$, we must have, for almost all $y$ that
\begin{align}
c'_k(\xi_n(y)+) - c'_j(\xi_n(y)+) - K_j(y) = -K_k(y)
\label{eq:optimality:right_continuous_version_2}
\end{align}
Then, combining these results we conclude by \eqref{eq:optimality:right_continuous_version_1} and \eqref{eq:optimality:right_continuous_version_2} that indeed \eqref{eq:optimality:right_continuous_version_required} holds.

%We assume that $\xi_n(y)$ has been already found.
Now we consider the case of non-smoothness of $c^n(\cdot,y)$ at $\xi_n(y)$ and where $y$ is such that the slope of the supporting tangent to $c^n(\cdot,y)$ at $\xi_n(y)$ which crosses the $x$-axis at $y$ does not equal the right-derivative of $c^n(\cdot,y)$ at $\xi_n(y)$.
We show that in this case we have for sufficiently small $\delta>0$,
\begin{align}
\xi_n(y)=\xi_n(y+\delta) \qquad \text{and hence} \qquad \xi'_n(y+)=0,
\label{eq:xi_constant_proof}
\end{align}
which implies that \eqref{eq:ODE_K_N_proof} holds as well.  

To achieve this we place a suitable tangent to $c^n(\cdot,y)$ at $\xi_n(y)$. 
Since, by assumption, $c^n(\cdot,y)$ has a kink at $\xi_n(y)$ we have some flexibility to do that.
Recalling the tangent interpretation of \eqref{eq:definition_xi_n_tangent_interpretation} we know by choice of $\xi_n(y)$ that we can place a supporting tangent to $c^n(\cdot,y)$ at $\xi_n(y)$ which passes through the $x$-axis at $y$. 
Alternatively, by choice of $y$, we can place a tangent to $c^n(\cdot,y)$ at $\xi_n(y)$ which crosses the $x$-axis at some $y+\delta>y$. 
This implies that 
\begin{align}
\argmin_{\zeta \leq y}\frac{c^n(\zeta,y)}{y+\delta-\zeta} = \xi_n(y).
\label{eq:non-unique_tangent}
\end{align}

Assume first that $\xi_n(y)<y$. Denote $k=\imath_n(\xi_n(y)+;y)$. For simplicity of the argument let us also assume that $\xi_n(y)=\xi_k(y) \neq \xi_i(y)$ for all $i \neq k,n$. Also denote $j=\jmath_n(y) = \imath_n(\xi_n(y);y)$.

Now we will use \eqref{eq:non-unique_tangent} to deduce \eqref{eq:xi_constant_proof}.
By continuity and monotonicity of $\xi_n$ we have for $\delta>0$ small enough that $\xi_n(y) \leq \xi_n(y+\delta)<\xi_n(y)+\epsilon<y$ for some $\epsilon = \epsilon(\delta)>0$. By taking $\delta$ small enough we can also assume that $k=\max_{\zeta \leq \xi_n(y)+\epsilon} \imath_n(\zeta;y+\delta)$. 
Then we have
\begin{alignat}{3}
\inf_{\zeta \leq y+\delta } \frac{c^n(\zeta,y+\delta)}{y+\delta-\zeta} 
\geq & && \inf_{\xi_n(y) \leq \zeta < \xi_n(y)+\epsilon} \frac{c^n(\zeta,y)}{y+\delta-\zeta}  %\nonumber \\
     %&+&& 
     +\inf_{\xi_n(y)\leq \zeta \leq \xi_n(y)+\epsilon} \frac{c^n(\zeta,y+\delta) - c^n(\zeta,y)}{y+\delta-\zeta}  \label{eq:proof_xi_constant}
%\geq & && \inf_{\xi_n(y) \leq \zeta < \xi_n(y)+\epsilon} \frac{c^n(\zeta,y)}{y+\delta-\zeta}   
%+ \inf_{\xi_n(y)\leq \zeta \leq \xi_n(y)+\epsilon} l(\zeta,y).
\end{alignat}
As for the first infimum in \eqref{eq:proof_xi_constant} we know from \eqref{eq:non-unique_tangent} that it is attained at $\zeta=\xi_n(y)$. 
Now we will show that the second infimum in \eqref{eq:proof_xi_constant} is also attained at $\zeta = \xi_n(y)$. To this end consider the following estimate for $\zeta > \xi_n(y)$,
\begin{alignat}{3}
	 & \quad && c^n(\zeta,y+\delta) - c^n(\zeta,y) \nonumber \\
=    &-&& c_{\imath_n(\zeta;y+\delta)}(\zeta) +  c_{\imath_n(\zeta;y)}(\zeta) -(y-\zeta) K_{\imath_n(\zeta;y)}(y) + (y+\delta-\zeta) K_{\imath_n(\zeta;y+\delta)}(y+\delta) \nonumber \\
\stackrel{\mathclap{\eqref{eq:definition_xi_n_tangent_interpretation}}}{\geq} &-&& (y-\zeta) K_{\imath_{\imath_n(\zeta;y)}(\zeta;y)}(y) + (y+\delta-\zeta) K_{\imath_n(\zeta;y+\delta)}(y+\delta) \nonumber \\
\geq &-&&(y-\zeta)K_j(y) + (y+\delta-\zeta)K_j(y+\delta) =:l(\zeta,y;\delta).
\label{eq:lower_bound_line-difference_c_n}
\end{alignat}
Since $l(\cdot,y;\delta)$ is non-decreasing and non-negative, we deduce that 
\begin{align*}
\argmin_{\xi_n(y) \leq \zeta < \xi_n(y)+\epsilon} \frac{l(\zeta,y;\delta)}{y+\delta-\zeta} = \xi_n(y).
\end{align*}

Finally, because at $\zeta = \xi_n(y)$ there is equality in \eqref{eq:lower_bound_line-difference_c_n}
%\begin{alignat*}{3}
%& &&c^n(\xi_n(y),y+\delta) - c^n(\xi_n(y),y) \\
%=&-&&(y-\xi_n(y))K_{\imath_n(\xi_n(y);y)}(y) + (y+\delta-\xi_n(y)) K_{\imath_n(\xi_n(y);y+\delta)}(y+\delta) \\
%=&-&&(y-\zeta)K_j(y) + (y+\delta-\zeta)K_j(y+\delta)		
%\end{alignat*}
we can conclude
\begin{align*}
\argmin_{\zeta \leq y+\delta}\frac{c^n(\zeta,y+\delta)}{y+\delta-\zeta} = \xi_n(y)
\end{align*}
as required.

In the case when $\xi_n(y)=y$ we obtain by \eqref{eq:right_continuity_imath} for $\delta>0$ sufficiently small that $\imath_n(y;y+\delta) = \imath_n(y;y)$ and hence
\begin{alignat*}{3}
\argmin_{\zeta < y+\delta}\frac{c^n(\zeta,y+\delta)}{y+\delta-\zeta} =& \quad &&
\argmin_{y \leq \zeta < y+\delta} \left( \underbrace{\frac{c_n(\zeta)-c_{\imath_n(\zeta;y+\delta)}(\zeta)}{y+\delta-\zeta}}_{\geq 0}+\underbrace{ K_{\imath_n(\zeta;y+\delta)}(y+\delta)}_{\geq K_{\imath_n(y;y+\delta)}(y+\delta)} \right) \\
\stackrel{\mathclap{\eqref{eq:extension_c_n}}}{=}& &&\xi_n(y)=y.
\end{alignat*}
The proof is complete.
\end{proof}

\bibliographystyle{plainnat}
%\bibliographystyle{apalike}
% or: plain,unsrt,alpha,abbrv,acm,apalike,ieeetr
%\bibliography{home/spoida/Desktop/literature}
\bibliography{literature_nolinks}

\begin{thebibliography}{21}
\providecommand{\natexlab}[1]{#1}
\providecommand{\url}[1]{\texttt{#1}}
\expandafter\ifx\csname urlstyle\endcsname\relax
  \providecommand{\doi}[1]{doi: #1}\else
  \providecommand{\doi}{doi: \begingroup \urlstyle{rm}\Url}\fi

\bibitem[Az\'ema and Yor(1979)]{AzemaYor1}
J.~Az\'ema and M.~Yor.
\newblock Une solution simple au probl\`eme de {S}korokhod.
\newblock \emph{S\'eminaire de Probabilit\'es XIII: 06, 90-115, LNM 721.},
  1979.

\bibitem[Az\'ema et~al.(1980)Az\'ema, Gundy, and Yor]{Azema1980}
J.~Az\'ema, R.~Gundy, and M.~Yor.
\newblock Sur l'int\'egrabilit\'e uniforme des martingales continues.
\newblock \emph{S\'eminaire de probabilit\'es de Strasbourg}, 14:\penalty0
  53--61, 1980.

\bibitem[Brown et~al.(2001{\natexlab{a}})Brown, Hobson, and
  Rogers]{MAFI:MAFI116}
H.~Brown, D.~G. Hobson, and L.~C.~G. Rogers.
\newblock Robust hedging of barrier options.
\newblock \emph{Mathematical Finance}, 11\penalty0 (3):\penalty0 285--314,
  2001{\natexlab{a}}.

\bibitem[Brown et~al.(2001{\natexlab{b}})Brown, Hobson, and
  Rogers]{Brown98themaximum}
H.~Brown, D.~G. Hobson, and {L.C.G.} Rogers.
\newblock The maximum maximum of a martingale constrained by an intermediate
  law.
\newblock \emph{Probab. Theory Relat. Fields}, 119\penalty0 (4):\penalty0
  558--578, 2001{\natexlab{b}}.

\bibitem[Carraro et~al.(2012)Carraro, El~Karoui, and
  Ob{\l}\'{o}j]{CarraroElKarouiObloj:09}
L.~Carraro, N.~El~Karoui, and J.~Ob{\l}\'{o}j.
\newblock On {A}z{\'e}ma-{Y}or processes, their optimal properties and the
  {B}achelier-{D}rawdown equation.
\newblock \emph{Annals of Probability}, 40\penalty0 (1):\penalty0 372--400,
  2012.

\bibitem[Cox and Ob{\l}\'{o}j(2011{\natexlab{a}})]{CoxObloj:08}
A.~M.~G. Cox and J.~Ob{\l}\'{o}j.
\newblock Robust hedging of double touch barrier options.
\newblock \emph{SIAM Journal on Financial Mathematics}, 2:\penalty0 141--182,
  2011{\natexlab{a}}.

\bibitem[Cox and Ob{\l}\'{o}j(2011{\natexlab{b}})]{CoxObloj:09}
A.~M.~G. Cox and J.~Ob{\l}\'{o}j.
\newblock Robust hedging of double no-touch barrier options.
\newblock \emph{Finance and Stochastics}, 15\penalty0 (3):\penalty0 573--605,
  2011{\natexlab{b}}.

\bibitem[Cox and Wang(2013)]{CoxWang:11}
A.~M.~G. Cox and J.~Wang.
\newblock Root's barrier: Construction, optimality and applications to variance
  options.
\newblock \emph{Annals of Applied Probability}, 23\penalty0 (3):\penalty0
  859--894, March 2013.

\bibitem[Galichon et~al.(2013)Galichon, Henry-Labord\`ere, and Touzi]{Touzi11}
A.~Galichon, P.~Henry-Labord\`ere, and N.~Touzi.
\newblock A stochastic control approach to no-arbitrage bounds given marginals,
  with an application to lookback options.
\newblock \emph{Annals of Applied Probability, to appear}, 2013.

\bibitem[Henry-Labord\`ere et~al.(2013)Henry-Labord\`ere, Ob\l{}\'{o}j, Spoida,
  and Touzi]{Touzi_maxmax}
P.~Henry-Labord\`ere, J.~Ob\l{}\'{o}j, P.~Spoida, and N.~Touzi.
\newblock Maximum maximum of martingales given marginals.
\newblock \emph{arXiv:1203.6877}, 2013.

\bibitem[Hobson(1998)]{Hobson:98b}
D.~G. Hobson.
\newblock Robust hedging of the lookback option.
\newblock \emph{Finance Stoch.}, 2\penalty0 (4):\penalty0 329--347, 1998.

\bibitem[Hobson(2010)]{Hobson2010survey}
D.~G. Hobson.
\newblock The {S}korokhod embedding problem and model-independent bounds for
  option prices.
\newblock \emph{Paris-Princeton Lectures on Mathematical Finance, Springer, LNM
  2003}, 2010.

\bibitem[Hobson and Neuberger(2012)]{HobsonNeuberger:12}
D.~G. Hobson and A.~Neuberger.
\newblock Robust bounds for forward start options.
\newblock \emph{Math. Finance}, 22\penalty0 (1):\penalty0 31--56, 2012.

\bibitem[Madan and Yor(2002)]{MadanYor}
D.~B. Madan and M.~Yor.
\newblock Making markov martingales meet marginals: With explicit
  constructions.
\newblock \emph{Bernoulli}, 8\penalty0 (4):\penalty0 509--536, 2002.

\bibitem[Meyer(1966)]{meyer1966probability}
P.~A. Meyer.
\newblock \emph{Probability and potentials}.
\newblock Blaisdell book in pure and applied mathematics. Blaisdell Pub. Co.,
  1966.

\bibitem[Monroe(1972)]{MR0343354}
I.~Monroe.
\newblock On embedding right continuous martingales in {B}rownian motion.
\newblock \emph{Ann. Math. Statist.}, 43:\penalty0 1293--1311, 1972.

\bibitem[Ob{\l}{\'o}j(2004)]{Obloj:04b}
J.~Ob{\l}{\'o}j.
\newblock The {S}korokhod embedding problem and its offspring.
\newblock \emph{Probab. Surv.}, 1:\penalty0 321--390 (electronic), 2004.

\bibitem[Ob{\l}\'{o}j(2010)]{OblojEQF:10}
J.~Ob{\l}\'{o}j.
\newblock Skorokhod {E}mbedding.
\newblock \emph{Encyclopedia of Quantitative Finance (R.~Cont, ed.)}, pages
  1653--1657, 2010.

\bibitem[Rogers(1989)]{rogers89g}
L.~C.~G. Rogers.
\newblock {A guided tour through excursions}.
\newblock \emph{Bull. London Math. Soc.}, 21:\penalty0 305--341, 1989.

\bibitem[Skorokhod(1965)]{Skorokhod:65}
A.~V. Skorokhod.
\newblock \emph{Studies in the theory of random processes}.
\newblock Translated from the Russian by Scripta Technica, Inc. Addison-Wesley
  Publishing Co., Inc., Reading, Mass., 1965.

\bibitem[Strassen(1965)]{Strassen65}
V.~Strassen.
\newblock The existence of probability measures with given marginals.
\newblock \emph{Ann. Math. Statist.}, 36\penalty0 (2):\penalty0 423--439, 1965.

\end{thebibliography}

\end{document}